\numberwithin{equation}{section}
\appto\normalsize{\belowdisplayshortskip=\belowdisplayskip}
\let\tmp\phi \let\phi\varphi \let\varphi\tmp
\let\tmp\epsilon \let\epsilon\varepsilon \let\varepsilon\tmp
\newcommand{\fra}{(-\Delta)^s}
\newcommand{\R}{\mathbb{R}}
\newcommand{\RR}{\mathbb{R}^N}
\renewcommand{\H}{H^s(\RR)}
\newcommand{\HHH}{\dot{H}^s(\RR)}
\newcommand{\NN}{\mathbb{N}}
\newcommand{\MM}{\mathcal{M}}
\renewcommand{\NN}{\mathcal{N}}
\renewcommand{\SS}{\mathcal{S}}
\theoremstyle{definition}
\newtheorem{definition}{Definition}[section]
\newtheorem{lemma}{Lemma}[section]
\newtheorem{theorem}{Theorem}[section]
\newtheorem{corollary}{Corollary}[theorem]
\newtheorem{remark}{Remark}[section]
\begin{document}
\title[Fractional Schr\"odinger equations]{Fractional Schr\"odinger equations with mixed nonlinearities: asymptotic profiles, uniqueness and nondegeneracy of ground states}

\author{Mousomi Bhakta, Paramananda Das and Debdip Ganguly}
\address{M. Bhakta, Department of Mathematics, Indian Institute of Science Education and Research, Dr. Homi Bhaba Road, Pune-411008, India}
\email{mousomi@iiserpune.ac.in}
\address{P. Das, Department of Mathematics, Indian Institute of Science Education and Research, Dr. Homi Bhaba Road, Pune-411008, India}
\email{paramananda.das@students.iiserpune.ac.in,  pd348225@gmail.com}
\address{D. Ganguly, Department of Mathematics\\
Indian Institute of Technology Delhi\\
IIT Campus, Hauz Khas, New Delhi, Delhi 110016, India}
\email{debdip@maths.iitd.ac.in}

\subjclass[2010]{Primary 35J60, 35B08, 35B20, 35B40, 35B44, 35J10, 35J20}
\keywords{critical, subcritical nonlinearity, blow-up, uniqueness, nondegenracy, asymptotic profile, rate of convergence, fractional Schr\"odinger.}
\date{}

\begin{abstract}
We study the fractional Schr\"odinger equations with a vanishing parameter:
$$
\left\{\begin{aligned}
       \fra u+u &=|u|^{p-2}u+\lambda|u|^{q-2}u \text{ in }\RR \\
     u &\in H^s(\mathbb{R}^N),
                \end{aligned}
  \right.
\leqno{(\mathcal{P}_\lambda)}
$$
where $s\in(0,1)$, $N>2s$, $2<q<p\leq 2^*_s=\frac{2N}{N-2s}$ are fixed parameters and  $\lambda>0$ is a vanishing parameter. We investigate the asymptotic behaviour of positive ground state solutions for $\lambda$ small, when $p$ is subcritical, or critical Sobolev exponent $2^*_s$. For $p<2_s^*$, the ground state solution asymptotically coincides with unique positive ground state solution of $(-\Delta)^s u+u=u^p$, whereas for $p=2_s^*$ the asymptotic behaviour of the solutions, after a rescaling, is given by the unique positive solution of the nonlocal critical Emden-Fowler type equation.  Additionally, for $\lambda>0$ small, we show the uniqueness and nondegeneracy of the positive ground state solution using these asymptotic profiles of solutions.
\end{abstract}

\maketitle
\setcounter{tocdepth}{1}
\tableofcontents

\section{Introduction}
In this article we consider the following fractional Schr\"odinger equations with mixed nonlinearities 
\begin{equation}\tag{$P_\lambda$}\label{PDE}
    \fra u+u=|u|^{p-2}u+\lambda|u|^{q-2}u \quad  \text{ in }\RR,
\end{equation}
where $N>2s$, $2<q<p\leq2_s^\ast=\frac{2N}{N-2s}$ and $\lambda>0$ is a parameter. Here $(-\Delta)^s$ denotes the fractional Laplace operator which can be defined for Schwartz class functions as follows
$$\left(-\Delta\right)^su(x): = a_{N,s}\mbox{P.V}\int_{\mathbb{R}^N}\frac{u(x)-u(y)}{|x-y|^{N+2s}} \; dy,$$
where $P.V$ stands for principal value and $a_{N,s}= \frac{4^s\Gamma(N/2+ s)}{\pi^{N/2}|\Gamma(-s)|}$.

Fractional Klein-Gordon equations and more general versions of fractional Schr\"odinger equations are related to Equation~\eqref{PDE}, namely (Schr\"odinger equations) 
$$
i\frac{\partial\psi}{\partial t}+(-\Delta)^s\psi+(V(x)+\omega)\psi=f(x,\psi),
$$
where $\psi=\psi(x,t)$ is a complex valued function defined on $\mathbb{R}^N\times\mathbb{R}$.  Suppose we assume 
\begin{equation}\label{9-16-1}
f(x, \rho e^{i\theta})=e^{i\theta}f(x,\rho), \quad\forall\, \rho,\, \theta\in\mathbb{R}, \quad x\in\mathbb{R}^N,\end{equation} 
and  $f: \mathbb{R}^N\times\mathbb{R}\to\mathbb{R}$ and $f(x,.)$ is a continuous odd function and $f(x,0)=0.$ Then one can look 
for standing wave solutions, i.e., $\psi(x,t)=e^{i\omega t}u(x)$, which led us to the following scalar field equation 
\begin{equation}\label{9-16-2}
(-\Delta)^s u + V(x)u = f(x,u)\;\;\text{in}\;\mathbb{R}^{N},
\end{equation}
see (\cite{LAS1, LAS2}).  Now consider the following fractional nonlinear Klein-Gordon equation
$$
\psi_{tt}+(-\Delta)^s\psi+(V(x)+\omega^2)\psi=f(x,\psi),
$$
where $\psi:\mathbb{R}^N\times\mathbb{R}\to \mathbb{C}$ and $f$ satisfies \eqref{9-16-1}.  After that, one can look for standing wave solutions as previously, which will again lead to the equation of type \eqref{9-16-2}.

Now, let's go over the current state of the art for the results for \eqref{PDE} in the local case, i.e.,  $s= 1.$  The existence of the ground state solution of \eqref{PDE} has been proven in \cite{ZZ} and in a recent paper \cite{WW} in the local case $s=1$ and $p=2^*=\frac{2N}{N-2}$.  Akahori et al. in \cite{AIIKN} have demonstrated that, for $s=1$ and $p=2^*$ up to a rescaling ground states of \eqref{PDE} 
converges to the unique positive solution of $-\Delta w=|w|^{2^*-2}w$ in $\RR$ as $\lambda\to 0$.  In a recent work, Moroz and Ma \cite{MM1} provided detailed asymptotic profiles of the ground states $u_\lambda$ in various norms and explored how these findings relate to a problem with a mass constraint. For $s=1$ and $p<2^*$, Fukuizumi \cite{F} have studied the asymptotic behavior of the ground states of \eqref{PDE}. Uniqueness of radial ground states for \eqref{PDE} in the case $s=1$ have been proved in \cite{AIIKN}. Besides, Moroz and Muratov \cite{MM2} studied the asymptotic properties of ground states of 
the equation in the form 
\begin{equation}\label{MM-local}
-\Delta u+\epsilon u=|u|^{p-2}u-|u|^{q-2}u \quad\mbox{in }\RR,
\end{equation}
where $\epsilon>0$ is a vanishing parameter and $q>p> 2$ are fixed parameters. They showed  that the
behavior of solutions depends on whether $p$ is less than, equal to or greater than the critical Sobolev exponent $2^*$.

In the nonlocal case $s\in (0,1)$, there has been a considerable interest in studying the existence and asymptotic behaviour of the ground states of fractional Schr\"odinger type equations. In a very recent work, Cassani and Wang \cite{CW1} have studied the behavior of the ground states and local uniqueness for fractional
Schr\"odinger equations with nearly critical growth, namely for the equation
$$(-\Delta)^s u+V(x)u=u^{2^*_s-1-\epsilon}\quad\mbox{in }\RR.$$

In \cite{BM2}, Bhakta and Mukherjee examined the nonlocal version of problem \eqref{MM-local}, where $q>p>2$, $N>2s$, and $-\Delta$ is replaced with $(-\Delta)^s$ in \eqref{MM-local}. They have examined the asymptotic behaviour of ground state solutions when $p$ is subcritical, supercritical, or critical Sobolev exponent. They have also proven the existence and several qualitative properties of positive solutions. Moreover, for the equation $$(-\Delta)^s u=u^{p-1}-\epsilon u^{q-1} \mbox{ in } \Omega, \quad u>0 \mbox{ in } \Omega, \quad u=0 \quad\text{in}\,\, \RR\setminus\Omega,$$ where $q>p>2^*_s$, Bhakta, et al. \cite{BMS} have examined the asymptotic profile of ground states. For more results in fractional Schr\"odinger type equations, we additionally refer to \cite{BCG, FLS, FQT, S} and the references therein. 

Before we start the discussion on \eqref{PDE}, we need to introduce some notations and definitions. Define, 
$$H^s(\mathbb{R}^N): =\bigg\{u\in L^{2}(\mathbb{R}^N) \; : [u]_{\H}^2 := \; \iint_{\mathbb{R}^{2N}}\frac{|u(x)-u(y)|^2}{|x-y|^{N+2s}}\,dx \, dy<\infty\bigg\},
$$
with the Gagliardo norm
$$
\|u\|_{H^{s}(\mathbb{R}{^N})} := \Big(\int_{\mathbb{R}{^N}}|u|^2 \, dx+\iint_{\mathbb{R}^{2N}} \frac{|u(x)-u(y)|^2}{|x-y|^{N+2s}}\,dx\,dy\Big)^{1/2}.$$
\begin{definition}
	We say $u \in H^s(\mathbb{R}^N)$ is a positive weak solution of \eqref{PDE} if $u>0$ in $\mathbb{R}^N$ and for every $\phi\in H^s(\mathbb{R}^N)$, we have
	\begin{eqnarray}
	 \iint_{\mathbb{R}^{2N}}\frac{(u(x) - u(y))(\phi(x)-\phi(y))}{|x-y|^{N+2s}}\,dx\,dy +\int_{\mathbb{R}^N} u\phi \, dx &=& \int_{\mathbb{R}^N}u^{p-1}\phi \, dx+\lambda\int_{\mathbb{R}^N}u^{q-1}\phi \, dx .\nonumber
	\end{eqnarray}
	\end{definition}
The energy functional associated to the above equation is 
\begin{equation}
        I_{\lambda}(u):=\frac12[u]_{\H}^2+\frac12\int_{\RR}|u|^2\,dx-\frac1{p}\int_{\RR}|u|^{p}\,dx-\frac{\lambda}{q}\int_{\RR}|u|^q\,dx.
\end{equation}
It is easy to see that $\{I_{\lambda}(v):v\in\H,v\not\equiv0,I_\lambda'(v)=0\}$ is nonempty. We say $u\in\H$ is a ground state solution of \eqref{PDE} if 
\begin{equation}\label{ground_state}
    I_{\lambda}(u)=m^\lambda:=\inf\{I_{\lambda}(v)\,:\, v\in\H,\, v\not\equiv0,\, I_\lambda'(v)=0\}.
\end{equation}

In the seminal work of Frank, Lenzmann, and Silvestre \cite{ FLS}, the equation \eqref{PDE} for $\lambda =0$ was examined. The authors demonstrated the uniqueness and nondegeneracy of positive ground state solutions $u \in H^{s}(\mathbb{R}^N)$ to the equation
\begin{equation}\label{eqFLS}
(- \Delta)^s u \, + \, u \, = \, |u|^{p-2}u \quad \mbox{in} \ \mathbb{R}^N,
\end{equation}
where 
\begin{equation*}
2 < p < p^{\star}(s, N) = 
\begin{cases}
\frac{N+2s}{N-2s}, \quad \mbox{if} \ 0 < 2s < N,\\
+\infty, \quad \ \ \mbox{if} \ 2s \geq N. 
\end{cases}
\end{equation*}
Among many other results, they showed  any positive solutions $u$ of \eqref{eqFLS} is radially symmetric, strictly decreasing and $u \in H^{2s+1}(\RR) \cap C^{\infty}(\RR)$ and satisfies the decay property 

\begin{equation}
\dfrac{C_1}{1 + |x|^{N+2s}}  \leq u(x) \leq \dfrac{C_2}{1 + |x|^{N+2s}},
\end{equation}
where $C_1$ and $C_2$ are constants depending on $N, p, s.$ We refer to the work of Frank and Lenzmann \cite{FL} for the dimension $N = 1.$ Concerning equation \eqref{eqFLS} for $s =1,$ it is well-known that positive solutions $\mathcal{V}$  are radially symmetric and unique up to translation and satisfies the following asymptotic estimate 
\begin{equation}\label{asymp}
\mathcal{V}(x) |x|^{\frac{(N-1)}{2}} e^{|x|} \rightarrow C, \quad \mbox{as} \ |x| \rightarrow \infty
\end{equation}
for some positive constant $C.$ Despite the apparent fact that \eqref{eqFLS} is the non-local counter of the scalar field equation, the latter case exhibits a number of novel and intriguing phenomena. An important question that remains intriguing is whether, in the non-local scenario, the positive solutions are unique. The uniqueness question has been addressed recently \cite{FW}, but an answer in full generality is still open. 
When $\lambda \neq 0,$ because of the presence of mixed type nonlinearities the solutions of \eqref{PDE} exhibits many new phenomenon. Let us first first demonstrate the existence results concerning \eqref{PDE}.

\begin{theorem}\label{Existence_theorem}
    Let $s\in(0,1),\, N>2s$ and one of the following condition holds:    \begin{enumerate}
        \item $p<2_s^\ast$ and $\lambda>0$; 
        \item $p=2_s^\ast$, $N\geq 4s$,  $q\in(2,2_s^\ast)$ and $\lambda>0$; 
        \item $p=2_s^\ast$, $2s<N<4s$,  $q\in(\frac{4s}{N-2s},2_s^\ast)$ and $\lambda>0$;
            \end{enumerate}
        Then Eq~\eqref{PDE} admits a positive radially symmetric and radially decreasing ground state solution $u_\lambda\in \H\cap C^\infty(\RR)$. 
\end{theorem}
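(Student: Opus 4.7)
The natural strategy is variational: realize positive ground states as mountain-pass critical points of $I_\lambda$, equivalently as minimizers of $I_\lambda$ on the Nehari manifold
\[
\NN_\lambda \,=\, \{u\in\H\setminus\{0\}\,:\,\langle I'_\lambda(u),u\rangle=0\}.
\]
To neutralize translation invariance and to recover compactness for the lower-order nonlinearity $|u|^{q-2}u$, I would restrict to the radial subspace $H^s_{\mathrm{rad}}(\RR)$, invoking the principle of symmetric criticality, and exploit the compact embedding $H^s_{\mathrm{rad}}(\RR)\hookrightarrow L^r(\RR)$ for $2<r<2^*_s$. Equivalently, the P\'olya--Szeg\H{o} inequality for the Gagliardo seminorm $[\cdot]_{\H}$ yields $I_\lambda(u^*)\le I_\lambda(u)$ for the symmetric-decreasing rearrangement, so a minimal-energy critical point can be taken radial and radially decreasing. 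Positivity follows by passing to $|u|$ (same energy and norm) and applying the strong maximum principle for $\fra$; the $C^\infty$ regularity is a standard fractional-elliptic bootstrap from $u\in\H$ and the polynomial right-hand side.

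Mountain pass geometry is immediate: the Sobolev embeddings $\H\hookrightarrow L^q(\RR)\cap L^p(\RR)$ (valid since $2<q<p\le 2^*_s$) give
\[
I_\lambda(u)\,\ge\, \tfrac12\|u\|_{\H}^2 - C\|u\|_{\H}^p - C\lambda\|u\|_{\H}^q,
\]
so $I_\lambda\ge\alpha>0$ on some sphere $\|u\|_{\H}=\rho$, while $I_\lambda(tv)\to-\infty$ as $t\to+\infty$ for any $v\not\equiv 0$ since $p>2$. Denote by $c_\lambda$ the resulting mountain pass level; the superquadratic structure gives $c_\lambda=m^\lambda$ in a standard way.

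In case (1), where $p<2^*_s$, any Palais--Smale sequence is bounded (since $I_\lambda(u_n)-\tfrac{1}{q}\langle I'_\lambda(u_n),u_n\rangle\ge c\|u_n\|_{\H}^2$, using $q<p$), and the compact radial embedding passes the nonlinear terms to the limit, producing a nontrivial critical point at level $c_\lambda>0$ which is the sought ground state. The substantive cases are (2) and (3), where $p=2^*_s$ and compactness in $L^{2^*_s}$ fails; one must show
\[
c_\lambda \,<\, c^* \,:=\, \frac{s}{N}\,S^{N/(2s)},
\]
where $S$ is the sharp fractional Sobolev constant, after which a concentration-compactness/Brezis--Lieb analysis of a PS sequence excludes loss of mass and produces the ground state. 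To establish this strict inequality, I would test along a (possibly cut-off) Aubin--Talenti bubble $\eta U_\epsilon$ with $U_\epsilon(x)=\epsilon^{-(N-2s)/2}U(x/\epsilon)$: then $[U_\epsilon]_{\H}^2=\|U_\epsilon\|_{L^{2^*_s}}^{2^*_s}=S^{N/(2s)}$, $\|U_\epsilon\|_{L^q}^q\sim\epsilon^{N-q(N-2s)/2}$, while $\|\eta U_\epsilon\|_{L^2}^2$ is of order $\epsilon^{2s}$ when $N>4s$, $\epsilon^{2s}|\log\epsilon|$ when $N=4s$, and $\epsilon^{N-2s}$ when $N<4s$. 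Maximizing $I_\lambda(t\eta U_\epsilon)$ in $t\ge 0$, the negative $\lambda$-correction beats the mass-term remainder precisely when $q>\tfrac{4s}{N-2s}$: this is automatic in case (2) (where $N\ge 4s$ forces $\tfrac{4s}{N-2s}\le 2<q$) and is exactly the explicit hypothesis in case (3).

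\textbf{Main obstacle.} The delicate step is the bubble expansion in case (3), $2s<N<4s$: here the mass-term contribution $\|\eta U_\epsilon\|_{L^2}^2\sim\epsilon^{N-2s}$ and the critical-nonlinearity subleading correction both scale as $\epsilon^{N-2s}$, so one must track sub-leading constants carefully to ensure that the $\lambda$-gain $\lambda\,\epsilon^{N-q(N-2s)/2}$ strictly dominates for all small $\lambda$; this is precisely where the hypothesis $q>\tfrac{4s}{N-2s}$ is essential.
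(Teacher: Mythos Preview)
Your proposal is correct and follows the standard variational route. The paper itself does not give a self-contained proof of this theorem: it simply cites \cite{CW} for the subcritical case $p<2_s^\ast$ and \cite{A} for the critical case $p=2_s^\ast$, and then obtains the $C^\infty$ regularity separately in Section~\ref{Prels} via the Schauder bootstrap you allude to. Your sketch is essentially the content of those references: mountain-pass geometry, Nehari minimization, radial compactness (Lemma~\ref{Strauss_compactness_lemma}) for the subcritical terms, and in the critical case the Brezis--Nirenberg-type strict inequality $c_\lambda<\frac{s}{N}S^{N/(2s)}$ via truncated Aubin--Talenti bubbles. Your identification of the role of the threshold $q>\tfrac{4s}{N-2s}$ when $2s<N<4s$ is exactly right; indeed the very same bubble expansions (\eqref{4h1}--\eqref{4h4}) appear later in the paper in Lemma~\ref{l:low-dimension}, though for a different purpose (estimating $m_\lambda$ rather than proving existence). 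So there is no discrepancy: you have reconstructed the argument the cited papers carry out, while the present paper treats the result as known.
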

The proof of the existence of radially symmetric and radially decreasing ground state solution $u_\lambda\in\H$ in Theorem~\ref{Existence_theorem} follows from \cite{A} when $p=2_s^\ast$ and from \cite{CW} when $p<2_s^\ast$. In Section~\ref{Prels}, we  discuss various qualitative properties of any positive weak solution of \eqref{PDE}.

\begin{remark}
The low dimensional phenomenon for the existence is a delicate question. In our forthcoming paper \cite{BDG}, we are working on the existence of ground state in $2s<N<4s$, $p=2_s^\ast$, $q\in(2,\frac{4s}{N-2s})$ and $\lambda>0$ is sufficiently large.
\end{remark}
This paper's main goal is to investigate the limiting behaviour of positive solutions of \eqref{PDE} as $\lambda \rightarrow 0$ and discuss the uniqueness and nondegenaracy issues. Since the problem is non-local in nature, it has lot of technical difficulties apart from some intrinsic characteristics the problem has which we shall describe along the subsequent results. In fact, we provide several novel steps to handle such non-linearities. To this end, let us first recall  the existence of positive solutions of fractional Emden-fowler equation :
\begin{equation}\label{Emden-Fowler}
\fra u=|u|^{2_s^\ast-2}u\quad\text{in }\RR.
\end{equation}
It is well known that all the positive solutions (up to translation and dilation) are given by \begin{equation}\label{Talenti}
U_1(x):=c_{N,s}\left(\frac{1}{1+|x|^2}\right)^{\frac{N-2s}{2}},
\end{equation} see \cite{CT}.
We define $U_\rho$, rescaling of $U_1$, as follows: 
\begin{equation}\label{Urho}
U_{\rho}(x):=\rho^{-\frac{N-2s}2}U_1\left(\frac{x}{\rho}\right), \quad\rho>0.
\end{equation}

Now we state the main theorems of this paper:

\begin{theorem}\label{Asymp_1}
  Let $N>4s,$ $p=2_s^\ast,$ $q\in(2,2_s^\ast)$ and $(u_\lambda)$ be a family of positive radially symmetric and radially decreasing ground state solutions of \eqref{PDE} with $\max_{x}u_\lambda(x)=u_\lambda(0)$. Then for small $\lambda>0$,
  \begin{align}
      &u_\lambda(0)\sim \lambda^{-\frac1{q-2}} \label{asymp-i}.\\
      &[u_\lambda]_{\H}^2\sim\|u_\lambda\|_{2_s^\ast}^{2_s^\ast}\sim1,\,\, \|u_\lambda\|_2^2\sim (2_s^\ast-q)\lambda^{\frac{2_s^\ast-2}{q-2}},\,\,\|u_\lambda\|_q^q\sim \lambda^{\frac{2_s^\ast-q}{q-2}}. \label{asymp-ii}
  \end{align}
  Further, as $\lambda\to0$, the rescaled family of ground states 
  \begin{equation}\label{rescale}
      v_{\lambda}(x)=\lambda^{\frac1{q-2}}u_\lambda\left(\lambda^{\frac{2_s^\ast-2}{2s(q-2)}}x\right),
  \end{equation} converges to $U_{\rho_0}$ in $\H\cap C^{2s-\delta}(\RR)$ where $\delta\in(0,2s)$ and
  \begin{equation}
      \rho_0=\left(\frac{2(2_s^\ast-q)\|U_1\|_q^q}{q(2_s^\ast-2)\|U_1\|_2^2}\right)^{\frac{2_s^\ast-2}{2s(q-2)}}.
  \end{equation}
 Further, the convergence rate is described by the relation \begin{equation}\label{conv_rate}
      [U_{\rho_0}]_{\H}^2-[v_\lambda]_{\H}^2\sim \lambda^\frac{2^*_s-2}{q-2}.
  \end{equation}
If  $2s<N\leq 4s$ and $\frac{4s}{N-2s}<q<2^*_s$ then there exists $\xi_\lambda\in(0,\infty)$ such that  $\xi_\lambda\to 0$ and
$$w_\lambda(x):=\xi_\lambda^\frac{N-2s}{2}v_\lambda(\xi_\lambda x),$$
converges to $U_1$ in $\HHH$. Moreover, the convergence rate is described by the relation
\begin{equation}
\lambda^\sigma\gtrsim  [U_1]^2_{H^s(\RR)}-[v_\lambda]^2_{H^s(\RR)}\gtrsim \begin{cases}
        \lambda^\sigma\left(\ln\frac1{\lambda}\right)^{-\frac{4-q}{q-2}}, \quad &\mbox{ if }\, N=4s\\
        \lambda^{\frac{2(N-2s)}{(N-2s)q-4s}},\quad & \mbox{ if }\,  2s<N<4s,\, q>\frac{4s}{N-2s}.
    \end{cases}
    \end{equation}  
  \end{theorem}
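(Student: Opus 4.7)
The analysis is driven by two integral identities for the ground state $u_\lambda$: the Nehari identity obtained by testing with $u_\lambda$ itself,
\begin{equation*}
[u_\lambda]_{\H}^2+\|u_\lambda\|_2^2 \;=\; \|u_\lambda\|_{2_s^\ast}^{2_s^\ast}+\lambda\|u_\lambda\|_q^q,
\end{equation*}
and the fractional Pohozaev identity, which combined yield the energy decomposition $m^\lambda=\tfrac{s}{N}\|u_\lambda\|_{2_s^\ast}^{2_s^\ast}+\tfrac{q-2}{2q}\lambda\|u_\lambda\|_q^q$ and the algebraic relation $s\|u_\lambda\|_2^2=\tfrac{(2_s^\ast-q)(N-2s)}{2q}\lambda\|u_\lambda\|_q^q$. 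To produce the scales in \eqref{asymp-i}--\eqref{asymp-ii}, I would first show $m^\lambda=\tfrac{s}{N}S^{N/(2s)}+o(1)$ with matching sharper expansion from above of order $\lambda^{(2_s^\ast-2)/(q-2)}$, by inserting the test family $tU_\rho$ into $I_\lambda$ and optimizing in $(t,\rho)$; here $S$ denotes the sharp fractional Sobolev constant. Combined with the Sobolev inequality this forces $[u_\lambda]_\H^2\sim\|u_\lambda\|_{2_s^\ast}^{2_s^\ast}\sim 1$. Inserting the $L^2$--$L^{2_s^\ast}$ interpolation $\|u_\lambda\|_q^q\lesssim\|u_\lambda\|_2^{q\theta}\|u_\lambda\|_{2_s^\ast}^{q(1-\theta)}$, with $\theta=(2_s^\ast-q)/(2_s^\ast-2)$, into the algebraic relation yields $\|u_\lambda\|_2^2\lesssim\lambda^{(2_s^\ast-2)/(q-2)}$; the matching lower bound follows from comparing the sharpened expansion of $m^\lambda$ with the energy decomposition, and then the Pohozaev identity pins down $\|u_\lambda\|_q^q\sim\lambda^{(2_s^\ast-q)/(q-2)}$.

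For the profile convergence, a direct computation shows that $v_\lambda$ defined by \eqref{rescale} satisfies
\begin{equation*}
\fra v_\lambda+B^{2s}v_\lambda \;=\; v_\lambda^{2_s^\ast-1}+B^{2s}v_\lambda^{q-1},\qquad B^{2s}:=\lambda^{(2_s^\ast-2)/(q-2)},
\end{equation*}
so the non-critical terms vanish as $\lambda\to 0$. The seminorm $[v_\lambda]_\H^2$ and the critical norm $\|v_\lambda\|_{2_s^\ast}^{2_s^\ast}$ are scale invariant, hence bounded, while in the regime $N>4s$ the a priori rates above rescale to bounded, strictly positive quantities for $\|v_\lambda\|_2^2$ and $\|v_\lambda\|_q^q$. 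Extract a weak limit $v_\lambda\rightharpoonup v$ in $\H$ along a subsequence; radial monotonicity together with the $L^2$ lower bound excludes vanishing, so $v\not\equiv 0$ and passing to the limit in the PDE gives $\fra v=v^{2_s^\ast-1}$. The classification of positive solutions of \eqref{Emden-Fowler} from \cite{CT} forces $v=U_{\rho_0}$ for some $\rho_0>0$, and $\rho_0$ is pinned down by sending $\lambda\to 0$ in the rescaled Pohozaev identity $\|v_\lambda\|_2^2=\tfrac{2(2_s^\ast-q)}{q(2_s^\ast-2)}\|v_\lambda\|_q^q$ combined with the dilation formula $\|U_\rho\|_r^r=\rho^{N-r(N-2s)/2}\|U_1\|_r^r$. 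Strong convergence in $\H$ then follows from a Brezis--Lieb decomposition applied to the energy identity, and the $C^{2s-\delta}$ convergence from standard fractional elliptic regularity together with the uniform decay estimates of \cite{FLS}; the pointwise rate $u_\lambda(0)=\lambda^{-1/(q-2)}v_\lambda(0)\sim\lambda^{-1/(q-2)}$ is read off from the limit $v_\lambda(0)\to U_{\rho_0}(0)>0$.

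For the rate \eqref{conv_rate}, the plan is to compare the upper expansion of $m^\lambda$ from the first paragraph against the rescaled Nehari identity for $v_\lambda$, in which every term apart from $[v_\lambda]_\H^2-\|v_\lambda\|_{2_s^\ast}^{2_s^\ast}$ is of size exactly $B^{2s}$. Since $U_{\rho_0}$ is extremal for the Sobolev inequality one has $[U_{\rho_0}]_\H^2=\|U_{\rho_0}\|_{2_s^\ast}^{2_s^\ast}$, so $[U_{\rho_0}]_\H^2-[v_\lambda]_\H^2$ decomposes into the gap $\|U_{\rho_0}\|_{2_s^\ast}^{2_s^\ast}-\|v_\lambda\|_{2_s^\ast}^{2_s^\ast}$ plus a term of order $B^{2s}$, and both bounds are made two-sided by inserting them back into the sharpened energy expansion. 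The main technical obstacle is producing matching orders on both sides: this demands a precise expansion of $I_\lambda(tU_{\rho_0}+\text{corrector})$ up to order $\lambda^{(2_s^\ast-2)/(q-2)}$, which requires careful integration against the slowly decaying tail $U_{\rho_0}(x)\sim|x|^{-(N-2s)}$.

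For the low-dimensional case $2s<N\leq 4s$, the preceding framework collapses because $U_1\notin L^2(\RR)$ and $\|v_\lambda\|_2^2$ cannot remain bounded. The remedy is a secondary blow-up $w_\lambda(x)=\xi_\lambda^{(N-2s)/2}v_\lambda(\xi_\lambda x)$ with $\xi_\lambda\to 0$ chosen by an intrinsic normalization (for instance $w_\lambda(0)=1$); then $w_\lambda$ is uniformly bounded in $\HHH$ and converges weakly to $U_1$ by the same classification. The polynomial rate for $2s<N<4s$ and the logarithmic correction at the threshold $N=4s$ both emerge from integrating the tail $U_1(x)\sim|x|^{-(N-2s)}$ against the $L^2$-mass dictated by the rescaled Pohozaev identity; the relevant integral is precisely logarithmically divergent at $N=4s$, producing the factor $(\ln(1/\lambda))^{-(4-q)/(q-2)}$.
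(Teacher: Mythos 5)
Your overall architecture coincides with the paper's: the Nehari and Pohozaev identities and the resulting relations $m^\lambda=\tfrac sN\|u_\lambda\|_{2_s^\ast}^{2_s^\ast}+\tfrac{q-2}{2q}\lambda\|u_\lambda\|_q^q$ and $\|u_\lambda\|_2^2=\tfrac{2(2_s^\ast-q)}{q(2_s^\ast-2)}\lambda\|u_\lambda\|_q^q$; a two‑sided expansion $m_0-m_\lambda\sim\lambda^{\sigma}$, $\sigma=\tfrac{2_s^\ast-2}{q-2}$, obtained by testing with dilations of $U_{\rho_0}$ (with cutoffs when $2s<N\le 4s$, which is exactly where the logarithmic correction at $N=4s$ and the rate $\lambda^{2(N-2s)/((N-2s)q-4s)}$ come from); the rescaling to $v_\lambda$; a concentration–compactness/Brezis--Lieb identification of the limiting bubble; elliptic regularity for the $C^{2s-\delta}$ convergence; and a secondary blow‑up in low dimensions. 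Two small remarks: for $N>4s$ no corrector in the test function is needed (plain dilations of $U_{\rho_0}\in L^2\cap L^q$ already give the sharp upper bound, and the rate \eqref{conv_rate} is then immediate from $m_\lambda=\tfrac sN[v_\lambda]_{\H}^2$ and $m_0=\tfrac sN[U_{\rho_0}]_{\H}^2$), and nonvanishing of the weak limit is most cleanly obtained from $\|v_\lambda\|_q^q\sim 1$ together with the compact radial embedding into $L^q$, rather than from the $L^2$ bound.

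The one step that fails as written is the identification of $\rho_0$, on which the $L^2$ (hence the full $\H$) convergence depends. You propose to ``send $\lambda\to0$ in the rescaled Pohozaev identity $\|v_\lambda\|_2^2=\tfrac{2(2_s^\ast-q)}{q(2_s^\ast-2)}\|v_\lambda\|_q^q$.'' The right‑hand side passes to the limit by Strauss compactness, but the left‑hand side does not: $L^2$ is an endpoint exponent and weak convergence in $\H$ only gives Fatou's inequality $\|U_\rho\|_2^2\le\liminf_{\lambda\to0}\|v_\lambda\|_2^2$. Since $\|U_\rho\|_2^2=\rho^{2s}\|U_1\|_2^2$ and $\|U_\rho\|_q^q=\rho^{N-q(N-2s)/2}\|U_1\|_q^q$, this yields only $\rho\le\rho_0$, not $\rho=\rho_0$. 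The paper devotes a separate step to this point (a secondary normalization $w_\lambda(x)=M_\lambda^{-1}v_\lambda(M_\lambda^{-2/(N-2s)}x)$ with $M_\lambda=v_\lambda(0)/U_{\rho_0}(0)$). Within your own framework the gap can be closed by using the sharp side of your energy expansion: combining
\begin{equation*}
m_0\le m_\lambda+\lambda^{\sigma}\bigl(1+o(1)\bigr)\Bigl(\tfrac1q\|v_\lambda\|_q^q-\tfrac12\|v_\lambda\|_2^2\Bigr)
\qquad\text{and}\qquad
m_\lambda\le m_0-\lambda^{\sigma}\bigl(g_0(\rho_0)+o(1)\bigr),
\end{equation*}
where $g_0(\rho):=\tfrac1q\|U_\rho\|_q^q-\tfrac12\|U_\rho\|_2^2$ is uniquely maximized at $\rho_0$, gives $\liminf\bigl(\tfrac1q\|v_\lambda\|_q^q-\tfrac12\|v_\lambda\|_2^2\bigr)\ge g_0(\rho_0)\ge g_0(\rho)$; after substituting the Pohozaev relation and the $L^q$ limit this forces $\|U_\rho\|_2^2\ge\tfrac{2(2_s^\ast-q)}{q(2_s^\ast-2)}\|U_\rho\|_q^q$, i.e.\ $\rho\ge\rho_0$, and together with the Fatou direction pins $\rho=\rho_0$ and upgrades the $L^q$ convergence to $L^2$ convergence. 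You should make this (or an equivalent) argument explicit; as stated, the limit passage in the Pohozaev identity is unjustified.
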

  
  \begin{remark}
It should be noted that we are unable to determine the precise convergence rate of the rescaled ground states when $2s < N \leq 4s$. As the Green's function of the operator $((-\Delta)^s + \lambda I)$ with $\lambda >0$ has polynomial decay in the non-local case (see \cite[Appendix~C]{FLS}), whereas it has exponential decay in the local case, which facilitates the estimation of some terms.

  \end{remark}

\begin{remark}
Note that since any ground state solution $u_\lambda$ of \eqref{PDE} is of the form $u_\lambda(.)=u_\lambda(|.-x_\lambda|)$, for some $x_\lambda\in\RR$ and \eqref{PDE} is invariant under translation , w.l.g. we can assume $\max_{x}u_\lambda(x)=u_\lambda(0)$.
\end{remark}

For the subcritical case $p<2_s^\ast$, we prove that 
\begin{theorem}\label{Asymp_2}
    Let $N>2s,\,2<q<p<2_s^\ast$ and $(u_\lambda)$ be a family of positive radially symmetric and radially decreasing ground state solutions of \eqref{PDE} with $\max_{x}u_\lambda(x)=u_\lambda(0)$. Then for $\lambda$ small,
    \begin{align*}
    \|u_0\|_p^p-\|u_\lambda\|_p^p&\sim\lambda.\\
    \|u_0\|_2^2-\|u_\lambda\|_2^2&\sim\lambda.\\
    [u_0]_{\H}^2-[u_\lambda]_{\H}^2&\sim\lambda.
\end{align*}
Moreover, $u_\lambda$ converges to the unique positive, radially symmetric and radially decreasing ground state solution $u_0$ of 
\begin{equation}
\label{subcri-uni-sol}\fra u +u=|u|^{p-2}u
\end{equation} in $\H\cap C^{2s-\delta}(\RR)$ where $\delta\in(0,2s)$.
\end{theorem}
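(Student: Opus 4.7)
My plan is to adapt the Nehari--level comparison scheme used by Moroz--Muratov \cite{MM2} in the local case and by Bhakta--Mukherjee \cite{BM2} in the nonlocal supercritical setting, anchoring the argument in the uniqueness of positive ground states of \eqref{subcri-uni-sol} from \cite{FLS, FL}. Set $\NN_\lambda = \{u \in \H \setminus \{0\} : \langle I_\lambda'(u), u\rangle = 0\}$; since for any $u \neq 0$ the fibering map $t \mapsto I_\lambda(tu)$ has a unique strict maximum at some $t_\lambda(u) > 0$ with $t_\lambda(u)u \in \NN_\lambda$, one has $m^\lambda = \inf_{\NN_\lambda} I_\lambda$ and, via the Nehari identity, the representation $m^\lambda = \tfrac{p-2}{2p}\|u_\lambda\|_p^p + \lambda\tfrac{q-2}{2q}\|u_\lambda\|_q^q$ (and $m^0 = \tfrac{p-2}{2p}\|u_0\|_p^p$).

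First I would establish the sharp two--sided energy expansion $m^\lambda = m^0 - \tfrac{\lambda}{q}\|u_0\|_q^q + o(\lambda)$ by two cross--projection arguments. For the upper bound, project $u_0$ onto $\NN_\lambda$ as $t_\lambda u_0$, where $1 = t_\lambda^{p-2} + \lambda t_\lambda^{q-2}\|u_0\|_q^q/\|u_0\|_p^p$ forces $t_\lambda = 1 + O(\lambda)$. Since $\tfrac{d}{dt}I_\lambda(tu_0)\big|_{t=1} = -\lambda\|u_0\|_q^q = O(\lambda)$, a Taylor expansion around $t=1$ gives $m^\lambda \le I_\lambda(t_\lambda u_0) = I_\lambda(u_0) + O(\lambda^2) = m^0 - \tfrac{\lambda}{q}\|u_0\|_q^q + O(\lambda^2)$. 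For the matching lower bound, project $u_\lambda$ onto $\NN_0$ as $s_\lambda u_\lambda$; the Nehari identity for $u_\lambda$ makes $s_\lambda^{p-2} = 1 + \lambda\|u_\lambda\|_q^q/\|u_\lambda\|_p^p = 1 + O(\lambda)$, and the analogous expansion produces $m^0 \le I_0(s_\lambda u_\lambda) = I_0(u_\lambda) + O(\lambda^2) = m^\lambda + \tfrac{\lambda}{q}\|u_\lambda\|_q^q + O(\lambda^2)$.

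The upper estimate already bounds $(u_\lambda)$ in $\H$ through the Nehari identity. Extracting a weak limit $u_\lambda \rightharpoonup u^{\ast}$ in $\H$ and using the fractional radial compact embedding $H^s_{\mathrm{rad}}(\RR) \hookrightarrow L^r(\RR)$ for $r \in (2, 2_s^\ast)$ on the radially decreasing sequence yields strong convergence in $L^p \cap L^q$; passing to the limit in the weak formulation shows $u^{\ast}$ solves \eqref{subcri-uni-sol}, and non--triviality is secured by $I_0(u^{\ast}) = \lim m^\lambda = m^0 > 0$. The uniqueness result of \cite{FLS, FL} pins down $u^{\ast} = u_0$ independently of the subsequence, and strong $\H$ convergence then follows from $\|u_\lambda\|_{\H}^2 = \|u_\lambda\|_p^p + \lambda\|u_\lambda\|_q^q \to \|u_0\|_p^p = \|u_0\|_{\H}^2$ combined with the weak convergence. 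To extract the sharp rates, I would feed $\|u_\lambda\|_q^q \to \|u_0\|_q^q > 0$ back into the energy identity $m^\lambda - m^0 = \tfrac{p-2}{2p}(\|u_\lambda\|_p^p - \|u_0\|_p^p) + \lambda\tfrac{q-2}{2q}\|u_\lambda\|_q^q$, which forces $\|u_0\|_p^p - \|u_\lambda\|_p^p \sim \lambda$; substituting into the Nehari identity together with the Pohozaev identity $\tfrac{N-2s}{2}[u_\lambda]_{\H}^2 + \tfrac{N}{2}\|u_\lambda\|_2^2 = \tfrac{N}{p}\|u_\lambda\|_p^p + \tfrac{N\lambda}{q}\|u_\lambda\|_q^q$ (and analogously for $u_0$) solves the resulting $2\times 2$ linear system to give $\|u_0\|_2^2 - \|u_\lambda\|_2^2 \sim \lambda$ and $[u_0]_{\H}^2 - [u_\lambda]_{\H}^2 \sim \lambda$.

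Finally, for the $C^{2s-\delta}$ convergence I would set $w_\lambda = u_\lambda - u_0$, satisfying $\fra w_\lambda + w_\lambda = (|u_\lambda|^{p-2}u_\lambda - |u_0|^{p-2}u_0) + \lambda u_\lambda^{q-1}$, and combine a uniform--in--$\lambda$ $L^\infty$ bound from De~Giorgi--Moser iteration with uniform pointwise decay $u_\lambda(x) \lesssim (1+|x|)^{-(N+2s)}$ derived from the Green's function of $\fra + \mathrm{Id}$ (cf.\ \cite[Appendix~C]{FLS}); interior Schauder estimates for $\fra$ together with $L^\infty$ decay of the right--hand side then upgrade strong $L^r$ convergence to $C^{2s-\delta}(\RR)$. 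The main obstacle I anticipate is precisely this \emph{uniform}--in--$\lambda$ polynomial decay: unlike the local case where exponential decay is immediate from barriers, in the nonlocal setting one must control the polynomial Green's function of $\fra + \mathrm{Id}$ with constants that remain bounded as $\lambda \to 0$, which is what prevents escape of mass and allows the passage from local to global $C^{2s-\delta}$ convergence.
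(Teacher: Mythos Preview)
Your proposal is correct and follows essentially the same cross--projection/Nehari--Pohozaev scheme as the paper, with two minor differences worth noting.

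First, the paper parametrizes the comparison paths by \emph{dilation} $u_t(x)=u(x/t)$ rather than by the fibering $t\mapsto tu$ you use; since the supremum of $t\mapsto I_\lambda(u_t)$ is attained exactly on the Pohozaev manifold, this makes the link to the identity $\tfrac{1}{2_s^\ast}[u_\lambda]^2_{\H}+\tfrac12\|u_\lambda\|_2^2=\tfrac1p\|u_\lambda\|_p^p+\tfrac{\lambda}{q}\|u_\lambda\|_q^q$ slightly more direct, but your fibering argument leads to the same two--sided estimate $m^0-m^\lambda\sim\lambda$ and the same $2\times2$ linear system in $[u_\lambda]_{\H}^2$ and $\|u_\lambda\|_2^2$.

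Second---and this is the only place where your plan does more work than necessary---the $C^{2s-\delta}(\RR)$ convergence does \emph{not} require the uniform polynomial decay $u_\lambda(x)\lesssim(1+|x|)^{-(N+2s)}$ that you flag as the main obstacle. The paper's route (its Step~2 in the critical case, reused verbatim here) is: a uniform--in--$\lambda$ $L^\infty$ bound from Moser iteration, plus strong $\H$ convergence and interpolation, give $u_\lambda\to u_0$ in $L^r(\RR)$ for every $r\ge2$; writing $\fra(u_\lambda-u_0)=-(u_\lambda-u_0)+\big(u_\lambda^{p-1}-u_0^{p-1}\big)+\lambda u_\lambda^{q-1}$ then shows $\|\fra(u_\lambda-u_0)\|_{L^r(\RR)}\to0$ for any fixed large $r>N/2s$, and the global Schauder estimate $\|w\|_{C^{2s-N/r}(\RR)}\lesssim\|\fra w\|_{L^r(\RR)}$ from \cite[Theorem~1.6(iii)]{RS3} finishes. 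No barrier/Green's function analysis is needed, so the ``main obstacle'' you anticipate does not actually arise.
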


In general, the uniqueness of ground state solution to \eqref{PDE} for all $\lambda>0$ is not expected. Indeed, in the case $s=1$ and $p<2^*$, D\'avila, del Pino and Guerra \cite{DDG} constructed multiple positive solutions to \eqref{PDE} for a sufficiently large $\lambda$. Also, in the same paper they have exhibited numerical simulation  suggesting the nonuniqueness in the critical case $p=2^*$. Wei and Wu \cite{WW} recently proved that there exist
two positive solutions to \eqref{PDE} when $s=1$, $N=3$, $p=2^*$, $q\in(2, 4)$ and $\lambda>0$ is sufficiently large, as suggested in \cite{DDG}. Also see \cite{WW1}, where existence and qualitative properties of normalized solutions were established for equations with the critical Sobolev and mixed nonlinearities. 

Here we prove uniqueness and nondegenracy of ground state to \eqref{PDE} when $\lambda>0$ is sufficiently small in the nonlocal case  $s\in (0,1)$ (both in the cases when $p$ is critical and subcritical). For the local case $s=1$, uniqueness and nondegenracy of ground states for $\lambda>0$ sufficiently small has been proven by Akahori, et al. in \cite{AIIKN}. 

\begin{theorem}\label{t:uni-cri}
	Let $2+\frac{4s}{N}<q<p=2_s^\ast$ and $N>4s$. Then there exists $\lambda_0>0$ such that for any $\lambda\in(0,\lambda_0)$, the positive ground state to \eqref{PDE} is unique. Moreover, the unique positive ground state  solution is nondegenerate in $H^s_{rad}(\RR)$.
\end{theorem}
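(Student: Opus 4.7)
The strategy is the standard two-step route: first establish nondegeneracy of $u_\lambda$ in $H^s_{rad}(\RR)$ for $\lambda$ small, and then deduce uniqueness by a normalized-difference argument based on that nondegeneracy.

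\textbf{Step 1 (Nondegeneracy).} Argue by contradiction. Suppose along some sequence $\lambda_n\to 0^+$ there exist $\phi_n\in H^s_{rad}(\RR)$ with $\|\phi_n\|_{\H}=1$ and $L_{\lambda_n}\phi_n=0$, where
\[
L_\lambda := \fra + 1 - (2_s^*-1)u_\lambda^{2_s^*-2} - \lambda(q-1)u_\lambda^{q-2}.
\]
Following the rescaling in Theorem~\ref{Asymp_1}, set $\mu_n:=\lambda_n^{(2_s^*-2)/(2s(q-2))}$ and $\tilde\phi_n(x):=\mu_n^{(N-2s)/2}\phi_n(\mu_n x)$, which preserves the $\HHH$-norm. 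A direct change of variables turns $L_{\lambda_n}\phi_n=0$ into
\[
\fra \tilde\phi_n - (2_s^*-1)v_{\lambda_n}^{2_s^*-2}\tilde\phi_n
  \;=\; \mu_n^{2s}\bigl[(q-1)v_{\lambda_n}^{q-2} - 1\bigr]\tilde\phi_n,
\]
with $v_{\lambda_n}\to U_{\rho_0}$ in $\H\cap C^{2s-\delta}$ by Theorem~\ref{Asymp_1}. Using radial compactness of $H^s_{rad}$ in $L^r_{loc}$ and uniform decay estimates for $\phi_n$ (obtained from Green's-function bounds for $\fra+I$, together with the pointwise control on $u_\lambda$), I can pass to the limit and obtain a nonzero $\tilde\phi_0\in \dot H^s_{rad}$ solving the linearized critical equation
\[
\fra \tilde\phi_0 \;=\; (2_s^*-1)U_{\rho_0}^{2_s^*-2}\tilde\phi_0.
\]
By the radial nondegeneracy of $U_{\rho_0}$ for the fractional critical problem (Dávila--del Pino--Sire), $\tilde\phi_0=\beta\,\Lambda U_{\rho_0}$ with $\Lambda U_{\rho_0}=\frac{d}{d\rho}U_\rho\big|_{\rho=\rho_0}$.

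\textbf{Killing $\beta$.} Testing the equation $L_\lambda\phi_\lambda=0$ against the Pohozaev multiplier $\Lambda u_\lambda= \tfrac{N-2s}{2}u_\lambda+x\cdot\nabla u_\lambda$, integrating by parts, and using the equation satisfied by $u_\lambda$, every principal term cancels except a residual proportional to
\[
\lambda^{\frac{2_s^*-2}{q-2}}\cdot \beta\cdot\Bigl(\tfrac{d}{d\rho}\,\|U_\rho\|_q^q\Bigr)\Big|_{\rho=\rho_0} \;+\; o\!\left(\lambda^{\frac{2_s^*-2}{q-2}}\right),
\]
once the scaling relations \eqref{asymp-ii} are inserted. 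The hypothesis $q>2+\tfrac{4s}{N}$ is exactly what makes this $\rho$-derivative nonzero (it forces the scaling exponent of $\|U_\rho\|_q^q$ to be nonvanishing), so $\beta=0$ and $\tilde\phi_0\equiv 0$. A bootstrap on the rescaled equation and the uniform decay estimates then contradict $\|\tilde\phi_n\|_{\HHH}=\|\phi_n\|_{\HHH}\ge c_0>0$.

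\textbf{Step 2 (Uniqueness).} Suppose there are two radial positive ground states $u_n^{(1)}\neq u_n^{(2)}$ for some $\lambda_n\to 0$. Let $\eta_n:=u_n^{(1)}-u_n^{(2)}$ and $\hat\eta_n:=\eta_n/\|\eta_n\|_{\H}$. Writing the difference of the two equations yields
\[
\fra\hat\eta_n+\hat\eta_n=A_n(x)\hat\eta_n+\lambda_n B_n(x)\hat\eta_n,
\]
with $A_n,B_n$ the integrated mean values arising from the Lipschitz character of $t\mapsto t^{2_s^*-1}$ and $t\mapsto t^{q-1}$. Since $u_n^{(1)}$ and $u_n^{(2)}$ have the same asymptotic profile by Theorem~\ref{Asymp_1}, the coefficients $A_n+\lambda_n B_n$ converge to $(2_s^*-1)u_{\lambda_n}^{2_s^*-2}+\lambda_n(q-1)u_{\lambda_n}^{q-2}$ in the same sense as used above. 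Consequently $\hat\eta_n$ falls in the same asymptotic setting as $\phi_n$ in Step~1, and Step~1 applies verbatim to force $\hat\eta_n\to 0$ strongly in $H^s$, contradicting $\|\hat\eta_n\|_{\H}=1$. Equivalently, one may package Step~1 as the statement that $L_{\lambda}$ is uniformly coercive on $H^s_{rad}$ for $\lambda<\lambda_0$ and deduce uniqueness directly.

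\textbf{Main obstacle.} The delicate point is Step~1: extracting the correct Pohozaev-type orthogonality that isolates the coefficient $\beta$. Because the Green's function of $\fra+I$ only has polynomial decay (as flagged in the remark following Theorem~\ref{Asymp_1}), uniform tail estimates for $\phi_n$ must be proved carefully in order to justify integration by parts and limit-passage against the multiplier $\Lambda u_\lambda$, and one must verify that $\frac{d}{d\rho}\|U_\rho\|_q^q\big|_{\rho=\rho_0}\neq 0$ under the precise hypothesis $q>2+\tfrac{4s}{N}$. Once these two nonlocal analogues of the Akahori--Ibrahim--Ikoma--Kikuchi--Nakanishi identities are in place, the rest of the proof is a standard contradiction/continuation scheme.
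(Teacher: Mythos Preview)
Your overall architecture---contradiction, rescale via Theorem~\ref{Asymp_1}, pass to the linearized critical problem, invoke the radial nondegeneracy of $U_{\rho_0}$, kill the scaling coefficient, and rule out mass escaping to infinity---is the same as the paper's. However, two of your technical claims are off, and one of them is exactly where the hypothesis $q>2+\tfrac{4s}{N}$ actually enters.

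\textbf{Where $q>2+\tfrac{4s}{N}$ is used.} Your assertion that this hypothesis is ``exactly what makes $\tfrac{d}{d\rho}\|U_\rho\|_q^q\big|_{\rho_0}$ nonzero'' is incorrect: since $\|U_\rho\|_q^q=\rho^{\,N-\frac{q(N-2s)}{2}}\|U_1\|_q^q$, the scaling exponent vanishes only at $q=2_s^\ast$, so the $\rho$-derivative is nonzero for \emph{every} $q\in(2,2_s^\ast)$. In the paper, killing the scaling coefficient uses \emph{two} tests---the Pohozaev multiplier $x\cdot\nabla v_{\lambda_n}$ \emph{and} $v_{\lambda_n}$ itself---which combine to give, in the limit,
\[
\int_{\RR} U_{\rho_0}\,\tilde\psi\,dx=\frac{2_s^\ast-q}{2_s^\ast-2}\int_{\RR} U_{\rho_0}^{\,q-1}\,\tilde\psi\,dx,
\]
and an explicit Beta-function computation (Lemma~\ref{App_lemma_1}) shows this forces $q=2$ unless the coefficient vanishes. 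No restriction beyond $2<q<2_s^\ast$ is needed here. The hypothesis $q>2+\tfrac{4s}{N}$ is used \emph{only} in the tail estimate: after a Kelvin transform, the subcritical potential contributes a term $|x|^{-4s}c_\lambda^2(x/|x|^2)$ near the origin, and one needs it in $L^\gamma(B_r)$ for some $\gamma>\tfrac{N}{2s}$ to run Moser iteration; the computation $\tfrac{N}{2s}<\tfrac{N}{4s-(q-2)N/2}$ is equivalent to $q>2+\tfrac{4s}{N}$.

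\textbf{The tail estimate itself.} Your proposed route---Green's-function bounds for $(-\Delta)^s+I$ together with pointwise control on $u_\lambda$---is problematic. Before rescaling, $u_\lambda(0)\sim\lambda^{-1/(q-2)}$, so the potential $(2_s^\ast-1)u_\lambda^{2_s^\ast-2}$ is not uniformly bounded. After rescaling, the equation for $\tilde\phi_n$ has mass term $\lambda_n^\sigma\to 0$, so the Green's-function decay of $(-\Delta)^s+\lambda_n^\sigma$ degenerates and gives no uniform control. The paper instead normalizes in $L^\infty$ (not $H^s$), passes to the Kelvin transform of $|\theta_\lambda|$, and applies a Jin--Li--Xiong Moser iteration to obtain $|\theta_\lambda(x)|\le C|x|^{-(N-2s)}$ uniformly for $|x|$ large; this directly contradicts $\theta_\lambda(y_\lambda)=1$ with $|y_\lambda|\to\infty$. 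You should replace your Green's-function step with this Kelvin-transform argument, and note that it is precisely here that the lower bound on $q$ is consumed.
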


\begin{theorem}\label{t:uni-sub}
	Let  $2<q<p<2_s^\ast$ and  $N>2s$.  Then there exists $\lambda_0>0$ such that for any $\lambda\in(0,\lambda_0)$, the positive ground state to \eqref{PDE} is unique.  Moreover, the unique positive ground state  solution is nondegenerate in $H^s_{rad}(\RR)$.
\end{theorem}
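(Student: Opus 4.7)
The plan is to combine the asymptotic convergence $u_\lambda\to u_0$ in $\H\cap C^{2s-\delta}(\RR)$ from Theorem~\ref{Asymp_2} with the Frank--Lenzmann--Silvestre nondegeneracy \cite{FLS} of the unique positive radial ground state $u_0$ of \eqref{subcri-uni-sol}: the linearized operator $L_0\phi:=\fra\phi+\phi-(p-1)u_0^{p-2}\phi$ has kernel $\operatorname{span}\{\partial_{x_1}u_0,\dots,\partial_{x_N}u_0\}$, which intersects $H^s_{rad}(\RR)$ only in $\{0\}$. Since every positive ground state of \eqref{PDE} is radial about its maximum (by the remark preceding Theorem~\ref{Asymp_2}), I work throughout in $H^s_{rad}(\RR)$ and handle both nondegeneracy and uniqueness by a single contradiction--compactness scheme.

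For nondegeneracy, set $L_\lambda\phi:=\fra\phi+\phi-(p-1)u_\lambda^{p-2}\phi-\lambda(q-1)u_\lambda^{q-2}\phi$, and suppose toward contradiction that there exist $\lambda_n\downarrow 0$ and $\phi_n\in H^s_{rad}(\RR)$ with $\|\phi_n\|_\H=1$ and $L_{\lambda_n}\phi_n=0$. Extract a weak limit $\phi_n\rightharpoonup\phi_0$ in $H^s_{rad}(\RR)$. Using the $C^{2s-\delta}$ convergence $u_{\lambda_n}\to u_0$, the uniform polynomial decay $u_{\lambda_n}(x)\lesssim(1+|x|)^{-(N+2s)}$ (from Green-kernel comparison for $\fra+I$, as in \cite[Appendix~C]{FLS}), the compact embedding $H^s_{rad}(\RR)\hookrightarrow L^r(\RR)$ for $r\in(2,2_s^\ast)$, and the fact that the $\lambda_n$-term vanishes in $\HH$, I pass to the limit in $L_{\lambda_n}\phi_n=0$ to obtain $L_0\phi_0=0$. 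Nondegeneracy of $L_0$ on $H^s_{rad}$ then forces $\phi_0\equiv 0$. Testing $L_{\lambda_n}\phi_n=0$ against $\phi_n$ gives
\begin{equation*}
    1=\|\phi_n\|_\H^2=(p-1)\int_\RR u_{\lambda_n}^{p-2}\phi_n^2\,dx+\lambda_n(q-1)\int_\RR u_{\lambda_n}^{q-2}\phi_n^2\,dx.
\end{equation*}
Splitting the first integral into a ball $B_R$ (where radial $L^2_{loc}$-compactness $\phi_n\to 0$ kills the contribution) and its complement (where $u_{\lambda_n}^{p-2}(x)\to 0$ uniformly, absorbed against $\|\phi_n\|_2^2\le 1$) and handling the second term analogously, I force both integrals to $0$, the desired contradiction.

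For uniqueness, if two positive radial ground states $u_{\lambda_n}^{(1)}\neq u_{\lambda_n}^{(2)}$ existed along some $\lambda_n\downarrow 0$, set $\tilde w_n:=(u_{\lambda_n}^{(1)}-u_{\lambda_n}^{(2)})/\|u_{\lambda_n}^{(1)}-u_{\lambda_n}^{(2)}\|_\H$. A mean-value linearization of the nonlinearity yields $\fra\tilde w_n+\tilde w_n=a_n(x)\tilde w_n+\lambda_n b_n(x)\tilde w_n$, with $a_n(x)=(p-1)\int_0^1(\tau u_{\lambda_n}^{(1)}+(1-\tau)u_{\lambda_n}^{(2)})^{p-2}\,d\tau$ converging to $(p-1)u_0^{p-2}$ with the same uniform polynomial tail (since both $u_{\lambda_n}^{(j)}\to u_0$ by Theorem~\ref{Asymp_2}), and the argument of the previous paragraph applies verbatim. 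The main technical obstacle is to secure the uniform-in-$\lambda$ pointwise decay of the family $u_\lambda$ (and hence of $u_\lambda^{p-2}$) that drives the tightness in the contradiction argument: unlike in the local case \cite{AIIKN}, where exponential decay is at hand, here only polynomial decay is available. Subcriticality $p<2_s^\ast$ rules out concentration, and a Green-kernel comparison as in \cite{FLS} supplies the uniform tail estimate required to close the argument.
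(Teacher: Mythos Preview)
Your contradiction--compactness scheme is correct and essentially transplants the paper's proof of the \emph{critical} result (Theorem~\ref{t:uni-cri}) to the subcritical setting; the ingredients you list (Theorem~\ref{Asymp_2}, the Frank--Lenzmann--Silvestre radial nondegeneracy, Strauss compactness, and a uniform tail bound on $u_\lambda$, which indeed follows from the $C^{2s-\delta}$ convergence to $u_0$ together with the decay of $u_0$) are sufficient to close both arguments. The paper, however, takes a different and much shorter route here: it sets $\mathcal{X}=L^2_{rad}(\RR)\cap L^p_{rad}(\RR)$, defines $F(v,\lambda)=v-\big((-\Delta)^s+1\big)^{-1}\big(|v|^{p-2}v+\lambda|v|^{q-2}v\big)$, notes that $F(u_0,0)=0$ and that $\partial_vF(u_0,0)=I+K$ with $K$ compact, and uses the FLS nondegeneracy to conclude that $\partial_vF(u_0,0)$ is invertible on the radial sector; the implicit function theorem then delivers a unique nondegenerate branch $\lambda\mapsto u_\lambda$ near $(u_0,0)$, and Theorem~\ref{Asymp_2} upgrades this to uniqueness among \emph{all} ground states. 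The advantage of the paper's approach is economy: because $p<2_s^\ast$, the limit problem has no scaling invariance and $\big((-\Delta)^s+1\big)^{-1}$ already provides the needed compactness, so the implicit function theorem handles uniqueness and nondegeneracy simultaneously without any tail analysis. Your approach, by contrast, is more hands-on but more robust---it is precisely what is required in the critical case, where the limiting Emden--Fowler equation has a one-parameter family of solutions and no such shortcut exists.
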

The rest of the paper is organized as follows. Section~\ref{Prels} deals with the preliminaries and we discuss some qualitative properties of the solutions of \eqref{PDE}. Section~\ref{critical} and \ref{subcritical} are devoted to the proof of Theorem~\ref{Asymp_1} and Theorem~\ref{Asymp_2} respectively. In Section~\ref{loc_uniq}, we give the proof of Theorem~\ref{t:uni-cri} and Theorem~\ref{t:uni-sub} respectively.

\vspace{5mm}

{\bf Notations:} Throughout this paper, we denote by $C$ the  generic positive constant which may vary from line to line.  For $\lambda\gg1$ and $f(\lambda),\, g(\lambda) \geq 0,$ whenever there exists $\lambda_0>0$ such that for every $0<\lambda \leq \lambda_0$ the respective
condition holds:
\begin{itemize}
	\item
	$f(\lambda)\lesssim g(\lambda)$ if there exists $C>0$ independent of $\lambda$ such that $f(\lambda) \leq Cg(\lambda);$
	\item
	$f(\lambda)\sim g(\lambda)$ if $f(\lambda) \lesssim g(\lambda)$ and $g(\lambda) \lesssim f(\lambda);$
	\item
	$f(\lambda) \simeq g(\lambda)$ if $f(\lambda) \sim g(\lambda)$ and $\lim_{\lambda \to 0}\frac{f(\lambda)}{g(\lambda)}=1.$
\end{itemize}
We denote by $\|.\|_p$, the $L^p$ norm in $\RR$.
We also use the standard notations $f=O(g)$ and $f=o(g),$ where $f \geq 0,\, g \geq 0$.

\section{Preliminaries}\label{Prels}
%\noindent\textbf{Regularity results} 
If $u_\lambda$ is any positive weak solution of \eqref{PDE} with $2<q<p\leq2_s^\ast$, then following the arguments as in 
 \cite[Proposition~5.1.1]{DMV} yields  $u_\lambda\in L^\infty(\RR)$. Before proving some qualitative properties of $u_\lambda$, we first recall a lemma.
\begin{lemma}[Lemma C.2, \cite{FLS}]\label{FLS_theorem}
     Let $N > 1$, $s\in(0,1)$, and suppose that $V\in L^\infty(\RR)$ with $V (x)\to 0$ as
$|x|\to\infty$. Assume that $u\in L^2(\RR)$ with $\|u\|_2 = 1$ satisfies $\fra u + V u = Eu$ with
some $E < 0$. Furthermore, let $0 < \beta < -E$ be given and suppose that $R > 0$ is such that
$V (x) +\beta\geq 0$ for $|x| \geq R$. Then the following properties hold:
\begin{enumerate}
    \item $|u(x)|\lesssim (1+|x|)^{-(N+2s)}$,
    \item $u(x)=-c\beta^{-2}\left(\int_{\RR}Vu\,dx\right)|x|^{-(N+2s)}+o(^{-(N+2s)})$ as $|x|\to\infty$
\end{enumerate}
where $c>0$ is a positive constant depending only on $N$ as $s.$
\end{lemma}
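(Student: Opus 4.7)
The plan is to realize $u$ via a Green's function convolution identity and then bootstrap its pointwise decay until it saturates at the intrinsic rate $(1+|x|)^{-(N+2s)}$ of the kernel. Set $\alpha := -E > 0$, and let $G_\alpha$ denote the fundamental solution of $\fra + \alpha I$ on $\RR$. The starting identity is
\begin{equation*}
u(x) \;=\; -\int_{\RR} G_\alpha(x-y)\, V(y)\, u(y)\, dy,
\end{equation*}
which is justified since $Vu \in L^2(\RR)$ and the Fourier symbol $|\xi|^{2s}+\alpha$ is bounded below by $\alpha$. I would invoke the standard facts about $G_\alpha$ recorded in \cite[Appendix~C]{FLS}: positivity, integrability, $G_\alpha(x) \sim C|x|^{-(N-2s)}$ as $|x|\to 0$, and the sharp asymptotic
\begin{equation*}
G_\alpha(x) \;=\; c\, \alpha^{-2}\, |x|^{-(N+2s)} \,+\, o(|x|^{-(N+2s)}) \quad \text{as } |x|\to \infty.
\end{equation*}

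\textbf{Step 1 (pointwise decay).} Using $V(x)\to 0$, enlarge $R$ if necessary so that $|V(y)| \leq \beta$ for $|y|\geq R$, and split
\begin{equation*}
|u(x)| \;\leq\; \|V\|_\infty \int_{|y|<R} G_\alpha(x-y)|u(y)|\,dy \;+\; \beta \int_{|y|\geq R} G_\alpha(x-y)|u(y)|\,dy.
\end{equation*}
The first summand is $O(|x|^{-(N+2s)})$ because $u \in L^2(B_R)$ and $G_\alpha$ decays at that rate uniformly away from the origin. The second is handled by a bootstrap: if $|u(y)| \lesssim (1+|y|)^{-\sigma}$ for some $0 \leq \sigma < N+2s$, a classical convolution estimate $G_\alpha * \langle\cdot\rangle^{-\sigma} \lesssim \langle\cdot\rangle^{-\min(\sigma+2s,\, N+2s)}$ (with a logarithmic endpoint correction) improves the decay by at least $2s$ per iteration, so finitely many iterations from $\sigma = 0$ terminate at $|u(x)| \lesssim (1+|x|)^{-(N+2s)}$, proving (1).

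\textbf{Step 2 (sharp asymptotic).} The decay from Step~1 gives $Vu \in L^1(\RR)$. For large $|x|$, split the convolution at $|y| = |x|/2$. On $\{|y| \leq |x|/2\}$ one has $|x-y| \geq |x|/2$, and the uniform expansion
\begin{equation*}
G_\alpha(x-y) \;=\; c\,\alpha^{-2}\,|x|^{-(N+2s)} \,+\, o(|x|^{-(N+2s)})
\end{equation*}
integrates against $Vu$ to produce the leading term $-c\,\alpha^{-2}|x|^{-(N+2s)} \int_{\RR} V u\,dy + o(|x|^{-(N+2s)})$, after observing that the tail $\{|y| > |x|/2\}$ contributes $o(1)$ to $\int Vu$. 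On $\{|y| > |x|/2\}$, the decay of $Vu$ inherited from $u$ (enhanced by $V(y)\to 0$) together with $G_\alpha \in L^1(\RR)$ shows this remainder is $o(|x|^{-(N+2s)})$. This yields (2); the constant is naturally $c\,|E|^{-2}$ from the sharp asymptotic of $G_\alpha$, which I understand to be the intended reading of $c\beta^{-2}$ in the statement.

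\textbf{Main obstacle.} The delicate point is ensuring that the bootstrap in Step~1 terminates at the sharp rate $N+2s$ rather than stalling at a slower rate dictated by the decay of $V$ itself; a naive convolution estimate loses a logarithm at the endpoint. The cleanest implementation is to recast $u = Tu$ as a fixed-point equation on the weighted Banach space $\{w : \sup_{x\in\RR} (1+|x|)^{N+2s}|w(x)| < \infty\}$ with $T(w)(x) := -\int G_\alpha(x-y) V(y) w(y)\,dy$, and verify that enlarging $R$ turns the exterior piece of $T$ into a strict contraction (using $\beta < \alpha$ and quantitative bounds on $\int G_\alpha(x-y)(1+|y|)^{-(N+2s)}\,dy$), while the interior piece contributes an $O((1+|x|)^{-(N+2s)})$ term automatically. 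This closes the sharp decay estimate (1), after which the asymptotic (2) is just the first-order Taylor expansion of $G_\alpha$ at infinity against the now-integrable measure $V u\,dy$.
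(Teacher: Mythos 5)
First, a point of reference: the paper does not prove this lemma at all — it is quoted verbatim (typos included) from [FLS, Lemma C.2] and used as a black box, so there is no in‑paper argument to compare against. Measured against the original Frank–Lenzmann–Silvestre proof, your overall strategy — represent $u$ through the resolvent kernel, import positivity, integrability and the sharp $|x|^{-(N+2s)}$ asymptotics of the kernel from [FLS, Appendix C], run a weighted bootstrap for (1) and split the convolution at $|y|=|x|/2$ for (2) — is the standard one and essentially the intended one.

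Two substantive comments. (a) The convolution estimate you invoke in Step 1, $G_\alpha * \langle\cdot\rangle^{-\sigma}\lesssim \langle\cdot\rangle^{-\min(\sigma+2s,\,N+2s)}$, is false: convolution with a fixed integrable kernel never improves polynomial decay (already $G_\alpha*1=\|G_\alpha\|_{1}$ is a nonzero constant, and for $0<\sigma<N$ the region $|x-y|\le 1$ alone contributes $\gtrsim \langle x\rangle^{-\sigma}$). So the iteration as written stalls at $\sigma=0$; the gain must come from the smallness of $V$ near infinity, not from the kernel. You correctly diagnose and repair this in your final paragraph — the contraction on $\{w:\sup_x(1+|x|)^{N+2s}|w(x)|<\infty\}$ with the exterior piece made small by enlarging $R$ — and that repaired argument (plus a preliminary $L^2\to L^\infty$ step via Young's inequality, needed since $G_\alpha\notin L^2$ when $N\ge 4s$, and a word on why the fixed point coincides with $u$) is the actual proof; Step 1 as literally written is not. (b) The constant. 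You obtain $c|E|^{-2}\int_{\RR}Vu\,dx$ and declare this the "intended reading" of $c\beta^{-2}$. Your version is in fact the only self-consistent one: the left-hand side of (2) cannot depend on the auxiliary parameter $\beta\in(0,-E)$, and if one instead takes the resolvent at $\beta$ the source becomes $(V-E-\beta)u$, whose piece $(-E-\beta)u$ decays only at the rate $|x|^{-(N+2s)}$ of the kernel itself, so the far region in Step 2 is no longer $o(|x|^{-(N+2s)})$ and the naive splitting breaks — this is precisely why the resolvent must be taken at $|E|$. You should state explicitly that you are proving a corrected form of the displayed asymptotics; this is harmless for the present paper, which only uses the sign of the leading coefficient.
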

\begin{theorem}
    If $u$ is a positive weak solution of \eqref{PDE} then $u\in C^\infty(\RR).$ Further, $$u(x)=C|x|^{-(N+2s)}+o(|x|^{-(N+2s)})$$ as $|x|\to\infty.$
\end{theorem}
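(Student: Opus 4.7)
Since $u\in L^\infty(\RR)$ has been established just above, writing the equation as $(-\Delta)^s u = f$ with $f := u^{p-1}+\lambda u^{q-1}-u\in L^\infty(\RR)$, I would invoke the standard interior regularity for the fractional Laplacian (Silvestre-type estimates) to obtain $u\in C^{0,\alpha}(\RR)$ for some $\alpha\in(0,1)$; translation invariance makes this bound global. Because $u>0$ pointwise, around each $x_0$ the nonlinearity $t\mapsto t^{p-1}+\lambda t^{q-1}$ is $C^\infty$ in a neighborhood of $u(x_0)>0$, so $f$ locally inherits the regularity of $u$. Iterating Schauder-type estimates for $(-\Delta)^s$ then upgrades $u$ to every Hölder class on each ball, so $u\in C^\infty(\RR)$.

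\textbf{Linearization and verification of $u(x)\to 0$.} I would recast the equation in linear Schr\"odinger form
\begin{equation*}
(-\Delta)^s u + V u = E u,\qquad V(x):=-u(x)^{p-2}-\lambda u(x)^{q-2},\qquad E:=-1,
\end{equation*}
so that Lemma~\ref{FLS_theorem} can be applied to $\tilde u:=u/\|u\|_2$ (harmless by linearity). The hypotheses $V\in L^\infty(\RR)$ and $\|\tilde u\|_2=1$ are immediate; the only nontrivial one, $V(x)\to 0$ as $|x|\to\infty$, reduces to $u(x)\to 0$. For this I use that $u$ is globally uniformly continuous on $\RR$ (from the previous step) and lies in $L^2(\RR)$: if $u(x_n)>\epsilon$ along some $|x_n|\to\infty$, uniform continuity forces $u>\epsilon/2$ on balls $B(x_n,\delta)$ of a fixed radius, and passing to a disjoint subfamily contradicts $\|u\|_2<\infty$.

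\textbf{Conclusion.} Fix any $\beta\in(0,1)$ and choose $R>0$ with $|V|\le\beta$ on $\{|x|\ge R\}$. Part (1) of Lemma~\ref{FLS_theorem} applied to $\tilde u$ gives $|u(x)|\lesssim (1+|x|)^{-(N+2s)}$; part (2), after multiplying by $\|u\|_2$ and using $\int_{\RR} Vu\,dx = -\int_{\RR}(u^{p-1}+\lambda u^{q-1})\,dx$, yields
\begin{equation*}
u(x)=C|x|^{-(N+2s)}+o\bigl(|x|^{-(N+2s)}\bigr),\qquad C:=c\beta^{-2}\int_{\RR}\bigl(u^{p-1}+\lambda u^{q-1}\bigr)\,dx>0.
\end{equation*}
The main obstacle is the intermediate step $u(x)\to 0$: in the nonlocal setting an $H^s$ function need not decay pointwise, so the argument is forced to combine the global uniform Hölder regularity from the smoothness step with the $L^2$-integrability of $u$; once that is in hand, Lemma~\ref{FLS_theorem} produces both the polynomial bound and the sharp asymptotic.
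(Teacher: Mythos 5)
Your proposal is correct and follows essentially the same route as the paper: bootstrap regularity via Schauder estimates for $(-\Delta)^s$ (using positivity of $u$ to keep the nonlinearity smooth along the iteration), deduce $u(x)\to 0$ from global H\"older continuity together with $u\in L^2(\RR)$, and then apply Lemma~\ref{FLS_theorem} to $\tilde u=u/\|u\|_2$ with $V=-u^{p-2}-\lambda u^{q-2}$ to get both the polynomial bound and the sharp asymptotic with $C=c\beta^{-2}\int_{\RR}(u^{p-1}+\lambda u^{q-1})\,dx>0$. You in fact spell out two details the paper leaves implicit (why the composition $t\mapsto t^{p-1}$ preserves regularity, and why $C^{\alpha}\cap L^2$ forces pointwise decay), so the argument matches.
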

\begin{proof}
  $u\in L^\infty(\RR)$ implies $u^{p-1}+u^{q-1}-u=f(u)\in L^\infty(\RR).$ Therefore, applying the Scha\"uder estimate \cite[Theorem 1.1(a)]{RS2} we get $u\in C^{2s}(B_{1/2}(0))$ when $s\neq\frac12$ and $u\in C^{2s-\epsilon}(B_{1/2}(0))$ when $s=\frac12$.
Moreover, since the equation is invariant under translation, translating the equation we obtain  $u\in C^{2s}(\RR)$ when $s\neq\frac12$ and $u\in C^{2s-\epsilon}(\RR)$ when $s=\frac12$.
 for every $\epsilon>0.$  Thus applying \cite[Theorem 1.1(b)]{RS2} we get $u\in C^{2s+\alpha}_{\text{loc}}(\RR)$ 
for some $\alpha\in(0,1).$ Next it can be shown exactly as in \cite[Proposition 1]{BM1} that $u$ is a classical 
solution of \eqref{PDE}. Further, as 
$u\in C^{2s+\alpha}_{\text{loc}}
(\RR)$, repeatedly applying 
Scha\"uder estimate as in \cite[Corollary 
2.4]{RS1} we get $u\in 
C^{\infty}(\RR).$ Moreover, $u\in 
C^{\alpha}(\RR)\cap L^2(\RR)$ implies
$u(x)\to 0$ as $|x|\to\infty$. To 
prove the asymptotic estimate of 
$u$ as $|x|\to\infty$, we use the 
Lemma~\ref{FLS_theorem}. Note that 
$\Tilde{u}:=\frac{u}{\|u\|_2}$ 
satisfies $$\fra 
\Tilde{u}+V\Tilde{u}=-\Tilde{u},$$ 
where  $$V(x)=-\|u\|_2^{p-
2}\Tilde{u}^{p-2}(x)-\lambda 
\|u\|_2^{q-2}\Tilde{u}^{q-2}
(x)=-u^{p-2}(x)-\lambda u^{q-2}
(x).$$ Since $u\in L^{\infty}(\RR)$, $V\in L^{\infty}(\RR)$. Further, $V(x)\to0$ as $|x|\to\infty$. Let $0<\beta<1$, there exists $R>0$ such that $V(x)+\beta\geq0$ for all $|x|\geq R.$ Therefore, by Lemma \ref{FLS_theorem}(1), $$\Tilde{u}(x)\leq C_1|x|^{-(N+2s)}.$$
Moreover, $$-c\beta^{-2}\int_{\RR}V(x)\Tilde{u}(x)\,dx=\frac{c}{\beta^2\|u\|_2}\int_{\RR}(u^{p-1}(x)+\lambda u^{q-1}(x))\,dx>0.$$
Thus by Lemma \ref{FLS_theorem}(2), $u(x)=C|x|^{-(N+2s)}+o(|x|^{-(N+2s)})$ as $|x|\to\infty.$ 
\end{proof}

\begin{lemma}\label{l:rad}
Let $u\in H^s(\RR)$ be a positive weak solution of \eqref{PDE}.	
Then $u$ is radially symmetric and strictly decreasing about some point in $\RR$.
\end{lemma}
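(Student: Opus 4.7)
The plan is to apply the method of moving planes for the fractional Laplacian in its direct form (as in Chen--Li--Li, Felmer--Quaas--Tan), taking full advantage of the smoothness and polynomial decay $u(x)=C|x|^{-(N+2s)}+o(|x|^{-(N+2s)})$ established in the previous theorem. Fix a unit direction $e_1$ and for each $\mu\in\R$ set $\Sigma_\mu=\{x\in\RR:x_1<\mu\}$, $x^\mu=(2\mu-x_1,x_2,\dots,x_N)$, $u_\mu(x)=u(x^\mu)$, and $w_\mu=u_\mu-u$. Then $w_\mu$ is antisymmetric across $T_\mu=\partial\Sigma_\mu$, and subtracting the equations for $u$ and $u_\mu$ yields
\[
\fra w_\mu+w_\mu=c_\mu(x)\,w_\mu \quad\text{in }\RR,\qquad c_\mu(x)=\int_0^1\bigl[(p-1)\widetilde u^{\,p-2}+\lambda(q-1)\widetilde u^{\,q-2}\bigr]\,dt,
\]
with $\widetilde u=tu_\mu+(1-t)u$. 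Because $p,q>2$ and $u\to 0$ at infinity, $c_\mu\geq 0$ is bounded and $c_\mu(x)\to 0$ as $|x|\to\infty$.

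For the starting step I would show $w_\mu\geq 0$ in $\Sigma_\mu$ for all $\mu$ sufficiently negative. If this fails, the decay of $u$ and $u_\mu$ forces $w_\mu$ to attain a strictly negative global minimum at some $x_0\in\Sigma_\mu$; splitting $\fra w_\mu(x_0)$ into integrals over $\Sigma_\mu$ and its reflection and using $|x_0-y|\leq|x_0-y^\mu|$ for $y\in\Sigma_\mu$ together with the antisymmetry of $w_\mu$ yields $\fra w_\mu(x_0)<0$. Inserting this into the equation gives $(c_\mu(x_0)-1)w_\mu(x_0)<0$, hence $c_\mu(x_0)>1$; but for $\mu\ll 0$ any $x_0\in\Sigma_\mu$ satisfies $|x_0|\geq|\mu|$, so $u(x_0),u_\mu(x_0)$ become arbitrarily small, forcing $c_\mu(x_0)\ll 1$, a contradiction.

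Next I would slide the plane upward: set $\mu_0=\sup\{\mu:w_\nu\geq 0\text{ in }\Sigma_\nu\text{ for every }\nu\leq\mu\}$. The starting step gives $\mu_0>-\infty$, and $\mu_0<+\infty$ because $u$ is a positive $L^2$ function. I would then claim $w_{\mu_0}\equiv 0$ on $\RR$: otherwise a strong maximum principle for antisymmetric solutions of $\fra w+(1-c_{\mu_0})w=0$ gives $w_{\mu_0}>0$ strictly in $\Sigma_{\mu_0}$, and combining a narrow-region principle in a thin strip near $T_{\mu_0}$ (where $1-c_{\mu_0}$ is coercive) with the same decay-at-infinity argument used in the starting step allows sliding to $\mu_0+\epsilon$, contradicting the maximality of $\mu_0$. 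Hence $u$ is symmetric across $T_{\mu_0}$. Performing the same argument in every direction $e\in S^{N-1}$ produces a hyperplane of symmetry in each direction; the intersection of $N$ linearly independent such planes is a single point $x_\ast$ about which $u$ must be radial, and strict monotonicity in the radial variable follows from the strict inequality $w_\mu>0$ in $\Sigma_\mu$ for $\mu$ slightly less than $\mu_0$.

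The main obstacle is a clean justification of the maximum and narrow-region principles for antisymmetric solutions on the unbounded set $\Sigma_\mu$, since the classical local boundary-to-interior arguments are unavailable in the nonlocal setting. The polynomial decay $u(x)=O(|x|^{-(N+2s)})$ supplied by the previous theorem is precisely what guarantees that the negative infimum of $w_\mu$ is attained, and the $+u$ term in \eqref{PDE} is what ensures that $1-c_\mu$ is positive and bounded away from $0$ at infinity, powering both the starting step and the sliding step.
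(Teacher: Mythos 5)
Your proposal is correct and takes essentially the route the paper itself relies on: the paper's proof of this lemma is a one-line citation to \cite[Theorem~1.2]{BM2}, which carries out exactly this direct moving-plane argument for the fractional Laplacian with a mass term. One small point worth tightening: in the starting step, the bound $|x_0|\ge|\mu|$ only makes $u(x_0)$ small, not a priori $u_\mu(x_0)$ (the reflected point $x_0^\mu$ may be near the origin); what saves the argument is that at a negative minimum one has $u_\mu(x_0)<u(x_0)$, so $c_\mu(x_0)$ is controlled by $u(x_0)$ alone.
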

\begin{proof}
Following the arguments as in \cite[Theorem 1.2]{BM2}, proof of this theorem follows. 
\end{proof}

\begin{theorem}({\bf Pohozaev identity})[Theorem A.1, \cite{BM1}]\label{Pohozaev_theorem}
    Let $u\in \HHH\cap L^{\infty}(\RR)$ be a positive solution of $$\fra u=f(u)\quad\text{in }\RR,$$ and $F(u)=\int_0^uf(t)\,dt\in L^1(\RR)$. Then $$(N-2s)\int_{\RR}uf(u)\,dx=2N\int_{\RR}F(u)\,dx.$$
\end{theorem}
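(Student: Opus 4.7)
\medskip

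The plan is to prove the Pohozaev identity via the Caffarelli--Silvestre $s$-harmonic extension. First I would let $\tilde u : \R^{N+1}_+ \to \R$ be the extension of $u$, so that $\tilde u$ solves
\begin{equation*}
\operatorname{div}(y^{1-2s}\nabla \tilde u)=0 \;\; \text{in } \R^{N+1}_+, \qquad \tilde u(x,0)=u(x),
\end{equation*}
with the trace relation $-\kappa_s \lim_{y\to 0^+} y^{1-2s}\partial_y \tilde u = (-\Delta)^s u = f(u)$. This rewrites the nonlocal equation as a degenerate elliptic PDE on the half space, where classical Pohozaev-type arguments apply.

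Next, following the Ros-Oton--Serra / Chang--Gonz\'alez style argument (and as executed in \cite{BM1}), I would multiply the extension equation by the dilation vector field $Z\cdot\nabla \tilde u$, where $Z=(x,y)\in\R^{N+1}_+$, and integrate over the half ball $B_R^+=\{Z\in\R^{N+1}_+:|Z|<R\}$. An integration by parts produces three groups of terms: a bulk term proportional to $\int_{B_R^+} y^{1-2s}|\nabla\tilde u|^2$ (which arises with coefficient $\frac{N-2s}{2}$ after using the scaling identity $Z\cdot\nabla(|\nabla\tilde u|^2)$ with the weight $y^{1-2s}$), a spherical boundary term on $\partial^+B_R^+$, and a bottom boundary term on $\{y=0\}\cap B_R$. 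Using the trace relation, the bottom term becomes $\int_{B_R\cap\R^N} f(u)\,(x\cdot\nabla u)\,dx$. A further integration by parts via the identity $\operatorname{div}(xF(u))=NF(u)+x\cdot\nabla u\,f(u)$ turns this into $-N\int_{\R^N}F(u)\,dx$ (modulo the boundary term on $\partial B_R$), while the bulk term, after passing to the trace, equals $\frac{N-2s}{2}\int_{\R^N} uf(u)\,dx$. Matching the two gives exactly $(N-2s)\int uf(u)=2N\int F(u)$.

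The main obstacle is justifying that the spherical boundary contributions on $\partial^+B_R^+$ and on $\partial B_R\cap\R^N$ vanish as $R\to\infty$. These involve the weighted quantity $y^{1-2s}|\nabla \tilde u|^2$ together with $F(u)$ and $uf(u)$. Rather than showing pointwise decay, I would argue by an averaging/Fubini argument: since $u\in \dot H^s(\R^N)$ translates into $\int_{\R^{N+1}_+} y^{1-2s}|\nabla\tilde u|^2\,dZ<\infty$ (up to the constant $\kappa_s$), and since $F(u),\,uf(u)\in L^1(\R^N)$ (the latter because $u\in L^\infty$ and $F(u)\in L^1$ control $f(u)u$ via the hypotheses), the radial integrals $R\mapsto \int_{\partial^+ B_R^+} y^{1-2s}|\nabla\tilde u|^2$ and $R\mapsto R\int_{\partial B_R\cap\R^N}(|F(u)|+|uf(u)|)$ are integrable in $R$ (after a change of variables), and so there exists a sequence $R_k\to\infty$ along which all boundary terms tend to zero. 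Passing to the limit along this sequence yields the identity. The only other technical point is verifying the $L^\infty$ bound and the required regularity of $\tilde u$ up to $\{y=0\}$ so that the integration by parts is legal on $B_R^+$; this is standard given $u\in L^\infty\cap \dot H^s$ and $f(u)\in L^\infty$, which we already have from the equation and the regularity results established earlier in the section.
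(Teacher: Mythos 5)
The paper does not actually prove this statement itself --- it quotes it verbatim from [BM1, Theorem~A.1] --- and the proof given there is precisely the Caffarelli--Silvestre extension argument you outline: multiply the degenerate equation $\operatorname{div}(y^{1-2s}\nabla\tilde u)=0$ by $(x,y)\cdot\nabla\tilde u$, integrate by parts over half-balls $B_R^+$, convert the bottom term via $\operatorname{div}(xF(u))=NF(u)+f(u)\,x\cdot\nabla u$, and discard the spherical boundary contributions along a sequence $R_k\to\infty$ chosen by the integrability/averaging argument. Your proposal is therefore correct and essentially identical to the cited proof; the points you defer as standard (regularity of $\tilde u$ up to $\{y=0\}$, and the identity $\kappa_s\int_{\mathbb{R}^{N+1}_+}y^{1-2s}|\nabla\tilde u|^2\,dZ=\int_{\RR}uf(u)\,dx$ obtained by testing the equation with $u$) are handled in the same way there.
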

\begin{corollary}\label{Pohozaev_corollary}
    Let $u$ be a positive solution of \eqref{PDE}, then 
    \begin{equation*}
        (N-2s)[u]_{\HHH}^2=2N\left(\frac1{p}\int_{\RR}|u|^{p}\,dx+\frac{\lambda}{q}\int_{\RR}|u|^q\,dx-\frac12\int_{\RR}|u|^2\,dx\right).
    \end{equation*}
\end{corollary}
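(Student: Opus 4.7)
The corollary is a direct application of the fractional Pohozaev identity (Theorem~\ref{Pohozaev_theorem}) to Equation~\eqref{PDE}, combined with testing the equation against $u$ itself. The plan is to recast \eqref{PDE} in the form required by Theorem~\ref{Pohozaev_theorem}, check the integrability hypothesis, and then collect the resulting identities.

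First I would move the linear term to the right side, rewriting \eqref{PDE} as $\fra u = f(u)$ with
\[
f(u) = u^{p-1} + \lambda u^{q-1} - u, \qquad F(u) = \int_0^u f(t)\,dt = \frac{u^p}{p} + \frac{\lambda u^q}{q} - \frac{u^2}{2}.
\]
To invoke Theorem~\ref{Pohozaev_theorem} I must verify that $u \in \HHH \cap L^\infty(\RR)$ and that $F(u) \in L^1(\RR)$. The $L^\infty$ bound is furnished by the regularity result preceding this corollary (in particular $u \in C^\infty(\RR)$ via the bootstrap argument using \cite{RS2}); membership in $\HHH$ is immediate from $u \in \H$. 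For $F(u) \in L^1$, the $u^2$-term is integrable because $u \in L^2$, the $u^q$-term because $q \in (2, 2^\ast_s)$ and $H^s(\RR) \hookrightarrow L^q(\RR)$, and the $u^p$-term either by the same Sobolev embedding when $p < 2_s^\ast$ or by $u \in L^{2_s^\ast}$ when $p = 2_s^\ast$.

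With the hypotheses in place, Theorem~\ref{Pohozaev_theorem} yields
\[
(N-2s)\int_{\RR} u\,f(u)\,dx = 2N\int_{\RR} F(u)\,dx = 2N\!\left(\frac{1}{p}\|u\|_p^p + \frac{\lambda}{q}\|u\|_q^q - \frac12 \|u\|_2^2\right).
\]
To finish, I rewrite the left-hand side using $u$ as a test function in the weak formulation of \eqref{PDE}: testing gives $[u]_{\H}^2 + \|u\|_2^2 = \|u\|_p^p + \lambda \|u\|_q^q$, so that
\[
\int_{\RR} u\,f(u)\,dx = \|u\|_p^p + \lambda\|u\|_q^q - \|u\|_2^2 = [u]_{\HHH}^2,
\]
after identifying the Gagliardo seminorm with the homogeneous $\dot H^s$-seminorm via the standard constant. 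Substituting this identity into the Pohozaev equality produces exactly the claimed formula.

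There is no real obstacle here, since the heavy lifting is done by Theorem~\ref{Pohozaev_theorem}; the only mild point to take care of is the verification of $F(u) \in L^1$, which requires invoking Sobolev embeddings and the $L^\infty$-regularity established earlier in the same section. Once those are in hand, the identity follows by straightforward algebraic simplification.
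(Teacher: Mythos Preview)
Your proof is correct and is exactly the argument the paper leaves implicit (the corollary is stated without proof, as an immediate consequence of Theorem~\ref{Pohozaev_theorem}). One small remark: your aside about ``identifying the Gagliardo seminorm with the homogeneous $\dot H^s$-seminorm via the standard constant'' is unnecessary here, since in the paper's conventions $[u]_{\HHH}$ and $[u]_{\H}$ denote the same Gagliardo seminorm, and the weak formulation (Definition~1.1) is already written in terms of it, so no normalizing constant enters.
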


\noindent\textbf{Palais-Smale decomposition:}
We denote by $J_0(u)$ the energy functional corresponding to the critical Emden-Fowler equation 
\begin{equation}\label{limit_equation}
    \fra u= |u|^{2_s^\ast-2}u.
\end{equation}
Therefore, $J_0=\frac12[u]_{\H}^2-\frac1{2_s^\ast}\|u\|_{2_s^\ast}^{2_s^\ast}$. Next, we recall the Palais-Smale decomposition result for $J_0$.
\begin{theorem}[Theorem 1.1, \cite{PP2}]\label{PS_theorem_2}
    Let $w_\lambda$ be a PS sequence for $J_0$ in $\HHH$. Then there exists a solution $W\in\HHH$ to \eqref{limit_equation} such that for a subsequence, still denoted by $w_\lambda$, $w_\lambda\rightharpoonup W$. Moreover either the convergence is strong, or there exists a natural number $k$ and nontrivial solutions $w^{(j)}\in\HHH$ to \eqref{limit_equation} and sequences $R_{\lambda}^{(j)}\in(0,\infty)$, $x^{(j)}\in\RR$ for $1\leq j\leq k$ such that 
    \begin{equation*}
        \left|\log\left(\frac{R_{\lambda}^{(i)}}{R_{\lambda}^{(j)}}\right)\right|+\left|\frac{x_{\lambda}^{(i)}-x_{\lambda}^{(j)}}{R_{\lambda}^{(i)}}\right|\to\infty
\quad\text{as }\lambda\to 0,\quad\text{for }i\neq j
    \end{equation*}
    and 
    \begin{equation*}
    \begin{aligned}
w_{\lambda}(x)=W(x)+\sum_{j=1}^k (R_{\lambda}^{(j)})^{-\frac{N-2s}2}w^{(j)}\left(\frac{x-x_{\lambda}^{(j)}}{R_{\lambda}^{(j)}}\right)+o(1)\quad\text{in }\HHH\\
\|w_{\lambda}\|_{\HHH}^2=\|W\|_{\HHH}^2+\sum_{j=1}^k \|w^{(j)}\|_{\HHH}^2+o(1)\quad\text{as }\lambda\to 0.
        \end{aligned}
    \end{equation*}
    Further, \[
J_0(w_\lambda)=J_0(W)+\sum_{j=1}^k J_0(w^{(j)})+o(1)\quad\text{as }\lambda\to0.\]

Moreover if $\{w_n\}$ is a positive PS sequence then $W,\, w^j$, $j=1,\ldots, k$ are nonnegative solutions to  \eqref{limit_equation}.
\end{theorem}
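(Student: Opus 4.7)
The plan is to adapt Struwe's classical bubbling decomposition to the dilation-invariant, nonlocal functional $J_0$. First I would establish boundedness of $(w_\lambda)$ in $\HHH$: combining $J_0(w_\lambda)\to c$ with $\langle J_0'(w_\lambda),w_\lambda\rangle=o(1)[w_\lambda]_\HHH$ and using the algebraic identity $J_0(u)-\tfrac{1}{2_s^\ast}\langle J_0'(u),u\rangle=\tfrac{s}{N}[u]_\HHH^2$ yields $[w_\lambda]_\HHH^2=\tfrac{N}{s}(c+o(1))+o(1)[w_\lambda]_\HHH$, so the sequence is bounded. Passing to a subsequence, $w_\lambda\rightharpoonup W$ in $\HHH$, and the density of test functions together with weak continuity of the $\fra$-pairing shows that $W$ solves \eqref{limit_equation}. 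If $w_\lambda\to W$ strongly we are done; otherwise I extract bubbles iteratively.

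For the iterative step, set $v_\lambda:=w_\lambda-W$. A Brezis--Lieb identity for the Gagliardo seminorm $[\cdot]_\HHH$ (obtained via a.e.~convergence and Fatou on the symmetric double integral) together with the usual Brezis--Lieb for $\|\cdot\|_{2_s^\ast}$ shows that $v_\lambda$ is again a PS sequence for $J_0$, with $v_\lambda\rightharpoonup 0$, $J_0(v_\lambda)=J_0(w_\lambda)-J_0(W)+o(1)$, and $[v_\lambda]_\HHH^2=[w_\lambda]_\HHH^2-[W]_\HHH^2+o(1)$. Since convergence fails, $\limsup\|v_\lambda\|_{2_s^\ast}^{2_s^\ast}>0$, and I would use the concentration function
\begin{equation*}
Q_\lambda(R):=\sup_{x\in\RR}\int_{B_R(x)}|v_\lambda(y)|^{2_s^\ast}\,dy,
\end{equation*}
together with translation and dilation invariance of $J_0$, to select $R_\lambda^{(1)}>0$ and $x_\lambda^{(1)}\in\RR$ so that $Q_\lambda(R_\lambda^{(1)})$ is realized at $x_\lambda^{(1)}$ and equals a small fixed constant $\eta_0$ (chosen below the non-vanishing threshold dictated by the fractional Sobolev inequality). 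The rescaled sequence $\tilde w_\lambda(x):=(R_\lambda^{(1)})^{\frac{N-2s}{2}}v_\lambda(R_\lambda^{(1)}x+x_\lambda^{(1)})$ is a bounded PS sequence by scale invariance; its weak limit $w^{(1)}$ solves \eqref{limit_equation}, and the choice of $\eta_0$ forces $w^{(1)}\not\equiv 0$ via a non-vanishing argument.

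Iterating, I would set $v_\lambda^{(1)}(x):=v_\lambda(x)-(R_\lambda^{(1)})^{-\frac{N-2s}{2}}w^{(1)}\bigl((x-x_\lambda^{(1)})/R_\lambda^{(1)}\bigr)$ and verify via the same Brezis--Lieb mechanism that $v_\lambda^{(1)}$ is a PS sequence with energies and seminorms subtracting as claimed. The asymptotic orthogonality $|\log(R_\lambda^{(i)}/R_\lambda^{(j)})|+|x_\lambda^{(i)}-x_\lambda^{(j)}|/R_\lambda^{(i)}\to\infty$ for $i\neq j$ is argued by contradiction: if it failed along a subsequence, one could pass to a common rescaling and deduce that the same bounded sequence has two distinct weak limits, which is impossible. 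Termination is automatic because each nontrivial solution of \eqref{limit_equation} satisfies $J_0(w^{(j)})\geq\tfrac{s}{N}S^{N/(2s)}$ (with $S$ the sharp fractional Sobolev constant), while the total energy is finite. The positivity claim follows because weak and a.e.~pointwise limits of nonnegative sequences remain nonnegative. The main technical obstacle is the combined non-vanishing plus bubble-extraction step in the nonlocal setting: because $\fra$ is nonlocal, smooth-cutoff localization produces commutator remainders that must be controlled (for instance via fractional Leibniz or pointwise representations of the singular kernel), and the Brezis--Lieb identity for $[\cdot]_\HHH$ requires care since the natural bilinear form is an integral over $\RR\times\RR$ rather than a local energy.
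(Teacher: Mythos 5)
First, be aware that the paper does not prove this statement: it is quoted verbatim as Theorem~1.1 of Palatucci--Pisante \cite{PP2}, so there is no internal proof to compare against. Your proposal reconstructs the classical Struwe global-compactness scheme in the fractional setting, which is a genuinely different route from the one in \cite{PP2}: there the decomposition is deduced from a profile decomposition theorem, itself a consequence of an improved Sobolev inequality that controls $\|u\|_{2_s^\ast}$ by an interpolation between the $\HHH$ seminorm and a Morrey norm. That approach buys exactly what you identify as the main obstacle in yours: the scales and centers are extracted from the Morrey norm, so one never has to cut off $\fra$ with smooth localizers and control the resulting nonlocal commutator and tail terms.

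On the substance of your outline, the skeleton is right but the hardest step is flagged rather than executed, and two auxiliary points are imprecise. (i) No Brezis--Lieb identity is needed for $[\cdot]_{\HHH}$: since $\HHH$ is a Hilbert space, $[w_\lambda]^2_{\HHH}=[W]^2_{\HHH}+[w_\lambda-W]^2_{\HHH}+o(1)$ follows by expanding the inner product and using weak convergence; Brezis--Lieb is only needed for $\|\cdot\|_{2_s^\ast}^{2_s^\ast}$ and for splitting $|w_\lambda|^{2_s^\ast-2}w_\lambda$ in the dual space, which is what makes $v_\lambda$ a PS sequence again. (ii) Your argument for the asymptotic orthogonality of the parameters (``two distinct weak limits of the same bounded sequence'') is not the right mechanism; the correct one is that if two extractions had comparable scales and centers, then after the common rescaling the previously subtracted profile would be the weak limit of a sequence that converges weakly to zero, forcing the later profile to vanish and contradicting $w^{(j)}\not\equiv0$. (iii) The genuine gap is the non-vanishing of $w^{(1)}$ at the chosen concentration level: in the local case this rests on a small-energy regularity estimate obtained by testing with $\eta^2 v_\lambda$, and in the nonlocal case the error terms are not supported where $\nabla\eta$ lives; controlling them is precisely the technical content of the theorem and cannot be deferred to a parenthetical. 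A similar (minor) care is needed for the final positivity claim, since the profiles are weak limits of rescaled \emph{remainders}, not of the original nonnegative sequence; one must also observe that the previously subtracted terms tend weakly to zero under the new rescaling. As written, your proof would not close at step (iii), which is presumably why the authors simply cite \cite{PP2}.
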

We denote the space of all radial functions in $\H$ by $H_r^s(\RR)$ (or by $H^s_{rad}(\RR)$) and mention the Strauss compactness lemma which states that
\begin{lemma}[Theorem 7.1, \cite{ND}]\label{Strauss_compactness_lemma}
    For $2<q<2_s^\ast$, the space $H_r^s(\RR)$ is compactly embedded in $L^q(\RR)$.
\end{lemma}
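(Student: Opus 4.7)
The plan is to follow the classical Strauss--Lions radial compactness strategy, adapted to the fractional setting. Let $\{u_n\}\subset H_r^s(\RR)$ be bounded. First I would use reflexivity of $\H$ to extract a subsequence with $u_n\rightharpoonup u$ in $\H$, where $u\in H_r^s(\RR)$ by weak closedness of the radial subspace. The fractional Rellich--Kondrachov theorem then gives the compact embedding $H^s(B_R)\hookrightarrow L^q(B_R)$ for every $q\in[1,2_s^\ast)$ and every ball $B_R\subset\RR$, so $u_n\to u$ strongly in $L^q(B_R)$ for each fixed $R>0$.

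The essential step is to control $\|u_n\|_{L^q(\{|x|>R\})}$ uniformly in $n$. For this I would invoke a fractional Strauss-type pointwise decay (\`a la Cho--Ozawa): for every radial $v\in H_r^s(\RR)$ with $N\geq 2$ there exist constants $C,\alpha>0$ depending only on $N,s$ such that $|v(x)|\leq C|x|^{-\alpha}\|v\|_{\H}$ for a.e.\ $|x|\geq 1$. Since the sequence is bounded in $\H$, this yields a uniform bound $|u_n(x)|\leq C|x|^{-\alpha}$ for $|x|\geq 1$, and interpolation,
\[
\int_{|x|>R}|u_n|^q\,dx \leq \Bigl(\sup_{|x|>R}|u_n|\Bigr)^{q-2}\int_{|x|>R}|u_n|^2\,dx \leq C\, R^{-\alpha(q-2)}\|u_n\|_2^2,
\]
shows these tails vanish as $R\to\infty$, uniformly in $n$ (using $q>2$). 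The same estimate applies to $u$.

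Combining the local strong convergence with this uniform tail control, given $\epsilon>0$ I can pick $R$ large enough that the contributions of $u_n$ and $u$ to $\|u_n-u\|_{L^q(\{|x|>R\})}$ are each below $\epsilon$, and then the local convergence on $B_R$ makes $\|u_n-u\|_{L^q(B_R)}$ small as well. This yields $u_n\to u$ strongly in $L^q(\RR)$, proving the compact embedding.

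The main obstacle will be the fractional radial decay. For $s=1$ the classical Strauss estimate uses the fundamental theorem of calculus along radial rays, which is unavailable for $s\in(0,1)$; one must instead argue via Fourier analysis or spherical harmonic decomposition, and the exponent $\alpha$ may depend on $s$ and $N$ in a nontrivial way. A conceptually cleaner alternative would be to invoke Lions' concentration-compactness principle: for radial sequences the vanishing scenario (mass escaping to infinity) is ruled out directly, because at distance $R$ from the origin an $L^2$-mass $\delta$ must spread over a shell of measure comparable to $R^{N-1}$, forcing an $L^q$-bound that decays in $R$ whenever $q>2$. Either route confines the technical difficulty to this single step; once a uniform tail estimate is in hand the rest is routine.
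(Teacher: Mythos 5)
The paper offers no proof of this lemma at all: it is quoted directly from De N\'apoli--Drelichman \cite{ND}, so the only meaningful comparison is with the method of that reference. Your overall architecture (weak limit, local fractional Rellich--Kondrachov, uniform control of the tails at infinity) is the correct and standard one.

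However, the step you yourself call essential contains a genuine error. The pointwise fractional Strauss decay you invoke --- $|v(x)|\le C|x|^{-\alpha}\|v\|_{\H}$ for a.e.\ $|x|\ge 1$ and \emph{all} radial $v\in H_r^s(\RR)$ --- is false for $s<1/2$, and the lemma is used in this paper for every $s\in(0,1)$. To see the failure, let $v_\delta$ be radial, equal to $h$ on the annulus $\{1\le|x|\le 1+\delta\}$ and smoothly cut off at scale $\delta$. Then $\|v_\delta\|_2^2\sim h^2\delta$ and $[v_\delta]_{\H}^2\sim h^2\delta^{1-2s}$, so $\|v_\delta\|_{\H}\sim h\,\delta^{(1-2s)/2}$, while $|v_\delta|=h$ on a set of positive measure in $\{|x|\ge1\}$; the claimed bound would force $1\lesssim\delta^{(1-2s)/2}\to0$ as $\delta\to0$ whenever $s<1/2$. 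This is precisely why the Cho--Ozawa radial inequality requires $s>1/2$ and why the pointwise decay theorems in \cite{ND} carry the hypothesis $s>1/p$. As written, your main route therefore only proves the lemma for $s\in(1/2,1)$. The repair is to replace pointwise decay by a tail estimate in norm: either (i) decompose into Littlewood--Paley blocks, each of which is smooth and does obey a Strauss-type bound, and resum (this is essentially the route of \cite{ND}); or (ii) use the quantitative Lions vanishing lemma
\begin{equation*}
\|u\|_{L^q(\RR)}^q\;\le\;C\Big(\sup_{y\in\RR}\|u\|_{L^2(B_1(y))}\Big)^{q-2}\,\|u\|_{\H}^2,\qquad 2<q<2_s^\ast,
\end{equation*}
together with the observation that for radial $u$ and $|y|\ge R$ one has $\|u\|_{L^2(B_1(y))}^2\le CR^{-(N-1)}\|u\|_2^2$, since the annulus $\{R-1<|x|<R+1\}$ contains about $R^{N-1}$ disjoint unit balls each carrying the same mass (this needs $N\ge2$). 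Your closing paragraph gestures at (ii), but only heuristically; the rigorous ingredient there is the Gagliardo--Nirenberg-type interpolation on unit balls behind the vanishing lemma, not any pointwise decay, and once that is in place the argument works for all $s\in(0,1)$.
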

We'll end this section by mentioning the Mountain Pass characterization of the least energy solution from \cite{A}. The same proof works for $p<2_s^\ast$.
Let's define $\Gamma=\{\gamma\in C([0,1];\H): \gamma(0)=0,I_\lambda(\gamma(1))<0\}$ and $c_\lambda:=\inf_{\gamma\in\Gamma}\max_{t\in[0,1]}I_\lambda(\gamma(t))$.
\begin{theorem}[Theorem 2, \cite{A}]\label{MPC_of_GST}
    Let $u$ be the ground state solution of \eqref{PDE} obtained in Theorem \ref{Existence_theorem}. Then there exists a path $\gamma\in\Gamma$ such that $u\in\gamma([0,1])$ and $\max_{t\in[0,1]}I(\gamma(t))=I(u).$ Moreover, $$m^\lambda=c_\lambda=\inf_{\gamma\in\Gamma}\max_{t\in[0,1]}I_\lambda(\gamma(t)).$$
\end{theorem}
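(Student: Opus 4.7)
The plan is to combine a fibering analysis with the standard Mountain Pass machinery. First, I would verify the mountain pass geometry of $I_\lambda$: clearly $I_\lambda(0)=0$, and by the Sobolev embedding $\H \hookrightarrow L^r(\RR)$ for $r \in [2, 2^*_s]$,
$$I_\lambda(u) \;\geq\; \tfrac{1}{2}\|u\|_{\H}^2 - C\|u\|_{\H}^p - \lambda C \|u\|_{\H}^q,$$
which stays strictly positive on a small sphere $\|u\|_{\H}=\rho$ since $p,q>2$. Moreover, for any fixed $v \in \H\setminus\{0\}$, $I_\lambda(tv) \to -\infty$ as $t\to\infty$ because the $-t^p\|v\|_p^p/p$ term dominates. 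This confirms the standard MP geometry and produces $c_\lambda>0$.

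The second ingredient is the fibering analysis along rays. For fixed $v\neq 0$, set
$$h_v(t) \;:=\; I_\lambda(tv) \;=\; \tfrac{t^2}{2}\|v\|_{\H}^2 - \tfrac{t^p}{p}\|v\|_p^p - \tfrac{\lambda t^q}{q}\|v\|_q^q, \qquad t\geq 0.$$
Since $2<q<p$, elementary calculus shows $h_v$ has a unique positive critical point $t_v$ where $h_v$ attains its global maximum on $[0,\infty)$, and $h_v(t)\to-\infty$ as $t\to\infty$. The condition $h_v'(t_v)=0$ reads $\langle I_\lambda'(t_v v),v\rangle=0$, so $t_v v$ lies on the Nehari-type set. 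Third, since the ground state $u$ of Theorem~\ref{Existence_theorem} satisfies $I_\lambda'(u)=0$, we have $t_u=1$. Choose $T>1$ large enough that $I_\lambda(Tu)<0$ and define $\gamma:[0,1]\to\H$ by $\gamma(\tau):=\tau T u$. Then $\gamma\in\Gamma$, and
$$\max_{\tau\in[0,1]} I_\lambda(\gamma(\tau)) \;=\; \max_{s\in[0,T]} h_u(s) \;=\; h_u(1) \;=\; I_\lambda(u)$$
is attained at $\tau=1/T$, where $\gamma(1/T)=u$. This yields the explicit optimal path through $u$ and the inequality $c_\lambda\leq I_\lambda(u)=m^\lambda$.

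For the reverse inequality $m^\lambda\leq c_\lambda$, I would invoke the quantitative Mountain Pass theorem to produce a Palais-Smale sequence $\{u_n\}\subset H^s_{rad}(\RR)$ at level $c_\lambda$ (restricting to the radial subspace via symmetric criticality). In the subcritical case $p<2^*_s$, boundedness of $\{u_n\}$ is standard, and Strauss compactness (Lemma~\ref{Strauss_compactness_lemma}) upgrades the weak limit to a nontrivial critical point $u_\infty$ of $I_\lambda$, giving $m^\lambda\leq I_\lambda(u_\infty)\leq c_\lambda$. In the critical case $p=2^*_s$, one must additionally rule out bubbling using the Palais-Smale decomposition (Theorem~\ref{PS_theorem_2}); this is the main obstacle and requires the strict threshold inequality $c_\lambda<\frac{s}{N}S^{N/(2s)}$, established by the Brezis--Nirenberg-type test-function expansion along $U_\rho$, exploiting the subcritical perturbation $\lambda\|u\|_q^q/q$ under the dimension/exponent conditions of Theorem~\ref{Existence_theorem}. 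This threshold estimate is precisely the content of \cite{A}, which is why the Mountain Pass characterization extends verbatim to the subcritical regime $p<2^*_s$ where the compactness step is immediate.
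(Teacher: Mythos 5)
Your proposal is correct and is essentially the argument behind the quoted result: the paper itself gives no proof of this theorem, stating it as Theorem 2 of \cite{A} with the remark that the same proof works for $p<2^*_s$, and your two ingredients — the fibering analysis along rays giving the explicit path through $u$ and the bound $c_\lambda\le I_\lambda(u)=m^\lambda$, plus the Palais--Smale/threshold argument producing a nontrivial critical point at level $\le c_\lambda$ for the reverse inequality — are precisely the ingredients of that proof. The only steps left implicit are routine: the weak lower semicontinuity of $I_\lambda-\tfrac1q\langle I_\lambda'(\cdot),\cdot\rangle$ to get $I_\lambda(u_\infty)\le c_\lambda$, the use of $c_\lambda>0$ to rule out $u_\infty=0$, and the identification of the radial mountain-pass level with $c_\lambda$ (e.g.\ via the Nehari characterization and symmetrization) when one restricts to $H^s_{rad}(\RR)$.
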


%%%%%%%%%%%%%%%%%%%%%%%%%%%%%%%%%%%%%
\section{Asymptotic behaviour when $p=2_s^\ast$}\label{critical}

This section is devoted to study the asymptotic behaviour of positive solutions to \eqref{PDE} when $\lambda \rightarrow 0$
and $p=2_s^\ast.$  To this end, let us recall
\begin{equation*}
        I_{\lambda}(u)=\frac12[u]_{\H}^2+\frac12\int_{\RR}|u|^2\,dx-\frac1{2_s^\ast}\int_{\RR}|u|^{2_s^\ast}\,dx-\frac{\lambda}{q}\int_{\RR}|u|^q\,dx
\end{equation*}
is the energy functional associated to \eqref{PDE}.
For $\lambda=0$, the PDE becomes 
\begin{equation}\label{limit_equation_2}
    \fra u+u=|u|^{2_s^\ast-2}u
\end{equation} and define the energy functional associated to it by 
\begin{equation}
    I_0(u):=\frac12[u]_{\H}^2+\frac12\|u\|_2^2-\frac1{2_s^\ast}\|u\|_{2_s^\ast}^{2_s^\ast}.
\end{equation}
It's well known that the above PDE doesn't have any nontrivial finite energy solution. It can easily be proved by the Pohozaev identity. We define the Nehari manifolds associated to $I_\lambda$ and $I_0$ by 
\begin{align}
    \MM_{\lambda}&:=\{u\in\H\setminus\{0\}: [u]_{\H}^2+\|u\|_2^2=\|u\|_{2_s^\ast}^{2_s^\ast}+\lambda\|u\|_q^q\}.\label{M_lambda}\\
    \MM_0&:=\{u\in\H\setminus\{0\}: [u]_{\H}^2+\|u\|_2^2=\|u\|_{2_s^\ast}^{2_s^\ast}\}.\label{M_0}
\end{align}
And for all $\lambda\geq0$, we define $$m_\lambda^\ast:=\inf_{u\in\MM_\lambda}I_\lambda(u),\quad c_\lambda':=\inf_{u\in\H\setminus\{0\}}\max_{t\geq0}I_\lambda(tu),\quad c_\lambda=\inf_{\gamma\in\Gamma}\max_{t\in[0,1]}I_{\lambda}(\gamma(t)),$$
where $\Gamma=\{\gamma\in C([0,1],\H):\gamma(0)=0,I_{\lambda}(\gamma(1))<0\}.$
\begin{remark}
    For all $\lambda\geq0$, for any nonzero $ u\in\H$ there exists a unique $t(u)>0$ such that $t(u)u\in\MM_\lambda$ and $$\max_{[0,\infty)}I_\lambda(tu)=I_\lambda(t(u)u).$$
    For a proof see chapter 4 of \cite{W}. 
\end{remark}

Now we begin with some preparatory lemmas which are needed to establish the asymptotic profiles of the solutions.

\begin{lemma}\label{M_lambda_char}
For any $\lambda\geq0$, $m_\lambda^\ast=c_\lambda'\geq c_\lambda.$
\end{lemma}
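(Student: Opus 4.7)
The argument splits naturally into (a) the equality $m_\lambda^*=c_\lambda'$ and (b) the inequality $c_\lambda'\ge c_\lambda$, both of which follow from the shape of the fibering map $t\mapsto I_\lambda(tu)$.

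For (a), I would exploit the unique projection onto the Nehari manifold recalled in the remark preceding the lemma: for every nonzero $u\in\H$ there is a unique $t(u)>0$ with $t(u)u\in\MM_\lambda$, and this $t(u)$ realizes $\max_{t\ge 0}I_\lambda(tu)$. Given any $v\in\MM_\lambda$ one has $t(v)=1$, hence $\max_{t\ge 0}I_\lambda(tv)=I_\lambda(v)$; taking infimum over $v\in\MM_\lambda$ gives $c_\lambda'\le m_\lambda^*$. Conversely, for any nonzero $u\in\H$ the point $t(u)u$ lies in $\MM_\lambda$, so
\[
\max_{t\ge 0}I_\lambda(tu)=I_\lambda(t(u)u)\ge m_\lambda^*,
\]
and taking infimum over $u$ yields $c_\lambda'\ge m_\lambda^*$. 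Combining gives $m_\lambda^*=c_\lambda'$.

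For (b), fix a nonzero $u\in\H$. Since $2<q<2_s^\ast$, each term after the quadratic one in
\[
I_\lambda(tu)=\frac{t^2}{2}\bigl([u]_{\H}^2+\|u\|_2^2\bigr)-\frac{t^{2_s^\ast}}{2_s^\ast}\|u\|_{2_s^\ast}^{2_s^\ast}-\frac{\lambda t^q}{q}\|u\|_q^q
\]
has exponent strictly greater than $2$, so $I_\lambda(tu)\to-\infty$ as $t\to\infty$. Pick any $T>t(u)$ large enough that $I_\lambda(Tu)<0$, and define $\gamma_u(r)=rTu$ for $r\in[0,1]$. Then $\gamma_u\in C([0,1];\H)$, $\gamma_u(0)=0$, and $I_\lambda(\gamma_u(1))<0$, so $\gamma_u\in\Gamma$. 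Because $t\mapsto I_\lambda(tu)$ is unimodal on $[0,\infty)$ with unique maximum at $t(u)\in(0,T)$, the maximum over $[0,T]$ agrees with the maximum over $[0,\infty)$; hence
\[
c_\lambda\le\max_{r\in[0,1]}I_\lambda(\gamma_u(r))=\max_{t\in[0,T]}I_\lambda(tu)=\max_{t\ge 0}I_\lambda(tu).
\]
Taking infimum over $u\in\H\setminus\{0\}$ gives $c_\lambda\le c_\lambda'$, completing the proof.

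\textbf{Main obstacle.} The argument is essentially a bookkeeping exercise on the fibering map; the only genuinely substantive point is confirming that $I_\lambda(tu)\to-\infty$ along every ray so that admissible paths in $\Gamma$ exist and the interval $[0,T]$ captures the full maximum. This is where the assumption $p=2_s^\ast>2$ and $q>2$ are both needed, so that each cubic-or-higher term dominates the quadratic part for large $t$; no subcritical/supercritical subtleties intervene.
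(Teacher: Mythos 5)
Your proposal is correct and follows essentially the same route as the paper: both directions of $m_\lambda^\ast=c_\lambda'$ come from the unique fibering maximum $t(u)$ and the fact that $t(v)=1$ for $v\in\MM_\lambda$, and the inequality $c_\lambda'\ge c_\lambda$ is obtained exactly as in the paper by reparametrizing each ray $t\mapsto tu$ into an admissible path in $\Gamma$ using $I_\lambda(tu)\to-\infty$. Your explicit choice $\gamma_u(r)=rTu$ with $T>t(u)$ just makes concrete the "reparametrization of the ray" that the paper invokes.
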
 
\begin{proof}\phantom{\qedhere}
$$m_\lambda^\ast=\inf_{u\in\MM_\lambda}I_\lambda(u)=\inf_{u\in\MM_\lambda}\max_{t\geq0}I_\lambda(tu)\geq c_\lambda'.$$
    For the reverse inequality observe that for any $u\in\H\setminus\{0\}$, $\max_{t\geq0}I_\lambda(tu)=I_\lambda(t(u)u)\geq m_\lambda^\ast$.

    Let $u\in\H\setminus\{0\}$.  It's easy to observe that $I_\lambda(tu)>0$ for small $t>0$ and $I_\lambda(tu)<0$ for large $t>0$. Thus it's possible to choose a reparametrization $\gamma_u:[0,1]\to\H$ of the ray passing through $0$ and $u$ and satisfying the relation $I_\lambda(\gamma_u(1))<0.$ Thus $\forall u\in\H\setminus\{0\}$, 
    \begin{equation*}
\max_{t\geq0}I_\lambda(tu)=\max_{t\in[0,1]}I_\lambda(\gamma_u(t))\geq c_\lambda.\tag*{\qed}
    \end{equation*}
\end{proof}

\begin{lemma}\label{M_lambda_GS}
    For any $\lambda>0,$ $m^\lambda=m_\lambda^\ast$. In particular,  $\inf_{u\in\MM_\lambda}I_\lambda(u)=I_\lambda(u_\lambda)$ .
\end{lemma}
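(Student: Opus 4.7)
The plan is to prove $m^\lambda = m_\lambda^\ast$ by a two-sided inequality using only material already in place: the Nehari identity, the Mountain Pass characterization from Theorem~\ref{MPC_of_GST}, and the sandwiching of levels in Lemma~\ref{M_lambda_char}. No PDE analysis is needed at this step; the argument is a purely formal consequence of these three ingredients plus the existence of a ground state from Theorem~\ref{Existence_theorem}.

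For the inequality $m^\lambda \geq m_\lambda^\ast$, I would note that any nontrivial critical point $v$ of $I_\lambda$ automatically satisfies the Nehari identity: testing $I_\lambda'(v)=0$ against $v$ itself gives
\[
[v]_{H^s(\RR)}^2 + \|v\|_2^2 = \|v\|_{2_s^\ast}^{2_s^\ast} + \lambda\|v\|_q^q,
\]
so $v \in \MM_\lambda$. Thus the admissible set in the definition of $m^\lambda$ is contained in $\MM_\lambda$, and taking infima yields $m^\lambda \geq m_\lambda^\ast$. For the reverse inequality, combine Theorem~\ref{MPC_of_GST} (which identifies the ground state level with the Mountain Pass level, $m^\lambda = c_\lambda$) with Lemma~\ref{M_lambda_char} (which gives $m_\lambda^\ast = c_\lambda' \geq c_\lambda$), to conclude
\[
m_\lambda^\ast \;\geq\; c_\lambda \;=\; m^\lambda.
\]
Together these give $m^\lambda = m_\lambda^\ast$.

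For the \emph{in particular} clause, Theorem~\ref{Existence_theorem} produces a positive ground state $u_\lambda$, i.e. a nontrivial critical point of $I_\lambda$ with $I_\lambda(u_\lambda) = m^\lambda$. By the Nehari identity above, $u_\lambda \in \MM_\lambda$, so $I_\lambda(u_\lambda) \geq m_\lambda^\ast$; combining with $I_\lambda(u_\lambda) = m^\lambda = m_\lambda^\ast$ shows that the Nehari infimum is attained at $u_\lambda$.

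There is no real obstacle here: the only point that needs a moment's attention is that Theorem~\ref{MPC_of_GST} (cited from \cite{A}) must apply uniformly in the critical and subcritical ranges covered by Theorem~\ref{Existence_theorem}, which the authors explicitly note after stating it. Once that is accepted, the equality $m^\lambda = m_\lambda^\ast$ and the attainment statement are both immediate.
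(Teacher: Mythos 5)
Your argument is correct and is essentially identical to the paper's own proof: the inclusion of the nontrivial critical points in $\MM_\lambda$ gives $m^\lambda\geq m_\lambda^\ast$, and the reverse inequality follows by combining Lemma~\ref{M_lambda_char} with Theorem~\ref{MPC_of_GST} exactly as you do. Your additional remarks on the attainment at $u_\lambda$ and the applicability of Theorem~\ref{MPC_of_GST} in both ranges are consistent with what the paper leaves implicit.
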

\begin{proof}\phantom{\qedhere}
    By the definition of Nehari manifold, any nonzero critical point of $I_\lambda$ lies in $\MM_\lambda$. Thus, set of all nonzero critical points is a subset of $\MM_\lambda$ and $$m_\lambda^\ast\leq m^\lambda.$$
    For the reverse direction, we combine Lemma \ref{M_lambda_char} and Theorem \ref{MPC_of_GST}.
    \begin{equation*}
        m_{\lambda}^\ast\geq c_\lambda=m^\lambda.\tag*{\qed}
    \end{equation*}
\end{proof}
\begin{lemma}\label{u_lambda_bdd}
    The family $(u_\lambda)$ is uniformly bounded in $\H$.
\end{lemma}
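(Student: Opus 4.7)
The plan is to combine a Nehari-type energy identity at the ground state with a uniform upper bound on the level $m^\lambda$. Since $u_\lambda$ is a critical point of $I_\lambda$, one has $\langle I_\lambda'(u_\lambda),u_\lambda\rangle=0$. Computing $I_\lambda(u_\lambda)-\tfrac{1}{q}\langle I_\lambda'(u_\lambda),u_\lambda\rangle$ eliminates the $\lambda$-dependent term and produces
\[
m^\lambda \;=\; \Big(\tfrac{1}{2}-\tfrac{1}{q}\Big)\Big([u_\lambda]_{\H}^2+\|u_\lambda\|_2^2\Big)+\Big(\tfrac{1}{q}-\tfrac{1}{2_s^\ast}\Big)\|u_\lambda\|_{2_s^\ast}^{2_s^\ast}.
\]
Both coefficients are strictly positive because $2<q<2_s^\ast$, so dropping the (nonnegative) second summand already gives
\[
[u_\lambda]_{\H}^2+\|u_\lambda\|_2^2 \;\leq\; \frac{2q}{q-2}\,m^\lambda.
\]

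It therefore suffices to bound $m^\lambda$ from above uniformly in $\lambda$. By Lemma~\ref{M_lambda_char} combined with Lemma~\ref{M_lambda_GS},
\[
m^\lambda \;=\; c_\lambda' \;=\; \inf_{v\in\H\setminus\{0\}}\,\max_{t\geq 0}I_\lambda(tv).
\]
Fix any $\phi\in\H\setminus\{0\}$ (for instance a smooth compactly supported radial bump). Since $\tfrac{\lambda t^q}{q}\|\phi\|_q^q\geq 0$ for $\lambda,t\geq 0$, dropping this nonnegative term yields
\[
m^\lambda \;\leq\;\max_{t\geq 0}I_\lambda(t\phi)\;\leq\;\max_{t\geq 0}\Big(\tfrac{t^2}{2}\big([\phi]_{\H}^2+\|\phi\|_2^2\big)-\tfrac{t^{2_s^\ast}}{2_s^\ast}\|\phi\|_{2_s^\ast}^{2_s^\ast}\Big) \;=:\; C(\phi),
\]
and, since $2<2_s^\ast$, the right-hand maximum is attained at a finite positive $t_\phi$ and gives a constant depending only on $\phi$; crucially, it is independent of $\lambda$. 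Chaining the two displayed inequalities proves the claimed uniform $\H$ bound on the family $(u_\lambda)$.

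I do not anticipate any genuine obstacle in executing this plan, since both ingredients are classical. One point to flag is that the natural Talenti-type profiles $U_\rho$ from \eqref{Urho} lie in $\HHH$ but not in $\H$, so the test function in the mountain pass upper bound must be chosen in $\H$; any smooth bump will do. The argument depends only on the ordering $2<q<p\le 2_s^\ast$ and on the sign of $\lambda$, so the same reasoning should also apply verbatim in the subcritical regime treated in Section~\ref{subcritical}.
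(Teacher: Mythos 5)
Your proposal is correct and follows essentially the same route as the paper: the quantity $I_\lambda(u_\lambda)-\tfrac1q\langle I_\lambda'(u_\lambda),u_\lambda\rangle$ controls $\|u_\lambda\|_{\H}^2$ from below, and the level $m^\lambda=m_\lambda^\ast=c_\lambda'$ is bounded above uniformly in $\lambda$ by dropping the nonnegative $\lambda$-term, i.e.\ by comparison with the $\lambda=0$ functional $I_0$ (the paper writes this as $m_\lambda^\ast\leq m_0^\ast$, you as $m^\lambda\leq\max_{t\geq0}I_0(t\phi)$ for a fixed admissible $\phi$, which is the same estimate made explicit). No gaps.
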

\begin{proof}
    Observe that $I_0(u)\geq I_\lambda(u)$ for any $u\in\H$. Thus by Lemma \ref{M_lambda_char}, $$m_0^\ast=\inf_{u\in\H\setminus\{0\}}\max_{t\geq0}I_0(tu)\geq \inf_{u\in\H\setminus\{0\}}\max_{t\geq0}I_\lambda(tu)=m_\lambda^\ast.$$
    Using, Lemma \ref{M_lambda_GS},
    \begin{equation*}
            m_0^\ast\geq m_\lambda^\ast=I_{\lambda}(u_\lambda)-\frac1q\langle I_{\lambda}'(u_\lambda),u_\lambda\rangle\geq \left(\frac12-\frac1q\right)\|u\|_{\H}^2.
    \end{equation*}
    Thus, $(u_\lambda)$ is uniformly bounded in $\H$.
\end{proof}
For $\lambda>0$, we recall the rescaling defined in \eqref{rescale}
$$
    v(x)=\lambda^{\frac1{q-2}}u\left(\lambda^{\frac{2_s^\ast-2}{2s(q-2)}}x\right)
$$ which transforms \eqref{PDE} into 
\begin{equation}\label{Rescaled_PDE}
    \fra v+\lambda^\sigma v=|v|^{2_s^\ast-2}v+\lambda^\sigma |v|^{q-2}v\quad\text{in }\RR,
\end{equation}
where 
\begin{equation}
    \sigma=\frac{2_s^\ast-2}{q-2}.
\end{equation}
The corresponding energy functional is given by 
\begin{equation}
J_\lambda(v)=\frac12[v]_{\H}^2+\frac{\lambda^\sigma}2\int_{\RR}|v|^2\,dx-\frac1{2_s^\ast}\int_{\RR}|v|^{2_s^\ast}\,dx-\frac{\lambda^\sigma}{q}\int_{\RR}|v|^q\,dx.
\end{equation}
The formal limit equation for \eqref{Rescaled_PDE} as $\lambda\to0$ is the Emden-Fowler equation \eqref{limit_equation}.
%$\fra v=|v|^{2_s^\ast-2}v\quad\text{in }\RR$.
We denote the corresponding Nehari manifolds as follows:
\begin{align}
    \NN_{\lambda}&:=\{v\in\H\setminus\{0\}: [v]_{\H}^2+\lambda^\sigma\|v\|_2^2=\|v\|_{2_s^\ast}^{2_s^\ast}+\lambda^\sigma\|v\|_q^q\}\label{N_lambda}.\\
    \NN_0&:=\{v\in\H\setminus\{0\}: [v]_{\H}^2=\|v\|_{2_s^\ast}^{2_s^\ast}\}\label{N_0}.
\end{align}
And define, $$m_\lambda:=\inf_{u\in\NN_\lambda}J_\lambda(u),\quad m_0:=\inf_{u\in\NN_0}J_0(u).$$
\begin{lemma}\label{rescaling_lemma}
    For $\lambda>0$, $u\in\H$ and $v$ is the rescaling \eqref{rescale} of $u.$ Then
    \begin{enumerate}
        \item $[v]_{\H}^2=[u]_{\H}^2$, $\|v\|_{2_s^\ast}^{2_s^\ast}=\|u\|_{2_s^\ast}^{2_s^\ast}.$
        \item $\lambda^\sigma \|v\|_2^2=\|u\|_2^2$, $\lambda^\sigma \|v\|_q^q=\lambda\|u\|_q^q.$
        \item $J_\lambda(v)=I_\lambda(u)$, $m_\lambda=m_\lambda^\ast$.
    \end{enumerate}
    In particular, $m_\lambda=\inf_{u\in\mathcal{N}_\lambda}J_\lambda(u)=J_\lambda(v_\lambda)$.
\end{lemma}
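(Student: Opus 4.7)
Set $\alpha := \frac{1}{q-2}$ and $\beta := \frac{2_s^\ast-2}{2s(q-2)}$, so the rescaling reads $v(x) = \lambda^{\alpha} u(\lambda^{\beta} x)$. The proof is a direct change-of-variables computation, term by term, followed by reassembly in $J_\lambda$ and a comparison of the Nehari constraints. No analytic input beyond the substitution formula is needed; the only substance is that the exponents $\alpha$, $\beta$ were calibrated so that the seminorm and the $L^{2_s^\ast}$-term are scale-invariant while the mass term and the $L^q$-term both pick up the prefactor $\lambda^\sigma$ appearing in $J_\lambda$.

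First I would record the scaling of the $L^r$ norm for $r\geq 2$. The substitution $y = \lambda^\beta x$ immediately yields
\begin{equation*}
\|v\|_r^r = \lambda^{\alpha r - N\beta}\|u\|_r^r,
\end{equation*}
and the identity $\tfrac{N(2_s^\ast-2)}{2s} = 2_s^\ast$ (which follows from $2_s^\ast - 2 = \tfrac{4s}{N-2s}$) reduces the exponent to $(r-2_s^\ast)/(q-2)$. Setting $r = 2_s^\ast$ gives $\|v\|_{2_s^\ast}^{2_s^\ast} = \|u\|_{2_s^\ast}^{2_s^\ast}$; setting $r = q$ and using $\sigma = (2_s^\ast-2)/(q-2)$ gives $\lambda^\sigma\|v\|_q^q = \lambda\|u\|_q^q$; and setting $r = 2$ gives $\lambda^\sigma\|v\|_2^2 = \|u\|_2^2$. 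For the Gagliardo seminorm, the same substitution applied in both integration variables produces $[v]_\H^2 = \lambda^{2\alpha - (N-2s)\beta}[u]_\H^2$, and because $(N-2s)(2_s^\ast-2)/(2s) = 2$ the exponent is zero, so $[v]_\H^2 = [u]_\H^2$. This establishes (1) and (2); summing these four scaling identities with the coefficients appearing in $J_\lambda$ reproduces $I_\lambda(u)$, giving $J_\lambda(v) = I_\lambda(u)$.

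For the equality $m_\lambda = m_\lambda^\ast$, the same scaling identities show that the Nehari condition \eqref{N_lambda} defining $\NN_\lambda$ becomes, after clearing the appropriate powers of $\lambda$, precisely the Nehari condition \eqref{M_lambda} defining $\MM_\lambda$. Consequently the rescaling $u\mapsto v$ is a bijection $\MM_\lambda \to \NN_\lambda$, and combined with $J_\lambda(v) = I_\lambda(u)$ it yields $m_\lambda = m_\lambda^\ast$. Specializing to $u = u_\lambda$ (so that $v = v_\lambda$) and invoking Lemma \ref{M_lambda_GS} gives the ``in particular'' clause $J_\lambda(v_\lambda) = I_\lambda(u_\lambda) = m^\lambda = m_\lambda^\ast = m_\lambda$. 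The only real obstacle is the algebraic bookkeeping behind the two identities $\tfrac{N(2_s^\ast-2)}{2s} = 2_s^\ast$ and $\tfrac{(N-2s)(2_s^\ast-2)}{2s} = 2$, which are precisely what make the rescaling work.
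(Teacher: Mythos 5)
Your proof is correct and follows essentially the same route as the paper: a direct change-of-variables computation for the seminorm and each $L^r$ norm (the paper does $r=2_s^\ast$, $2$, $q$ separately; you unify them via the exponent $(r-2_s^\ast)/(q-2)$), followed by the observation that the rescaling is an energy-preserving bijection between $\MM_\lambda$ and $\NN_\lambda$. The algebraic identities you isolate are exactly the cancellations the paper's computation relies on, so nothing further is needed.
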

\begin{proof}
    \noindent{(1)} $$\|v\|_{2_s^\ast}^{2_s^\ast}=\lambda^{\frac{2_s^\ast}{q-2}}\int_{\RR}\left|u\left(\lambda^{\frac{2_s^\ast-2}{2s(q-2)}}x\right)\right|^{2_s^\ast}\,dx=\lambda^{\frac{2_s^\ast}{q-2}}\lambda^{\frac{-N(2_s^\ast-2)}{2s(q-2)}}\|u\|_{2_s^\ast}^{2_s^\ast}=\|u\|_{2_s^\ast}^{2_s^\ast}.$$
    and 
    \begin{equation*}
        \begin{aligned}
            [v]_{\H}^2&=\lambda^{\frac{2}{q-2}}\iint_{\mathbb{R}^{2N}}\frac{|u(\lambda^{\frac{2_s^\ast-2}{2s(q-2)}}x)-u(\lambda^{\frac{2_s^\ast-2}{2s(q-2)}}y)|^2}{|x-y|^{N+2s}}\,dx\,dy\\&=\lambda^{\frac{2}{q-2}}\lambda^{\frac{-(N-2s)(2_s^\ast-2)}{2s(q-2)}}[u]_{\H}^2=[u]_{\H}^2.
        \end{aligned}
    \end{equation*}
    \noindent{(2)}
    \begin{equation*}
        \begin{aligned}
            \lambda^\sigma \|v\|_2^2&=\lambda^{\frac{2_s^\ast-2}{q-2}}\lambda^{\frac{2}{q-2}}\lambda^{\frac{-N(2_s^\ast-2)}{2s(q-2)}}\|u\|_2^2=\|u\|_2^2.\\
            \lambda^\sigma \|v\|_q^q&=\lambda^{\frac{2_s^\ast-2}{q-2}}\lambda^{\frac{q}{q-2}}\lambda^{\frac{-N(2_s^\ast-2)}{2s(q-2)}}\|u\|_q^q=\lambda\|u\|_q^q.
        \end{aligned}
    \end{equation*}
    \noindent{(3)} By (1), (2), $J_\lambda(v)=I_\lambda(u)$. Since scaling \eqref{rescale} gives a bijection between the Nehari manifolds $\MM_\lambda$ and $\NN_\lambda$, $m_\lambda=m_\lambda^\ast.$
\end{proof}
Suppose $v_\lambda$ is the rescaling \eqref{rescale} of $u_\lambda$. Then by the above lemma $J_\lambda(v_\lambda)=I_\lambda(u_\lambda)$ and $v_\lambda$ is a ground state solution of \eqref{Rescaled_PDE}. By Corollary \ref{Pohozaev_corollary}, $v_\lambda$ satisfies the Pohozaev identity, 
\begin{equation}
    \frac1{2_s^\ast}[v_\lambda]_{\H}^2+\frac{\lambda^\sigma}{2}\|v_\lambda\|_2^2=\frac1{2_s^\ast}\|v_\lambda\|_{2_s^\ast}^{2_s^\ast}+\frac{\lambda^\sigma}{q}\|v_\lambda\|_q^q.
\end{equation}
Note that $m_0$ is attained on $\NN_0$ by the Talenti function $$U_1(x)=c_{N,s}\frac1{(1+|x|^2)^{\frac{N-2s}{2}}}$$ and it's family of rescalings $$U_\rho(x)=\rho^{-\frac{N-2s}{2}}U_1(x/\rho),\rho>0.$$ For $v\in \H\setminus\{0\}$, we set
\begin{equation}\label{tau_def}
    \tau(v):=\frac{[v]_{\H}^2}{\|v\|_{2_s^\ast}^{2_s^\ast}}.
\end{equation}
 Then $(\tau(v))^{\frac{N-2s}{4s}}v\in\NN_0$ for all $v\in \H\setminus\{0\}$ and $v\in\NN_0$ if{}f $\tau(v)=1.$
\begin{lemma}\label{N_lambda_char}
    Let $\lambda\geq0$. Set 
    \begin{equation}\label{4g}
        v_t(x)=\begin{cases}
            v\left(\frac xt\right),\quad t>0\\0,\quad t=0.
        \end{cases}
    \end{equation}
Then $$m_\lambda=\inf_{v\in\H\setminus\{0\}}\sup_{t\geq0}J_\lambda(tv)=\inf_{v\in\H\setminus\{0\}}\sup_{t\geq0}J_\lambda(v_t).$$ In particular, we have $m_\lambda=J_\lambda(v_\lambda)=\sup_{t\geq0}J_\lambda(tv_\lambda)=\sup_{t\geq0}J_\lambda((v_\lambda)_t)$.
\end{lemma}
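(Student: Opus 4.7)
My plan is to prove the two equalities separately: the first via the Nehari fibering lemma, the second via the Pohozaev identity combined with a mountain-pass path. As a preliminary I transfer the analogues of Lemma~\ref{M_lambda_char}, Lemma~\ref{M_lambda_GS} and Theorem~\ref{MPC_of_GST} from $(I_\lambda,\MM_\lambda)$ to $(J_\lambda,\NN_\lambda)$: the energy-preserving bijection of Lemma~\ref{rescaling_lemma} identifies Nehari manifolds and mountain-pass classes, hence $m_\lambda$ equals the mountain-pass level of $J_\lambda$, and for every nonzero $v \in \H$ the scalar fibre $t \mapsto J_\lambda(tv)$ has a unique positive maximizer $t(v)$ with $t(v)v \in \NN_\lambda$.

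The first equality is then immediate: for any nonzero $v$, $\sup_{t\geq 0}J_\lambda(tv) = J_\lambda(t(v)v) \geq m_\lambda$, and for any $w \in \NN_\lambda$ uniqueness forces $t(w)=1$, so $\sup_{t\geq 0}J_\lambda(tw) = J_\lambda(w)$; infimizing over $v\in\H\setminus\{0\}$ and over $w\in\NN_\lambda$ yields both inequalities. For the second equality, I compute along the dilation fibre
\[
J_\lambda(v_t) = A(v)\,t^{N-2s} + B(v)\,t^{N}, \qquad A(v) := \tfrac{1}{2}[v]_{\H}^{2}, \quad B(v) := \tfrac{\lambda^\sigma}{2}\|v\|_2^2 - \tfrac{1}{2_s^\ast}\|v\|_{2_s^\ast}^{2_s^\ast} - \tfrac{\lambda^\sigma}{q}\|v\|_q^q,
\]
using the scaling identities $[v_t]_{\H}^2 = t^{N-2s}[v]_{\H}^2$ and $\|v_t\|_r^r = t^N\|v\|_r^r$. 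If $B(v) \geq 0$ the sup is $+\infty$; if $B(v) < 0$ the fibre tends to $-\infty$ at infinity, so for $T$ large enough the reparametrized dilation $s \mapsto v_{sT}$, $s\in[0,1]$, belongs to the mountain-pass class for $J_\lambda$, yielding $\sup_{t\geq 0}J_\lambda(v_t) \geq m_\lambda$. Infimizing over $v$ gives $\inf_{v\neq 0}\sup_{t\geq 0}J_\lambda(v_t) \geq m_\lambda$.

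For the matching upper bound I test at $v=v_\lambda$: Corollary~\ref{Pohozaev_corollary} applied to \eqref{Rescaled_PDE} reads $\tfrac{1}{2_s^\ast}[v_\lambda]_{\H}^2 = \tfrac{1}{2_s^\ast}\|v_\lambda\|_{2_s^\ast}^{2_s^\ast} + \tfrac{\lambda^\sigma}{q}\|v_\lambda\|_q^q - \tfrac{\lambda^\sigma}{2}\|v_\lambda\|_2^2$, which is precisely the stationarity condition $(N-2s)A(v_\lambda) + NB(v_\lambda)=0$ and forces $B(v_\lambda) = -\tfrac{N-2s}{N}A(v_\lambda) < 0$. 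An elementary one-variable analysis of $J_\lambda((v_\lambda)_t) = A(v_\lambda)\,t^{N-2s}\bigl(1 - \tfrac{N-2s}{N}t^{2s}\bigr)$ then shows that $t=1$ is the unique global maximizer, so $\sup_{t\geq 0}J_\lambda((v_\lambda)_t) = J_\lambda(v_\lambda) = m_\lambda$, which closes the equality and simultaneously yields the ``in particular'' clause. The main subtlety is this algebraic identification of $t=1$ as the maximizer of the dilation fibre through the Pohozaev identity; once that is verified, the remaining arguments are routine Nehari-fibering and mountain-pass manipulations already at hand in the section.
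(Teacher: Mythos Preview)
Your proof is correct and matches the paper's for the first equality and for the upper bound of the second equality; the only genuine difference is in the \emph{lower bound} of the dilation characterization. When $B(v)<0$, the paper does not pass through the mountain-pass level: it observes that $B(v)<0$ forces the Nehari condition $\|v\|_{2_s^\ast}^{2_s^\ast}+\lambda^\sigma\|v\|_q^q-\lambda^\sigma\|v\|_2^2>0$ (since $\tfrac12\geq\tfrac1q,\tfrac1{2_s^\ast}$), so some $v_{t(v)}\in\NN_\lambda$, whence $m_\lambda\leq J_\lambda(v_{t(v)})\leq\sup_{t\geq0}J_\lambda(v_t)$ directly. Your route instead feeds the dilation path into the mountain-pass class and invokes the equality $m_\lambda=c_\lambda^{J}$, which you obtain by transferring Theorem~\ref{MPC_of_GST} through the bijection of Lemma~\ref{rescaling_lemma}. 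Both are valid; the paper's argument is slightly more self-contained (it never leaves the Nehari framework), while yours has the advantage of making the mountain-pass geometry explicit. One small caveat: your transfer via Lemma~\ref{rescaling_lemma} is only available for $\lambda>0$, so the case $\lambda=0$ in the lemma's statement would need the (standard, but separate) fact that the mountain-pass level for $J_0$ coincides with $m_0$; the paper's Nehari argument covers $\lambda\geq0$ uniformly.
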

\begin{proof}
Following the arguments of Lemma \ref{M_lambda_GS}, it's easy to see that $$m_\lambda=\inf_{v\in\H\setminus\{0\}}\sup_{t\geq0}J_\lambda(tv).$$ 
Next we prove that $m_\lambda=\inf_{v\in\H\setminus\{0\}}\sup_{t\geq0}J_\lambda(v_t)$.  Towards this, we first note that

 (i) for any $v\in\H$ with $$\int_{\RR}|v|^{2_s^\ast}\,dx+\lambda^\sigma\int_{\RR}|v|^q\,dx-\lambda^\sigma\int_{\RR}|v|^2\,dx>0, $$ there exists a unique 
$$t(v):=\bigg(\frac{[v]_{\H}^2}{\|v\|^{2_s^\ast}_{2_s^\ast}+\lambda^\sigma\|v\|^{q}_{q}-\lambda^\sigma\|v\|^{2}_{2}}\bigg)^\frac{1}{2s} >0$$ such that $v_{t(v)}\in\NN_\lambda$. As $v_\lambda\in\mathcal{N}_\lambda$, it follows that $t(v_\lambda)=1$.

(ii) For any $v\in\H$ with $$\frac1{2_s^\ast}\int_{\RR}|v|^{2_s^\ast}\,dx+\frac{\lambda^\sigma}q\int_{\RR}|v|^q\,dx-\frac{\lambda^\sigma}2\int_{\RR}|v|^2\,dx>0 $$ there exists a unique $$\tilde{t}(v)^{2s}:=\frac{[v]_{\H}^2}{2_s^\ast\left(\frac{\|v\|^{2_s^\ast}_{2_s^\ast}}{2_s^\ast}+\lambda^\sigma\frac{\|v\|_q^q}{q}-\lambda^\sigma\frac{\|v\|_2^2}{2}\right)} >0$$ such that $\max_{t\geq 0}J_\lambda(v_t)=J_\lambda(v_{\tilde{t}(v)})$. Using the Pohozaev identity for $v_\lambda$, it follows that $\tilde{t}(v_\lambda)=1$.

   Therefore,  $$\inf_{v\in\H\setminus\{0\}}\sup_{t\geq0}J_\lambda(v_t)\leq \sup_{t\geq0}J_\lambda((v_\lambda)_t)=J_\lambda(v_\lambda)=m_\lambda.$$
   
    For the reverse inequality, we see that if $\frac{\lambda^\sigma}2\int_{\RR}|v|^2\,dx-\frac1{2_s^\ast}\int_{\RR}|v|^{2_s^\ast}\,dx-\frac{\lambda^\sigma}q\int_{\RR}|v|^q\,dx\geq0$ then 
    $$\sup_{t\geq0} J_\lambda(v_t)=\sup_{t\geq0} \big(c_1t^{N-2s}+c_2t^N\big),$$
 where $c_1=c_1(v)=\frac{1}{2}[v]_{\H}^2$  and $c_2=c_2(v)=\frac{\lambda^\sigma}2\|v\|_2^2-\frac1{2_s^\ast}\|v\|_{2_s^\ast}^{2_s^\ast}-\frac{\lambda^\sigma}q\|v\|_q^q$. Since $c_1, c_2\geq 0$, clearly 
 $\sup_{t\geq0} J_\lambda(v_t)=\infty>m_\lambda.$ On the other hand, if $\frac{\lambda^\sigma}2\int_{\RR}|v|^2\,dx-\frac1{2_s^\ast}\int_{\RR}|v|^{2_s^\ast}\,dx-\frac{\lambda^\sigma}q\int_{\RR}|v|^q\,dx<0 $ then a simple computation yields $\int_{\RR}|v|^{2_s^\ast}\,dx+\lambda^\sigma\int_{\RR}|v|^q\,dx-\lambda^\sigma\int_{\RR}|v|^2\,dx>0$ and hence by (i), $v_{t(v)}\in \mathcal{N}_\lambda$. 
 Therefore, 
 $$m_\lambda=\inf_{v\in\mathcal{N}_\lambda}J_\lambda(v)\leq J_\lambda(v_{t(v)})\leq \sup_{t\geq0}J_\lambda(v_t).$$
 Thus for any nonzero $v\in\H$, $\sup_{t\geq0}J_\lambda(v_t)\geq m_\lambda$. Hence the lemma follows. 
 \end{proof}
\begin{lemma}\label{v_lambda_bdd}
    Let $\lambda>0$. The family $\{v_\lambda\}$ is uniformly bounded in $\H$. In particular, $\{v_\lambda\}$ is bounded in $L^p(\RR)$ uniformly for all $p\in[2,2_s^\ast].$
\end{lemma}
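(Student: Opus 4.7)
The plan is to extract from the Nehari constraint $v_\lambda\in\NN_\lambda$ together with the Pohozaev identity a purely algebraic relation tying $\|v_\lambda\|_2^2$ directly to $\|v_\lambda\|_q^q$, and then to close an interpolation bootstrap against the uniform bounds that come for free from the rescaling. The motivation is that a naive approach fails: by Lemma~\ref{rescaling_lemma} we have $\|v_\lambda\|_2^2=\lambda^{-\sigma}\|u_\lambda\|_2^2$, which would blow up as $\lambda\to 0$ if we only invoked the $H^s$-bound from Lemma~\ref{u_lambda_bdd}. The required sharper information $\|u_\lambda\|_2^2\lesssim\lambda^\sigma$ must come out of the two identities.

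First I would note that Lemma~\ref{rescaling_lemma} gives $[v_\lambda]_{\H}^2=[u_\lambda]_{\H}^2$, which is uniformly bounded by Lemma~\ref{u_lambda_bdd}; the fractional Sobolev embedding then yields $\|v_\lambda\|_{2_s^\ast}\lesssim 1$ uniformly. Next I would combine the Nehari identity for $v_\lambda$,
\[
[v_\lambda]_{\H}^2+\lambda^\sigma\|v_\lambda\|_2^2=\|v_\lambda\|_{2_s^\ast}^{2_s^\ast}+\lambda^\sigma\|v_\lambda\|_q^q,
\]
with the Pohozaev identity applied to $v_\lambda$ (Corollary~\ref{Pohozaev_corollary}),
\[
\tfrac{1}{2_s^\ast}[v_\lambda]_{\H}^2+\tfrac{\lambda^\sigma}{2}\|v_\lambda\|_2^2=\tfrac{1}{2_s^\ast}\|v_\lambda\|_{2_s^\ast}^{2_s^\ast}+\tfrac{\lambda^\sigma}{q}\|v_\lambda\|_q^q,
\]
and eliminate the two critical terms $[v_\lambda]_{\H}^2$ and $\|v_\lambda\|_{2_s^\ast}^{2_s^\ast}$. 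The $\lambda^\sigma$ factors cancel and leave the $\lambda$-free relation
\[
\tfrac{2_s^\ast-2}{2}\|v_\lambda\|_2^2\;=\;\tfrac{2_s^\ast-q}{q}\|v_\lambda\|_q^q.
\]

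Finally I would close the estimate by $L^p$-interpolation: $\|v_\lambda\|_q\le\|v_\lambda\|_2^{\theta}\|v_\lambda\|_{2_s^\ast}^{1-\theta}$ with $\theta=\frac{2(2_s^\ast-q)}{q(2_s^\ast-2)}\in(0,1)$. Substituting into the identity above gives $\|v_\lambda\|_2^2\lesssim\|v_\lambda\|_2^{q\theta}$, where $q\theta=\frac{2(2_s^\ast-q)}{2_s^\ast-2}<2$ precisely because $q>2$. This forces $\|v_\lambda\|_2$ to be uniformly bounded; combined with the first step one obtains the full uniform $H^s$-bound, and the $L^p$-bound for every $p\in[2,2_s^\ast]$ follows from standard interpolation between $L^2$ and $L^{2_s^\ast}$.

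The main obstacle, as flagged, is that the rescaling in \eqref{rescale} is tuned to normalize the critical block and consequently inflates the $L^2$-norm by a factor $\lambda^{-\sigma/2}\to\infty$; Sobolev embedding alone cannot compensate for this, so the $L^2$-bound cannot be obtained from $H^s$-boundedness of $u_\lambda$ by soft arguments. It is precisely the genuine PDE information encoded in the Nehari$+$Pohozaev system that produces the missing uniform $L^2$-bound in a $\lambda$-independent form.
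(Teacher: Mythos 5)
Your proposal is correct and follows essentially the same route as the paper: eliminate the seminorm and critical terms between the Nehari constraint and the Pohozaev identity to get the $\lambda$-free relation $\|v_\lambda\|_2^2=\frac{2(2_s^\ast-q)}{q(2_s^\ast-2)}\|v_\lambda\|_q^q$, then close via interpolation of $\|v_\lambda\|_q$ between $L^2$ and $L^{2_s^\ast}$ together with the Sobolev bound $\|v_\lambda\|_{2_s^\ast}\lesssim 1$. Your observation that the exponent $q\theta=\frac{2(2_s^\ast-q)}{2_s^\ast-2}<2$ makes the bootstrap close is exactly the mechanism behind the paper's explicit bound $\|v_\lambda\|_2^2\leq\big(\tfrac{2(2_s^\ast-q)}{q(2_s^\ast-2)}\big)^{\frac{2_s^\ast-2}{q-2}}\big(S_\ast^{-1}[v_\lambda]_{\H}^2\big)^{2_s^\ast/2}$.
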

\begin{proof}
    Since $\{u_\lambda\}$ is uniformly bounded in $\H$ and $[u_\lambda]_{\H}=[v_\lambda]_{\H}$ so we just need to show $\{v_\lambda\}$ is uniformly bounded in $L^2(\RR).$ By $v_\lambda\in\NN_\lambda$ and Corollary \ref{Pohozaev_corollary},
    \begin{align*}
        [v_\lambda]_{\H}^2+\lambda^\sigma\|v_\lambda\|_2^2&=\|v_\lambda\|_{2_s^\ast}^{2_s^\ast}+\lambda^\sigma\|v_\lambda\|_q^q.\\
        \frac1{2_s^\ast}[v_\lambda]_{\H}^2+\frac{\lambda^\sigma}{2}\|v_\lambda\|_2^2&=\frac1{2_s^\ast}\|v\|_{2_s^\ast}^{2_s^\ast}+\frac{\lambda^\sigma}{q}\|v_\lambda\|_q^q.
    \end{align*}
    Thus, 
    \begin{align}
        \left(\frac12-\frac1{2_s^\ast}\right)\lambda^\sigma\|v_\lambda\|_2^2&=\left(\frac1q-\frac1{2_s^\ast}\right)\lambda^\sigma\|v_\lambda\|_q^q.\\
        \|v_\lambda\|_2^2&=\frac{2(2_s^\ast-q)}{q(2_s^\ast-2)}\|v_\lambda\|_q^q\label{norm_2,q}.
    \end{align}
    Using Sobolev Embedding theorem and interpolation inequality,
\begin{equation}
    \|v_\lambda\|_2^2\leq \left(\frac{2(2_s^\ast-q)}{q(2_s^\ast-2)}\right)^{\frac{2_s^\ast-2}{q-2}}\left(\frac1{S_\ast}[v_\lambda]_{\H}^2\right)^{\frac{2_s^\ast}{2}}.
\end{equation}
Therefore, $\{v_\lambda\}$ is bounded in $L^2(\RR).$ By interpolation inequality for any $p\in[2,2_s^\ast]$,
\begin{equation*}
    \|v_\lambda\|_p^p\leq \left(\|v_\lambda\|_2\right)^{\frac{2(2_s^\ast-p)}{2_s^\ast-2}}\left(\|v_\lambda\|_{2_s^\ast}\right)^{\frac{2_s^\ast(p-2)}{2_s^\ast-2}}\leq \left(\frac{2(2_s^\ast-q)}{q(2_s^\ast-2)}\right)^{\frac{2_s^\ast-p}{q-2}}\left(\frac1{S_\ast}[v_\lambda]_{\H}^2\right)^{\frac{2_s^\ast}{2}}.
\end{equation*}
and we have (see \cite{MM1})$$\lim_{q\to2}\left(\frac{2(2_s^\ast-q)}{q(2_s^\ast-2)}\right)^{\frac{2_s^\ast-p}{q-2}}=e^{-\frac{N(2_s^\ast-p)}{4}},\quad\forall p\in[2,2_s^\ast].$$ Therefore, $\{v_\lambda\}$ is bounded in $L^p(\RR)$ uniformly for all $p\in[2,2_s^\ast].$
\end{proof}
\noindent\textbf{Idea of the proof of theorem \ref{Asymp_1}:} Since $\{v_\lambda\}$ is a family of positive radially symmetric and decreasing functions bounded in $\H$, there exists $v_0\in\H$ such that, by Lemma \ref{Strauss_compactness_lemma},
\begin{equation}
    v_\lambda\rightharpoonup v_0\quad\text{in }\H,\quad v_\lambda\to v_0\quad\text{in } L^p(\RR)\quad \forall p\in(2,2_s^\ast)
\end{equation}
and 
\begin{equation}
    v_\lambda(x)\to v_0(x)\quad \text{a.e. }\RR, v_\lambda\to v_0\quad\text{in } L_{\text{loc}}^2(\RR).
\end{equation}
Therefore, passing the limit $\lambda\to 0$ in the weak formulation of \eqref{Rescaled_PDE} yields that $v_0$ satisfies $\fra v=v^{2^*_s-1}$ in $\RR$. Next we shall show that $\{v_\lambda\}$ is a PS sequence for $J_0$. To do that we need to find an estimate on $m_\lambda-m_0$. Next using the PS decomposition theorem \ref{PS_theorem_2}, we'll show that $v_\lambda\to v_0$ in $\HHH.$
Finally we'll show that $v_\lambda\to v_0$ in $L^2(\RR).$
\begin{remark}[Remark 4.5, \cite{MM1}]\label{remark_estimate}
It's easy to check that 
\begin{equation*}
    \lim_{q\to2}\left(\frac2q\right)^{\frac{2_s^\ast-2}{q-2}}=e^{\frac{-2s}{N-2s}},\quad \lim_{q\to2}\left(\frac{2_s^\ast-q}{2_s^\ast-2}\right)^{\frac{2_s^\ast-2}{q-2}}=e^{-1},
\end{equation*}
and $$\lim_{q\to2_s^\ast}\frac1{2_s^\ast-q}\left(\frac{2_s^\ast-q}{2_s^\ast-2}\right)^{\frac{2_s^\ast-2}{q-2}}=\frac{N-2s}{4s}=\frac1{2_s^\ast-2}.$$ Hence, 
$$\left(\frac{2(2_s^\ast-q)}{q(2_s^\ast-2)}\right)^{\frac{2_s^\ast-2}{q-2}}\sim 2_s^\ast-q.$$
\end{remark}
\begin{lemma}\label{lemma_4b}
Let $q\in(2,2_s^\ast)$,
    \begin{equation}\label{4f}
        Q(q):=\left(\frac{2_s^\ast-q}{2_s^\ast-2}\right)^{\frac{2_s^\ast-q}{q-2}}\quad\text{and}\quad G(q):=\frac{q-2}{2_s^\ast-2}Q(q) .
    \end{equation}
    Then $Q(q)\sim1$ and $G(q)\sim q-2$ and $\forall\lambda>0:$
    \begin{enumerate}
        \item $1<\tau(v_\lambda)\leq 1+G(q)\lambda^\sigma$,
        \item $m_{\lambda}>m_0\left(1-\lambda^\sigma \frac{N}{s}G(q)(1+G(q)\lambda^\sigma)^{\frac{N-2s}{2s}}\right).$
    \end{enumerate}
\end{lemma}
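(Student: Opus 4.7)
The plan is to prove (1) first, then deduce (2) from (1) combined with the characterization of $m_0$ in Lemma~\ref{N_lambda_char}. The asymptotic claims $Q(q)\sim 1$ and $G(q)\sim q-2$ are bookkeeping: $Q$ extends continuously to $[2,2_s^\ast]$ with $\lim_{q\to 2^+}Q(q)=e^{-1}$ (Taylor-expand $\log(1-(q-2)/(2_s^\ast-2))$) and $\lim_{q\to (2_s^\ast)^-}Q(q)=1$ (since $(2_s^\ast-q)\log((2_s^\ast-q)/(2_s^\ast-2))\to 0$), hence $Q$ is bounded above and below by positive constants on $(2,2_s^\ast)$; then $G(q)=\tfrac{q-2}{2_s^\ast-2}Q(q)$ gives $G(q)\sim q-2$.

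The starting point for (1) is the identity $\|v_\lambda\|_2^2=c(q)\|v_\lambda\|_q^q$ with $c(q)=\tfrac{2(2_s^\ast-q)}{q(2_s^\ast-2)}$, derived from the Nehari and Pohozaev identities in the proof of Lemma~\ref{v_lambda_bdd}. Substituting this into the Nehari identity and rearranging yields
\[
\tau(v_\lambda)-1 \;=\; \frac{\lambda^\sigma\cdot 2_s^\ast(q-2)}{q(2_s^\ast-2)}\cdot\frac{\|v_\lambda\|_q^q}{\|v_\lambda\|_{2_s^\ast}^{2_s^\ast}},
\]
which is strictly positive since $q>2$, giving the lower bound in (1). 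For the upper bound I apply H\"older's inequality with interpolation exponent $\theta=c(q)$: $\|v_\lambda\|_q^q\leq\|v_\lambda\|_2^{q\theta}\|v_\lambda\|_{2_s^\ast}^{q(1-\theta)}$. Plugging in $\|v_\lambda\|_2^2=c(q)\|v_\lambda\|_q^q$ and raising to the power $(2_s^\ast-2)/(q-2)$ produces $\|v_\lambda\|_q^q/\|v_\lambda\|_{2_s^\ast}^{2_s^\ast}\leq c(q)^{(2_s^\ast-q)/(q-2)}=(2/q)^{(2_s^\ast-q)/(q-2)}Q(q)$. Plugging back, the claim $\tau(v_\lambda)-1\leq G(q)\lambda^\sigma$ reduces to the scalar inequality $\tfrac{2_s^\ast}{q}(2/q)^{(2_s^\ast-q)/(q-2)}\leq 1$, equivalently $(2_s^\ast-q)\log(q/2)\geq(q-2)\log(2_s^\ast/q)$. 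This is exactly the statement that the secant slope of $\log$ over $[2,q]$ dominates that over $[q,2_s^\ast]$, and follows from concavity of $\log$. I expect identifying this concavity trick (needed to pin down the exact constant $G(q)$ rather than an extra-factor variant) to be the main obstacle.

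For (2), combining Nehari with Pohozaev yields $J_\lambda(v_\lambda)=\tfrac{s}{N}[v_\lambda]_{\H}^2$, so $m_\lambda=\tfrac{s}{N}[v_\lambda]_{\H}^2$, and similarly $m_0=\tfrac{s}{N}[U_1]_{\H}^2$. Applying Lemma~\ref{N_lambda_char} at $\lambda=0$ with the test vector $v_\lambda$ gives $m_0\leq\sup_{t\geq 0}J_0(tv_\lambda)$, and a one-variable optimization (maximizer at $t^{2_s^\ast-2}=\tau(v_\lambda)$) evaluates the supremum as $\tfrac{s}{N}\tau(v_\lambda)^{(N-2s)/(2s)}[v_\lambda]_{\H}^2$. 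Hence $m_\lambda\geq m_0\,\tau(v_\lambda)^{-(N-2s)/(2s)}\geq m_0(1+G(q)\lambda^\sigma)^{-(N-2s)/(2s)}$ using (1). To bring this into the form in the statement, I use the elementary inequality $(1+y)^{-a}\geq 1-ay(1+y)^a$ for $a,y>0$, which follows from $(1+y)^a-1\leq ay(1+y)^a$ (mean value theorem, distinguishing $a\geq 1$ and $0<a<1$) together with $1/(1+z)\geq 1-z$. Applying it with $a=(N-2s)/(2s)$ and using the crude bound $a\leq N/s$ delivers the stated inequality, with strict inequality automatic since $G(q)\lambda^\sigma>0$.
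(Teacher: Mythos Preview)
Your argument is correct, and in a couple of places cleaner than the paper's, but the route differs in both parts.

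For the upper bound in (1), the paper does \emph{not} substitute the Pohozaev relation \eqref{norm_2,q} first. Instead it bounds $\dfrac{\|v_\lambda\|_q^q-\|v_\lambda\|_2^2}{\|v_\lambda\|_{2_s^\ast}^{2_s^\ast}}\le x^{\theta_q}(1-x^{1-\theta_q})$ with $x=\|v_\lambda\|_2^2/\|v_\lambda\|_{2_s^\ast}^{2_s^\ast}$ (interpolation for the first term only) and then maximizes blindly over $x\ge 0$, using $\max_{x\ge 0}x^\theta(1-x^{1-\theta})=\theta^{\theta/(1-\theta)}(1-\theta)=G(q)$. Your approach inserts the exact identity $\|v_\lambda\|_2^2=c(q)\|v_\lambda\|_q^q$ before interpolating, reducing the problem to the scalar inequality $\tfrac{2_s^\ast}{q}(2/q)^{(2_s^\ast-q)/(q-2)}\le 1$, which you dispatch by concavity of $\log$. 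The two routes land on the same constant $G(q)$; the paper's is arguably more robust (it never uses Pohozaev for the upper bound), while yours makes the reason the constant is sharp more transparent.

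For (2), the paper tests with the dilation family $(v_\lambda)_t$ rather than $t v_\lambda$, writes $J_0((v_\lambda)_{t_\lambda})=J_\lambda((v_\lambda)_{t_\lambda})+\lambda^\sigma t_\lambda^N\bigl(\tfrac{1}{q}\|v_\lambda\|_q^q-\tfrac{1}{2}\|v_\lambda\|_2^2\bigr)$, bounds the first term by $m_\lambda$ and the bracket by $G(q)\|v_\lambda\|_{2_s^\ast}^{2_s^\ast}$, obtaining $m_0\le m_\lambda\bigl(1+\tfrac{N}{s}G(q)\lambda^\sigma\tau(v_\lambda)^{(N-2s)/2s}\bigr)$. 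Your direct computation $m_0\le \sup_{t}J_0(tv_\lambda)=m_\lambda\,\tau(v_\lambda)^{(N-2s)/2s}$ is in fact a sharper intermediate bound (the paper's follows from it via $(1+y)^a\le 1+ay(1+y)^a$ and $a<N/s$), and the final weakening with $a=(N-2s)/2s<N/s$ automatically supplies the strict inequality. So your proof of (2) is shorter and loses less along the way; the paper's detour through $J_\lambda$ has the side benefit that the inequality $m_0\le m_\lambda+\lambda^\sigma\tau^{N/2s}\bigl(\tfrac{1}{q}\|v_\lambda\|_q^q-\tfrac{1}{2}\|v_\lambda\|_2^2\bigr)$ is reused verbatim later in Lemma~\ref{norm_q}.
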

\begin{proof}\phantom{\qedhere}
    It's easy to show that $Q(q)\sim1$. Thus, $G(q)\sim (q-2)$.
    Since $v_\lambda\in\NN_\lambda$, using \eqref{tau_def}, we obtain
    \begin{equation*}
        \begin{aligned}\relax[v_\lambda]_{\H}^2&=\|v_\lambda\|_{2_s^\ast}^{2_s^\ast}+\lambda^\sigma (\|v_\lambda\|_q^q-\|v_\lambda\|_2^2).\\
        \tau(v_\lambda)&=1+\lambda^\sigma\frac{\|v_\lambda\|_q^q-\|v_\lambda\|_2^2}{\|v_\lambda\|_{2_s^\ast}^{2_s^\ast}}.
    \end{aligned}
    \end{equation*}
    Observe that $\frac{2(2_s^\ast-q)}{q(2_s^\ast-2)}<1$. Thus from \eqref{norm_2,q},
    $1<\tau(v_\lambda).$

    Using the interpolation inequality we can see that 
    \begin{equation}\label{4a}
    \frac{\|v_\lambda\|_q^q-\|v_\lambda\|_2^2}{\|v_\lambda\|_{2_s^\ast}^{2_s^\ast}}\leq \left(\frac{\|v_\lambda\|_2^2}{\|v_\lambda\|_{2_s^\ast}^{2_s^\ast}}\right)^{\theta_q}\left[1-\left(\frac{\|v_\lambda\|_2^2}{\|v_\lambda\|_{2_s^\ast}^{2_s^\ast}}\right)^{1-\theta_q}\right]\leq \theta_q^{\frac{\theta_q}{1-\theta_q}}(1-\theta_q)=G(q).
    \end{equation}
    where $\theta_q= \frac{2_s^\ast-q}{2_s^\ast-2}$. In the second inequality, we used the fact that if $\theta\in(0,1)$ $$\max_{x\geq0}x^\theta (1-x^{1-\theta})=\theta^{\frac{\theta}{1-\theta}}(1-\theta).$$
    This proves (1). Hence 
    \begin{equation}\label{tau_Gq_relation}
        \tau(v_\lambda)\leq 1+\lambda^\sigma G(q).
    \end{equation}
    Let $t_\lambda$ be the unique $t>0$ such that $(v_\lambda)_t\in\NN_0$. It's easy to check that $t_\lambda=(\tau(v_\lambda))^{\frac1{2s}}$. For the next part we observe that, by Lemma \ref{N_lambda_char},
    \begin{align}
        m_0=\sup_{t\geq0}J_0((U_\rho)_t)\leq \sup_{t\geq0}J_0((v_\lambda)_t)&=J_0((v_\lambda)_{t_\lambda})
        =J_\lambda((v_\lambda)_{t_\lambda})+\lambda^\sigma t_\lambda^N\left(\frac1q\|v_\lambda\|_q^q-\frac12\|v_\lambda\|_2^2\right)\nonumber\\
        &\leq m_\lambda+\lambda^\sigma (\tau(v_\lambda))^{\frac N{2s}}\left(\frac1q\|v_\lambda\|_q^q-\frac12\|v_\lambda\|_2^2\right).\label{4e}
    \end{align}
By \eqref{norm_2,q} and \eqref{4a},
\begin{equation*}
    \begin{aligned}
        \frac1q\|v_\lambda\|_q^q-\frac12\|v_\lambda\|_2^2\leq\frac12\left(\|v_\lambda\|_q^q-\|v_\lambda\|_2^2\right)\leq \|v_\lambda\|_q^q-\|v_\lambda\|_2^2\leq G(q) \|v_\lambda\|_{2_s^\ast}^{2_s^\ast}.
    \end{aligned}
\end{equation*}
By Corollary \ref{Pohozaev_corollary}, $m_\lambda=\frac sN [v_\lambda]_{\H}^2.$
Hence, from \eqref{4e}, it follows 
\begin{equation}
\begin{aligned}
m_0& \leq m_\lambda+\lambda^\sigma(\tau(v_\lambda))^\frac{N}{2s}G(q)\|v_\lambda\|_{2^*_s}^{2^*_s}
\leq m_\lambda+\lambda^\sigma(\tau(v_\lambda))^\frac{(N-2s)}{2s}G(q)[v_\lambda]^2_{\H}\\&\leq 
    m_\lambda\left[1+\lambda^\sigma \frac{N}{s}G(q)(1+G(q)\lambda^\sigma)^{\frac{N-2s}{2s}}\right],
\end{aligned}
\end{equation}
and 
\begin{equation*}
m_\lambda\geq \frac{m_0}{1+\lambda^\sigma \frac{N}{s}G(q)(1+G(q)\lambda^\sigma)^{\frac{N-2s}{2s}}}>m_0\left[1-\lambda^\sigma \frac{N}{s}G(q)(1+G(q)\lambda^\sigma)^{\frac{N-2s}{2s}}\right].\tag*{\qed}\end{equation*}
\end{proof}
Define $g_0(\rho):=\frac1q\|U_\rho\|_q^q-\frac12\|U_\rho\|_2^2$ for $\rho>0$. Then,
    \begin{equation*}g_0'(\rho)=0\iff \rho=\left(\frac{2(2_s^\ast-q)\|U_1\|_q^q}{q(2_s^\ast-2)\|U_1\|_2^2}\right)^{\frac{2_s^\ast-2}{2s(q-2)}}.\end{equation*}
    Since $g_0(\rho)$ is positive for small $\rho$ and negative for large $\rho$, there exists a unique $\rho_0>0$ given by $$\rho_0:=\left(\frac{2(2_s^\ast-q)\|U_1\|_q^q}{q(2_s^\ast-2)\|U_1\|_2^2}\right)^{\frac{2_s^\ast-2}{2s(q-2)}}$$ such that 
    \begin{equation}
    g_0(\rho_0)=\sup_{\rho>0}g_0(\rho)=\frac1q\left(\frac 2q\right)^{\frac{2_s^\ast-q}{q-2}}G(q)\left(\frac{\|U_1\|_q^{q(2_s^\ast-2)}}{\|U_1\|_2^{2(2_s^\ast-q)}}\right)^{\frac1{q-2}}.
    \end{equation}
Observe that 
\begin{equation*}
    \begin{aligned}
        \lim_{q\to2}\left(\frac{\|U_1\|_q^{q(2_s^\ast-2)}}{\|U_1\|_2^{2(2_s^\ast-q)}}\right)^{\frac1{q-2}}&=\exp\left(\lim_{q\to2}\frac{(2_s^\ast-2)\ln\left(\int_{\RR}|U_1|^q\,dx\right)-(2_s^\ast-q)\ln\left(\int_{\RR}|U_1|^2\,dx\right)}{q-2}\right)\\&=\exp\left((2_s^\ast-2)\lim_{q\to2}\frac{\ln\left(\int_{\RR}|U_1|^q\,dx\right)-\ln\left(\int_{\RR}|U_1|^2\,dx\right)}{q-2}\right)\int_{\RR}|U_1|^2\,dx.
    \end{aligned}
\end{equation*}
Using DCT, and the fact that $N>4s$, we can prove $\int_{\RR}|U_1|^q\,dx$ is a dif{}ferentiable function in $q$.
\begin{equation*}
\lim_{h\to0}\frac{\int_{\RR}|U_1|^{q+h}-|U_1|^h\,dx}{h}=\lim_{h\to0}\int_{\RR}|U_1|^q\frac{|U_1|^h-1}{h}\,dx=\int_{\RR}|U_1|^q\ln|U_1|\,dx.
\end{equation*}

Then, $\ln\left(\int_{\RR}|U_1|^q\,dx\right)$ is a dif{}ferentiable function of of $q$. So by chain rule,
\begin{equation*}
    \begin{aligned}
        \lim_{q\to2}\left(\frac{\|U_1\|_q^{q(2_s^\ast-2)}}{\|U_1\|_2^{2(2_s^\ast-q)}}\right)^{\frac1{q-2}}&=\exp\left((2_s^\ast-2)\frac{\int_{\RR}|U_1|^2\ln|U_1|\,dx}{\int_{\RR}|U_1|^2\,dx}\right)\int_{\RR}|U_1|^2\,dx.
    \end{aligned}
\end{equation*}
We conclude that $$c_0:=\inf_{q\in(2,2_s^\ast)}\left(\frac{\|U_1\|_q^{q(2_s^\ast-2)}}{\|U_1\|_2^{2(2_s^\ast-q)}}.\right)^{\frac1{q-2}}>0,$$
and 
\begin{equation}\label{g_0rho_0}
    g_0(\rho_0)\geq \frac{c_0}{q}\left(\frac 2q\right)^{\frac{2_s^\ast-q}{q-2}}G(q).
\end{equation}
\begin{lemma}\label{lemma_4c}
    Let $N>4s$. There exists $c_0>0$ independent of $q,\lambda$ such that for all small $\lambda>0,$
    $$m_\lambda<m_0-\lambda^\sigma\left(\frac{c_0}{q}\left(\frac2q\right)^{\frac{2_s^\ast-q}{q-2}}G(q)-\lambda^\sigma\frac{2Nm_0}{s(q-2)}G(q)^2\right).$$
\end{lemma}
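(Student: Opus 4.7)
The plan is to obtain an upper bound on $m_\lambda$ by testing the variational characterization
$m_\lambda=\inf_{v\in\H\setminus\{0\}}\sup_{t\geq0}J_\lambda(v_t)$ from Lemma \ref{N_lambda_char} against the Talenti extremizer $U_{\rho_0}$. Since the homogeneous seminorm and the $L^{2_s^\ast}$-norm are both dilation-invariant along the family $U_\rho$, one has $[U_{\rho_0}]_{\H}^2=\|U_{\rho_0}\|_{2_s^\ast}^{2_s^\ast}=:S$, and a direct substitution yields
\begin{equation*}
J_\lambda\!\bigl((U_{\rho_0})_t\bigr) \;=\; \tfrac{S}{2}t^{N-2s} \;-\; \tfrac{S}{2_s^\ast}t^N \;-\; \lambda^\sigma g_0(\rho_0)\,t^N.
\end{equation*}
This one-variable expression is maximized explicitly at $t_\ast^{2s}=\tfrac{(N-2s)S}{2N(S/2_s^\ast+\lambda^\sigma g_0(\rho_0))}$; using the identity $m_0=\tfrac{s}{N}S$ (Pohozaev applied to $U_1$), the supremum equals
\begin{equation*}
m_\lambda \;\le\; m_0\Bigl(1 + \tfrac{2_s^\ast\lambda^\sigma g_0(\rho_0)}{S}\Bigr)^{-\frac{N-2s}{2s}}.
\end{equation*}

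Next I will Taylor-expand using the elementary inequality $(1+x)^{-a}\le 1-ax+\tfrac{a(a+1)}{2}x^2$ valid for $x\ge0$, $a\ge 0$, applied with $a=(N-2s)/(2s)$. The factors involving $a$, $2_s^\ast$ and $S$ collapse by means of $\tfrac{(N-2s)\,2_s^\ast}{2s}=\tfrac{N}{s}$ and $\tfrac{N}{sS}m_0=1$, so that the linear correction is exactly $-\lambda^\sigma g_0(\rho_0)$ and the remainder takes the form
\begin{equation*}
m_\lambda \;\le\; m_0 \;-\; \lambda^\sigma g_0(\rho_0) \;+\; \tfrac{N^2}{2s(N-2s)S}\,\lambda^{2\sigma}\,g_0(\rho_0)^2.
\end{equation*}
To recover the leading term of the claim I substitute the lower bound \eqref{g_0rho_0} for $g_0(\rho_0)$, which precisely produces the factor $\tfrac{c_0}{q}\bigl(\tfrac{2}{q}\bigr)^{(2_s^\ast-q)/(q-2)}G(q)$.

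For the quadratic correction I need a matching upper bound $g_0(\rho_0)\lesssim G(q)$, and I will derive it by solving the critical point equation $g_0'(\rho_0)=0$ explicitly. Writing $\alpha_q=N-(N-2s)q/2=(N-2s)(2_s^\ast-q)/2$, the condition $\tfrac{\alpha_q}{q}\rho_0^{\alpha_q-1}\|U_1\|_q^q=s\rho_0^{2s-1}\|U_1\|_2^2$ lets one substitute back into the definition of $g_0$ to obtain
\begin{equation*}
g_0(\rho_0) \;=\; \rho_0^{2s}\,\|U_1\|_2^2\cdot\tfrac{q-2}{2(2_s^\ast-q)}.
\end{equation*}
The hypothesis $N>4s$ guarantees $\|U_1\|_2<\infty$, and together with the bounds on $\rho_0$, $\|U_1\|_q$ from Remark \ref{remark_estimate}, this shows $g_0(\rho_0)$ is of order $q-2\sim G(q)$, so that $\tfrac{N^2}{2s(N-2s)S}g_0(\rho_0)^2$ is absorbed into the coefficient $\tfrac{2Nm_0}{s(q-2)}G(q)^2=\tfrac{2S}{q-2}G(q)^2$ after adjusting constants.

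The main obstacle is the bookkeeping required to track absolute constants uniformly as $q$ ranges over $(2,2_s^\ast)$: both endpoints must be controlled because $G(q)\to 0$ at $q=2$ and the factor $(2_s^\ast-q)^{-1}$ blows up at $q=2_s^\ast$, yet the ratio $g_0(\rho_0)/G(q)$ must remain bounded so that the coefficient $\tfrac{2Nm_0}{s(q-2)}$ stated in the lemma is sufficient. Remark \ref{remark_estimate} and the explicit formula for $g_0(\rho_0)$ above will handle this, and the strict inequality in the conclusion will follow from the strictness of the Taylor estimate once $\lambda$ is small enough that the quadratic term does not cancel the linear one.
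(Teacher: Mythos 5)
Your route is correct in outline and genuinely different in its mechanics from the paper's. The paper also tests against $U_{\rho_0}$, but along the multiplicative ray $t\mapsto tU_{\rho_0}$: it locates the Nehari parameter $\tilde t_\lambda$, traps it in the interval $\big([1-\lambda^\sigma A_\lambda]^{1/(q-2)},1\big)$, and then runs a monotonicity plus mean-value argument on $g(t)=\tfrac{t^2}{2}\|U_0\|_2^2-\tfrac{t^q}{q}\|U_0\|_q^q$ to extract the second-order expansion. You instead use the dilation ray $t\mapsto (U_{\rho_0})_t$ from Lemma~\ref{N_lambda_char}, for which the supremum is available in closed form; I checked your algebra and indeed
\begin{equation*}
\sup_{t\ge0}J_\lambda\bigl((U_{\rho_0})_t\bigr)=m_0\Bigl(1+\tfrac{2_s^\ast\lambda^\sigma g_0(\rho_0)}{S}\Bigr)^{-\frac{N-2s}{2s}},\qquad S:=[U_1]_{\H}^2=\tfrac{Nm_0}{s},
\end{equation*}
and the Taylor inequality $(1+x)^{-a}\le 1-ax+\tfrac{a(a+1)}{2}x^2$ with $a=\tfrac{N-2s}{2s}$ reproduces the linear term $-\lambda^\sigma g_0(\rho_0)$ exactly and the quadratic coefficient $\tfrac{N^2}{2s(N-2s)S}$. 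Together with \eqref{g_0rho_0} this gives the leading term of the claim. This is a cleaner computation than the paper's tracking of $\tilde t_\lambda$, and your closed form $g_0(\rho_0)=\rho_0^{2s}\|U_1\|_2^2\tfrac{q-2}{2(2_s^\ast-q)}$ is also correct.

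The one step that does not close as written is the quadratic remainder. The coefficient $\tfrac{2Nm_0}{s(q-2)}=\tfrac{2S}{q-2}$ in the statement is fixed — only $c_0$, which sits in the \emph{linear} term, is at your disposal — so ``$g_0(\rho_0)$ is of order $q-2$, absorbed after adjusting constants'' is not sufficient: you must verify the specific inequality $\tfrac{N^2}{2s(N-2s)S}\,g_0(\rho_0)^2\le\tfrac{2S}{q-2}G(q)^2$. Your exact formula plus Remark~\ref{remark_estimate} only yields $g_0(\rho_0)\le C(q-2)$ with an unquantified $C$, which cannot be compared against the fixed constant. The gap is repairable with a tool already in the paper: the interpolation estimate \eqref{4a} applied to $U_{\rho_0}$ gives $\|U_{\rho_0}\|_q^q-\|U_{\rho_0}\|_2^2\le G(q)\|U_{\rho_0}\|_{2_s^\ast}^{2_s^\ast}=G(q)S$, hence $g_0(\rho_0)\le\tfrac12\bigl(\|U_{\rho_0}\|_q^q-\|U_{\rho_0}\|_2^2\bigr)\le\tfrac{S}{2}G(q)$, so the quadratic term is at most $\tfrac{N^2S}{8s(N-2s)}G(q)^2$; since $q-2<2_s^\ast-2=\tfrac{4s}{N-2s}$ and $N>4s$ forces $N^2<4(N-2s)^2$, one gets $\tfrac{N^2S}{8s(N-2s)}<\tfrac{2S}{2_s^\ast-2}<\tfrac{2S}{q-2}$, which is exactly what is needed. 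With that substitution your argument is complete (strictness follows since the Taylor inequality is strict for $x>0$).
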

\begin{proof}
Define \begin{equation}\label{U-0}
U_{0}(x):=U_{\rho_0}(x)={\rho_0}^{-\frac{N-2s}2}U_1(x/\rho_0).\end{equation} 
Let $\Tilde{t}_\lambda$ be the unique $t>0$ such that $tU_0\in\NN_\lambda$. As done in the previous lemma,
\begin{align}
    m_{\lambda}&\leq \sup_{t\geq0} J_{\lambda}(tU_0)= J_{\lambda}(\Tilde{t}_\lambda U_0)\nonumber\\&\leq m_0+\lambda^\sigma \left(\frac{\Tilde{t}_\lambda^2}2\|U_0\|_2^2-\frac{\Tilde{t}_\lambda^q}q\|U_0\|_q^q\right).\label{4d}
\end{align}

By Lemma \ref{N_lambda_char}, $\frac{d}{dt}J_\lambda(tU_0)\mid_{\Tilde{t}_\lambda}=0$. That implies,
$$\frac{Nm_0}s+\lambda^\sigma\|U_0\|_2^2= \Tilde{t}_\lambda^{2_s^\ast-2} \frac{Nm_0}s+\lambda^\sigma \Tilde{t}_\lambda^{q-2}\|U_0\|_q^q. $$
If $\Tilde{t}_\lambda\geq1$ then $\Tilde{t}_\lambda^{2_s^\ast-2} \geq \Tilde{t}_\lambda^{q-2}$ and $$\frac{Nm_0}s+\lambda^\sigma\|U_0\|_2^2> \Tilde{t}_\lambda^{q-2} \left(\frac{Nm_0}s+\lambda^\sigma \|U_0\|_q^q\right). $$
We recall that $g_0(\rho_0)=\frac1q\|U_0\|_q^q-\frac12\|U_0\|_2^2 >0$. Hence $\|U_0\|_q^q>\|U_0\|_2^2$. Therefore, $$\Tilde{t}_\lambda\leq \left(\frac{\frac{Nm_0}s+\lambda^\sigma \|U_0\|_2^2}{\frac{Nm_0}s+\lambda^\sigma \|U_0\|_q^q}\right)^{\frac1{q-2}}<1,$$
which is a contradiction. Therefore, $\Tilde{t}_\lambda<1$. Thus, 
$$\left(\frac{\frac{Nm_0}s+\lambda^\sigma \|U_0\|_2^2}{\frac{Nm_0}s+\lambda^\sigma \|U_0\|_q^q}\right)^{\frac1{q-2}}<\Tilde{t}_\lambda<1.$$
Let $$A_\lambda:=\frac{\|U_0\|_q^q-\|U_0\|_2^2}{\frac{Nm_0}s+\lambda^\sigma \|U_0\|_q^q}.$$
Then $A_\lambda<\frac s{Nm_0}\left(\|U_0\|_q^q-\|U_0\|_2^2\right)$ and because $\|U_0\|_q^q>\|U_0\|_2^2$, $$\left(\frac{\|U_0\|_2^2}{\|U_0\|_q^q}\right)^{\frac1{q-2}}<[1-\lambda^\sigma A_\lambda]^{\frac{1}{(q-2)}}<\Tilde{t}_\lambda<1.$$
Define $g(t):=\frac{t^2}2\|U_0\|_2^2-\frac{t^q}q\|U_0\|_q^q$ on $[0,1]$. Then $g$ has a unique maximum at $t_0=\left(\frac{\|U_0\|_2^2}{\|U_0\|_q^q}\right)^{\frac1{q-2}}$ and strictly decreasing in $(t_0,1)$.
We know that $[U_0]_{\H}^2=\|U_0\|_{2_s^\ast}^{2_s^\ast}$ and $m_0=\frac12[U_0]_{\H}^2-\frac1{2_s^\ast}\|U_0\|_{2_s^\ast}^{2_s^\ast}$. Thus, $[U_0]_{\H}^2=\|U_0\|_{2_s^\ast}^{2_s^\ast}=\frac{Nm_0}s.$ Hence  $g(\Tilde{t}_\lambda)<g([1-\lambda^\sigma A_\lambda]^{\frac{1}{(q-2)}}).$
Define for $t\in[0,1]$, $h(t):= g([1-t]^{\frac1{q-2}}).$ Then for small $t>0,$
$$h'(t)=\frac1{q-2}(1-t)^{\frac{4-q}{q-2}}\left((1-t)\|U_0\|_q^q-\|U_0\|_2^2\right)>0.$$
Thus for small $\lambda>0$, by the monotonicity of $g$ in $(t_0,1)$, $$g(\Tilde{t}_\lambda)<h(\lambda^\sigma A_\lambda)=\frac12\|U_0\|_2^2-\frac1q\|U_0\|_q^q+h'(\xi)\lambda^\sigma A_\lambda$$ for some $\xi\in(0,\lambda^\sigma A_\lambda).$ 
For small $\lambda>0,$ $$h'(\xi)\leq \frac{2}{q-2}(\|U_0\|_q^q-\|U_0\|_2^2).$$
Similar to \eqref{4a} we can see that 
$$\frac{\|U_0\|_q^q-\|U_0\|_2^2}{\|U_0\|_{2_s^\ast}^{2_s^\ast}}\leq G(q).$$
Hence, 
\begin{align*}
    g(\Tilde{t}_\lambda)&< \frac12\|U_0\|_2^2-\frac1q\|U_0\|_q^q+\frac{2s\lambda^\sigma}{Nm_0(q-2)}(\|U_0\|_q^q-\|U_0\|_2^2)^2\\
    &=-g(\rho_0)+\frac{2s\lambda^\sigma}{Nm_0(q-2)}\left(\frac{\|U_0\|_q^q-\|U_0\|_2^2}{\|U_0\|_{2_s^\ast}^{2_s^\ast}}\frac{Nm_0}s\right)^2\\
    &\leq-g(\rho_0)+\lambda^\sigma \frac{2Nm_0}{s(q-2)}(G(q))^2.
\end{align*}
Substituting this in \eqref{4d} and using \eqref{g_0rho_0}, we conclude the proof.
\end{proof}
\begin{corollary}\label{m_0m_lambda}
    Let $N>4s$. Then for small $\lambda>0$, $$m_0-m_\lambda\sim \lambda^\sigma.$$
\end{corollary}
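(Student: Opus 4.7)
The plan is simply to combine the estimates already established in Lemma~\ref{lemma_4b}(2) and Lemma~\ref{lemma_4c}, since both give two-sided bounds on $m_0 - m_\lambda$ of the exact order $\lambda^\sigma$ once $q \in (2,2_s^\ast)$ is treated as fixed. Under this assumption, all the $q$-dependent quantities appearing in the two lemmas, namely $G(q)$, $(\tfrac{2}{q})^{\frac{2_s^\ast-q}{q-2}}$, $c_0/q$, and $\frac{2Nm_0}{s(q-2)}$, are positive constants independent of $\lambda$.

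For the upper bound on $m_0 - m_\lambda$, I would rearrange Lemma~\ref{lemma_4b}(2) into
\begin{equation*}
m_0 - m_\lambda \;\leq\; m_0\,\lambda^\sigma\,\tfrac{N}{s}\,G(q)\bigl(1 + G(q)\lambda^\sigma\bigr)^{\frac{N-2s}{2s}}.
\end{equation*}
For $\lambda$ small enough that $G(q)\lambda^\sigma \leq 1$, the bracketed factor is bounded above by $2^{(N-2s)/(2s)}$, so the whole right-hand side is $\lesssim \lambda^\sigma$. This gives $m_0 - m_\lambda \lesssim \lambda^\sigma$.

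For the matching lower bound, I would rewrite Lemma~\ref{lemma_4c} as
\begin{equation*}
m_0 - m_\lambda \;\geq\; \lambda^\sigma\left(\tfrac{c_0}{q}\bigl(\tfrac{2}{q}\bigr)^{\frac{2_s^\ast-q}{q-2}}G(q) \;-\; \lambda^\sigma\,\tfrac{2Nm_0}{s(q-2)}\,G(q)^2\right).
\end{equation*}
The first term inside the parentheses is a strictly positive constant $C_1 = C_1(N,s,q) > 0$, while the second term tends to $0$ as $\lambda \to 0$. Hence there exists $\lambda_1 > 0$ such that for $0 < \lambda \leq \lambda_1$ the parenthesized expression is at least $C_1/2$, yielding $m_0 - m_\lambda \gtrsim \lambda^\sigma$.

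No substantial obstacle is expected in this step, since the two preceding lemmas already encode the variational comparisons (testing $J_0$ against the rescaled family $(v_\lambda)_{t_\lambda}$ for the upper bound on $m_\lambda$, and testing $J_\lambda$ against dilations $\tilde{t}_\lambda U_0$ for the lower bound) that produce exactly the order $\lambda^\sigma$. Combining the two one-sided estimates, with $q$ fixed, immediately yields $m_0 - m_\lambda \sim \lambda^\sigma$, which is the conclusion of the corollary.
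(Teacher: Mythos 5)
Your proposal is correct and follows exactly the paper's route: the paper's proof of this corollary is literally ``combine Lemma~\ref{lemma_4b} and Lemma~\ref{lemma_4c}'', and your rearrangements of Lemma~\ref{lemma_4b}(2) into the upper bound $m_0-m_\lambda\lesssim\lambda^\sigma$ and of Lemma~\ref{lemma_4c} into the lower bound $m_0-m_\lambda\gtrsim\lambda^\sigma$ (absorbing the $(1+G(q)\lambda^\sigma)^{(N-2s)/(2s)}$ factor and the $O(\lambda^{2\sigma})$ correction for small $\lambda$, with $q$ fixed) are precisely the omitted details.
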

\begin{proof}
Combining Lemma~\ref{lemma_4b} and Lemma~\ref{lemma_4c}, the proof follows. 
\end{proof}

\begin{lemma}\label{norm_q}
    Assume $N>4s$. Then for small $\lambda>0$,
    $$\frac{2q}{2_s^\ast-2}Q(q)m_0\geq \|v_{\lambda}\|_q^q\geq Q(q)\left(\frac{c_0}{q}\left(\frac{2_s^\ast}{q}\right)^{\frac{2_s^\ast-q}{q-2}}-\lambda^\sigma\frac{2Nm_0}{s}\frac{Q(q)}{2_s^\ast-2}\right)\frac{q}{(\tau(v_\lambda))^{\frac N{2s}}}.$$
    In particular, 
    \begin{equation}\label{v-lam-nonzero}
    \|v_\lambda\|_2^2\sim 2^\ast-q,\quad \|v_\lambda\|_q^q\sim 1.
    \end{equation}
\end{lemma}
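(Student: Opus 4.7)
The plan is to prove the two-sided bound in two separate steps, and then to read off the $\sim$ relations by letting $\lambda\to 0$. The ingredients already available are (i) the identity $\|v_\lambda\|_2^2=\frac{2(2_s^\ast-q)}{q(2_s^\ast-2)}\|v_\lambda\|_q^q$ from \eqref{norm_2,q}, (ii) the interpolation bound $\frac{\|v_\lambda\|_q^q-\|v_\lambda\|_2^2}{\|v_\lambda\|_{2_s^\ast}^{2_s^\ast}}\leq G(q)$ established in Lemma~\ref{lemma_4b}, (iii) the inequality \eqref{4e} comparing $m_0$ with $m_\lambda$, (iv) the upper estimate on $m_\lambda$ from Lemma~\ref{lemma_4c}, and (v) the fact $m_\lambda=\tfrac{s}{N}[v_\lambda]_{H^s}^2$ coming from Nehari and Pohozaev. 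I will use them all.

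\textbf{Upper bound.} Plugging \eqref{norm_2,q} into the interpolation inequality (ii), a direct algebraic simplification gives $\|v_\lambda\|_q^q\cdot\frac{2_s^\ast(q-2)}{q(2_s^\ast-2)}\leq G(q)\|v_\lambda\|_{2_s^\ast}^{2_s^\ast}$. Using $G(q)=\frac{q-2}{2_s^\ast-2}Q(q)$, this collapses to $\|v_\lambda\|_q^q\leq\frac{qQ(q)}{2_s^\ast}\|v_\lambda\|_{2_s^\ast}^{2_s^\ast}$. Next I use $\|v_\lambda\|_{2_s^\ast}^{2_s^\ast}\leq[v_\lambda]_{H^s(\RR)}^2=\frac{N}{s}m_\lambda<\frac{N}{s}m_0$, which together with the identity $\frac{N}{s\cdot 2_s^\ast}=\frac{2}{2_s^\ast-2}$ yields the stated upper bound $\|v_\lambda\|_q^q\leq\frac{2q}{2_s^\ast-2}Q(q)m_0$.

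\textbf{Lower bound.} Starting from the inequality \eqref{4e}, namely
\[
m_0\le m_\lambda+\lambda^\sigma(\tau(v_\lambda))^{N/(2s)}\Big(\tfrac{1}{q}\|v_\lambda\|_q^q-\tfrac{1}{2}\|v_\lambda\|_2^2\Big),
\]
I substitute \eqref{norm_2,q} to reduce the bracket to $\frac{q-2}{q(2_s^\ast-2)}\|v_\lambda\|_q^q$, and then solve for $\|v_\lambda\|_q^q$ to obtain
\[
\|v_\lambda\|_q^q\ge\frac{q(2_s^\ast-2)(m_0-m_\lambda)}{\lambda^\sigma(\tau(v_\lambda))^{N/(2s)}(q-2)}=\frac{qQ(q)(m_0-m_\lambda)}{\lambda^\sigma(\tau(v_\lambda))^{N/(2s)}G(q)}.
\]
Now I insert the lower bound on $m_0-m_\lambda$ provided by Lemma~\ref{lemma_4c}; using once more $\frac{G(q)}{q-2}=\frac{Q(q)}{2_s^\ast-2}$ to rewrite the second term, after cancellation of $G(q)$ the resulting expression matches the stated lower bound. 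The main obstacle of the proof is really packaged into Lemma~\ref{lemma_4c}: the cancellation of the unfavorable factor $\frac{1}{G(q)}\sim\frac{1}{q-2}$ in front of $(m_0-m_\lambda)$ only works because Lemma~\ref{lemma_4c} provides an estimate of the form $m_0-m_\lambda\gtrsim\lambda^\sigma G(q)(\text{constant})$.

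\textbf{The ``in particular'' statement.} Since $q\in(2,2_s^\ast)$ is fixed, $Q(q)$, $G(q)$, $(2/q)^{(2_s^\ast-q)/(q-2)}$ and $c_0$ are positive constants independent of $\lambda$. Moreover the bound $\tau(v_\lambda)\leq 1+\lambda^\sigma G(q)$ from Lemma~\ref{lemma_4b}(1) together with $\tau(v_\lambda)>1$ shows $\tau(v_\lambda)^{N/(2s)}\to 1$ as $\lambda\to 0$. Hence for $\lambda$ small the leading term on the right of the lower bound is a positive constant and the subleading $O(\lambda^\sigma)$ correction is negligible; combined with the upper bound this gives $\|v_\lambda\|_q^q\sim 1$. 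The relation $\|v_\lambda\|_2^2\sim 2_s^\ast-q$ then follows directly from \eqref{norm_2,q}.
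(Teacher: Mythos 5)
Your proof is correct and follows essentially the same route as the paper: the lower bound is obtained exactly as in the paper's proof from \eqref{4e}, \eqref{norm_2,q} and Lemma~\ref{lemma_4c}, and your upper bound, phrased via \eqref{4a} together with $\|v_\lambda\|_{2_s^\ast}^{2_s^\ast}\le[v_\lambda]_{\H}^2=\frac{N}{s}m_\lambda<\frac{N}{s}m_0$ instead of the paper's identity $\lambda^\sigma\frac{q-2}{2q}\|v_\lambda\|_q^q=\frac{\tau(v_\lambda)-1}{\tau(v_\lambda)}m_\lambda$, is the same computation reorganized. (As a minor aside, your derivation — like the paper's own — produces the factor $\left(\frac{2}{q}\right)^{\frac{2_s^\ast-q}{q-2}}$ from Lemma~\ref{lemma_4c} rather than the $\left(\frac{2_s^\ast}{q}\right)^{\frac{2_s^\ast-q}{q-2}}$ printed in the lemma statement; that discrepancy lies in the paper and does not affect the conclusion.)
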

\begin{proof}
    By $v_\lambda\in\NN_\lambda$, Corollary \ref{Pohozaev_corollary} and \eqref{norm_2,q}
    $$m_\lambda=\frac{s}{N}[v_\lambda]_{\H}^2=\frac sN\|v_\lambda\|_{2_s^\ast}^{2_s^\ast}+\lambda^\sigma\frac{q-2}{2q}\|v_\lambda\|_q^q.$$
Thus, $$\lambda^\sigma\frac{q-2}{2q}\|v_\lambda\|_q^q=\frac{\tau(v_\lambda)-1}{\tau(v_\lambda)}m_\lambda.$$
Therefore, using Lemma \ref{lemma_4b} and $m_\lambda<m_0,$
$$\lambda^\sigma\frac{q-2}{2q}\|v_\lambda\|_q^q<\lambda^\sigma G(q)m_0.$$
Substituting $\frac{G(q)}{q-2} =\frac{Q(q)}{2^*_s-2}$ from \eqref{4f} into the above relation, we obtain the LHS estimate of this lemma.
Using \eqref{norm_2,q} in \eqref{4e}, we obtain,
$$m_0\leq m_\lambda+\lambda^\sigma(\tau(v_\lambda))^{\frac{N}{2s}}\frac{q-2}{q(2_s^\ast-2)}\|v_\lambda\|_q^q.$$
This led us 
\begin{align*}
    \|v_\lambda\|_q^q&\geq \frac{(m_0-m_\lambda)q(2_s^\ast-2)}{\lambda^\sigma (\tau(v_\lambda))^{\frac{N}{2s}}(q-2)}\\&>\frac{q(2_s^\ast-2)}{(\tau(v_\lambda))^{\frac{N}{2s}}} \left(\frac{c_0}q\left(\frac2q\right)^{\frac{2_s^\ast-q}{q-2}}\frac{G(q)}{q-2}-\lambda^\sigma\frac{2Nm_0}{s}\left(\frac{G(q)}{q-2}\right)^2\right).\quad(\text{by Lemma }\ref{lemma_4c})
\end{align*}
Substituting $\frac{G(q)}{q-2} =\frac{Q(q)}{2^*_s-2}$ from \eqref{4f} into the above relation, we obtain the RHS estimate of this lemma.
\end{proof}
By Lemma \ref{rescaling_lemma} and the above lemma, $$\|u_\lambda\|_2^2\sim(2_s^\ast-q)\lambda^\sigma, \quad\|u_\lambda\|_q^q\sim\lambda^{\sigma-1}=\lambda^\frac{2^*_s-q}{q-2}.$$
Using Corollaries \ref{m_0m_lambda}, \ref{Pohozaev_corollary} and Lemma \ref{rescaling_lemma}, we get $[u_\lambda]_{\H}^2\sim1.$ 
Again since, $[v_\lambda]_{\H}^2=\|v_\lambda\|_{2_s^\ast}^{2_s^\ast}+c\lambda^\sigma\|v_\lambda\|_q^q$, $\|u_\lambda\|_{2_s^\ast}^{2_s^\ast}\sim1.$ Hence, \eqref{asymp-ii} follows.  Once we prove $v_\lambda\to U_0$ in $C^{2s-\delta}(\RR)$ it's obvious that $u_\lambda(0)\sim\lambda^{\frac{1}{q-2}}.$ Using Corollaries \ref{Pohozaev_corollary} and \ref{m_0m_lambda}, we get the convergence rate \eqref{conv_rate}.

\begin{proof}[\bf {Proof of Theorem \ref{Asymp_1} for $N>4s$}:] From the discussion following Lemma \ref{v_lambda_bdd}, there exists $v_0\in H_r^s(\RR)$ satisfying the critical Emden-Fowler equation $\fra v=v^{2_s^\ast-1}$ such that,
\begin{equation}
    v_\lambda\rightharpoonup v_0\quad\text{in }\H,\quad v_\lambda\to v_0\quad\text{in } L^p(\RR)\quad \forall\, p\in(2,2_s^\ast)
\end{equation}
and 
\begin{equation}
    v_\lambda(x)\to v_0(x)\quad \text{a.e. }\RR,\quad v_\lambda\to v_0\quad\text{in } L_{\text{loc}}^2(\RR).
\end{equation}
\noindent\textbf{Step-1: $\dot{H}^s$ convergence.} Now using Lemma \ref{norm_q} and Corollary \ref{m_0m_lambda}, $$J_0(v_\lambda)=J_\lambda(v_\lambda)+\lambda^\sigma\left(\frac1q\|v_\lambda\|_q^q-\frac12\|v_\lambda\|_2^2\right)=m_\lambda+o(1)=m_0+o(1)$$
and 
$$J_0'(v_\lambda)v=J_\lambda'(v_\lambda)v+\lambda^\sigma \int_{\RR}|v_\lambda|^{q-2}v_\lambda v\,dx-\lambda^\sigma \int_{\RR}v_\lambda v\,dx=o(1).$$
Therefore, $\{v_\lambda\}$ is a PS sequence for $J_0$ at the level $m_0$. Further, by \eqref{v-lam-nonzero} weak limit of $v_\lambda$ has to be non-zero. Hence, by Lemma \ref{PS_theorem_2}  
\begin{align*}
    v_{\lambda}&=U_\rho+\sum_{j=1}^k (R_{\lambda}^{(j)})^{-\frac{N-2s}2}v^{(j)}\left(\frac{x-x_{\lambda}^{(j)}}{R_{\lambda}^{(j)}}\right)+o(1)\quad\text{in }\HHH.\\
    m_0&=J_0(U_\rho)+\sum_{j=1}^kJ_0(v^{(j)}),
\end{align*}
where $U_\rho$ is a unique positive solution of $\fra v=v^{2_s^\ast-1}$ of the form \eqref{Urho}. Since $v^{(j)}$ are non trivial solutions of the limit equation $\fra v=v^{2_s^\ast-1}$, $J_0(v^{(j)})\geq m_0$ and also notice that $J_0(U_\rho)\geq0.$
By Lemma \ref{norm_q}, $U_\rho\neq0$. Thus, $k=0$ and $v_\lambda\to U_\rho$ in $\HHH$ for some $\rho>0$.

\noindent\textbf{Step-2: $C^{2s-\delta}$ convergence.} Notice that for $\lambda\leq1$, $v_\lambda$ is a subsolution of $$\fra v=v^{2_s^\ast-1}+v^{q-1}.$$ Since the coefficients of the above equation are independent of $\lambda$, following the arguments of \cite[Proposition~5.1.1]{DMV}, we can argue that any positive subsolution $v$ of the above equation satisfies $$\sup_{\RR}v(x)\leq C\|v\|_{2_s^\ast}^{\gamma}$$ for some $\gamma>0$ and $C>0$ independent of $\lambda$. Thus $\{v_\lambda\}_{\lambda\leq1}$ is uniformly bounded in $L^{\infty}(\RR)$ w.r.t. $\lambda\in (0,1]$. Let $r>2$ and choose $2<t<\min(r,2_s^\ast)$. Then as $\lambda\to0$, $$\int_{\RR}|v_\lambda-U_\rho|^r\,dx\leq \|v_\lambda-U_\rho\|_{\infty}^{r-t}\int_{\RR}|v_\lambda-U_\rho|^t\,dx\leq C\int_{\RR}|v_\lambda-U_\rho|^t\,dx\to0,$$
where for the last limit, we have used Lemma~\ref{Strauss_compactness_lemma}. Thus $v_\lambda\to U_\rho$ in $L^p(\RR)$ for all $p\in(2,\infty)$. Therefore, in particular, $v_\lambda$ is uniformly bounded in $L^p$ for $p\in(2,\infty)$. 

Now choosing $r>\frac N{2s}$ large enough such that $r(q-1),r>2$, for $\lambda\leq1$, we can obtain
\begin{equation*}
    \|\fra v_\lambda-\fra U_\rho\|_r\leq \|v_\lambda^{2_s^\ast-1}-U_\rho^{2_s^\ast-1}\|_r+\lambda^\sigma\|v_\lambda^{q-1}-v_\lambda\|_r\leq  \|v_\lambda^{2_s^\ast-1}-U_\rho^{2_s^\ast-1}\|_r+ C\lambda^\sigma,
\end{equation*}
where in the last inequality we used $r(q-1),\, r>2$ to get $\|v_\lambda^{q-1}-v_\lambda\|_r\leq C$. Using the elementary inequality: for every $0<p<\infty$ and $\epsilon>0$, there exists $C(\epsilon,p)>0$ such that $\big||a+b|^p-|a|^p\big|\leq \epsilon|a|^p+C(\epsilon)|b|^p $, we obtain
\begin{align*}
    \int_{\RR}|v_\lambda^{2_s^\ast-1}-U_\rho^{2_s^\ast-1}|^r\,dx&\leq \int_{\RR}\left|\epsilon|U_\rho|^{2_s^\ast-1}+C_\epsilon|v_\lambda-U_\rho|^{2_s^\ast-1}\right|^r\,dx\\
    &\leq 2^{r-1}\epsilon^r\int_{\RR}|U_\rho|^{r(2_s^\ast-1)}+C_\epsilon|v_\lambda-U_\rho|^{r(2_s^\ast-1)}\,dx,
\end{align*}
for any $\epsilon>0$.
Using the definition of $U_\rho$ from \eqref{Urho}, it follows that $U_\rho\in L^{r(2_s^\ast-1)}$.
Therefore, as $v_\lambda\to U_\rho$ in $L^{r(2_s^\ast-1)}(\RR)$, we obtain $\|\fra v_\lambda-\fra U_\rho\|_r\to0$ as $\lambda\to0$. Now \cite[Theorem~1.6(iii)]{RS3} yields
$$\|v_\lambda-U_\rho\|_{C^{\alpha}(\RR)}\leq \|\fra v_\lambda-\fra U_\rho\|_r, \quad\alpha=2s-\frac Nr$$
and thus $v_\lambda\to U_\rho$ in $C^{2s-\delta}(\RR)$ where $\delta=\frac Nr$. Thus, \eqref{asymp-i} follows. 

\noindent\textbf{Step-3: $\|v_\lambda\|_2^2\to\|U_0\|_2^2$ as $\lambda\to0$,} where $U_0:=U_{\rho_0}$ as defined in \eqref{U-0}. 

 For this we first prove that $v_0=U_0$. Define $M_\lambda:=\frac{v_\lambda(0)}{U_0(0)}$ and 
\begin{equation}\label{rescale_2}
w_\lambda(x):=M_\lambda^{-1}v_\lambda\left(M_\lambda^{-\frac{2}{N-2s}}x\right).
\end{equation}
Therefore, $w_\lambda$ solves 
\begin{equation}\label{w_lambda}
\begin{cases}
    \fra w_\lambda+\lambda^\sigma M_\lambda^{2-2_s^\ast}w_\lambda&=w_\lambda^{2_s^\ast-1}+\lambda^\sigma M_\lambda^{q-2_s^\ast}w_\lambda\\
    \hfill w_\lambda(0)&=U_0(0).
\end{cases}
\end{equation}
Moreover,
$$\|w_\lambda\|_2^2=M_\lambda^\frac{4s}{N-2s}\|v_\lambda\|_2^2, \quad [w_\lambda]_{\H}^2= [v_\lambda]_{\H}^2. $$
Since $v_\lambda(0)\to U_\rho(0)\neq0$ as $\lambda\to0$, we get $M_\lambda\sim 1$. Therefore,
$$[w_\lambda]_{\H}^2\sim  \|w_\lambda\|_{2^*_s}^{2^*_s}\sim\|w_\lambda\|_q^q \sim1, \quad \|w_\lambda\|_2^2\sim(2_s^\ast-q).$$
Thus up to a subsequence $w_\lambda\rightharpoonup \tilde w$ in $\H$ for some $\tilde w\in\H$. Hence passing the limit $\lambda\to 0$ in the weak formulation of \eqref{w_lambda} yields that $\tilde w$ is a positive weak solution of \eqref{Emden-Fowler}. Moreover $w_\lambda(0)=U_0(0)$ for all $\lambda>0$ implies $\tilde w(0)=U_0(0)$ and hence $\tilde w=U_0$ i.e., $w_\lambda \rightharpoonup U_{\rho_0}$. Now repeating the arguments we did for $v_\lambda$, we see that $w_\lambda\to U_{\rho_0}$ in $\dot{H}^s(\RR)\cap C^{2s-\delta}(\RR)$. Moreover, 
 \eqref{rescale_2} implies $[w_\lambda]_{\H}^2=[v_\lambda]_{\H}^2$. Combining this with the fact that $w_\lambda\to U_{\rho_0}$ and $v_\lambda\to U_{\rho}$ in $\dot{H}(\RR)\cap C^{0,\alpha}(\RR)$ for some $\alpha>0$, we conclude that 
 $U_{\rho_0}=U_{\rho}$. 
 
Observe that by direct computation using the definition of $U_{\rho_0}$, it follows that $$\|U_{\rho_0}\|_2^2=\frac{2(2_s^\ast-q)}{q(2_s^\ast-2)}\|U_{\rho_0}\|_q^q.$$
 Thus using \eqref{norm_2,q}, we see that
\begin{equation*}
    \|v_\lambda\|_2^2-\|U_{\rho_0}\|_2^2=\frac{2(2_s^\ast-q)}{q(2_s^\ast-2)}(\|v_\lambda\|_q^q-\|U_{\rho_0}\|_q^q)\to 0 \quad\text{as}\quad \lambda\to 0,
\end{equation*}
where for the last limit we have used the radial lemma $v_\lambda\to U_{\rho_0}$ in $L^q(\RR)$.

\noindent\textbf{Step-4: $\H$ convergence} From step 1 and step 3, we got $\|v_\lambda\|_{\H}\to\|U_{\rho_0}\|_{\H}$ as $\lambda\to0$.  Since Hilbert spaces are uniformly convex, using $v_\lambda\rightharpoonup U_{\rho_0}$ in $\H$ we get $v_\lambda\to U_{\rho_0}$ in $\H$. Hence, $v_\lambda\to U_{\rho_0}$ in $\H\cap C^{2s-\delta}(\RR)$ where $\delta\in(0,2s)$.
\end{proof}
For the remaining case {\bf $2s<N\leq 4s$}, we first present a lemma analogous to Lemma~\ref{lemma_4c}.
\begin{lemma}\label{l:low-dimension}
There exists a constant $\varpi=\varpi(q)>0$ such that for all small $\lambda>0$,
\begin{equation}\label{4h0}
    m_\lambda\leq\begin{cases}
    m_0-\lambda^\sigma\left(\ln\frac1{\lambda}\right)^{-\frac{4-q}{q-2}}\varpi, \text{ if }N=4s,\\
    m_0-\lambda^{\frac{2(N-2s)}{(N-2s)q-4s}}\varpi, \text{ if }2s<N<4s, \, \, q>\frac{4s}{N-2s}.
\end{cases}
\end{equation}
\end{lemma}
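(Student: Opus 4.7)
The plan is to construct an explicit test function and exploit the variational characterization $m_\lambda \leq \sup_{t \geq 0} J_\lambda(t\phi)$ from Lemma~\ref{N_lambda_char}. Since the Aubin--Talenti bubble $U_\rho$ no longer lies in $L^2(\RR)$ when $2s < N \leq 4s$, the single-parameter approach of Lemma~\ref{lemma_4c} fails, and I would instead work with a two-parameter truncation $\phi_{\rho,R}(x) := \eta(x/R)\, U_\rho(x)$, where $\eta \in C_c^\infty(\RR)$ is a smooth radial cutoff with $\eta \equiv 1$ on $B_1$ and $\eta \equiv 0$ outside $B_2$, $\rho$ is the concentration scale of the bubble, and $R \gg \rho$ is the cutoff scale, both to be chosen as functions of $\lambda$.

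The first step is to establish sharp asymptotic estimates on the four norms of $\phi_{\rho,R}$ appearing in $J_\lambda$. Standard perturbative truncation estimates for fractional bubbles give
\[
[\phi_{\rho,R}]_{\H}^2 = [U_1]_{\H}^2 + O((\rho/R)^{N-2s}), \qquad \|\phi_{\rho,R}\|_{2_s^\ast}^{2_s^\ast} = \|U_1\|_{2_s^\ast}^{2_s^\ast} + O((\rho/R)^N),
\]
while the change of variables $x=\rho y$ reveals
\[
\|\phi_{\rho,R}\|_q^q \sim \rho^{N - q(N-2s)/2}, \qquad \|\phi_{\rho,R}\|_2^2 \sim \begin{cases} \rho^{N-2s} R^{4s-N}, & 2s < N < 4s, \\ \rho^{2s} \log(R/\rho), & N = 4s. \end{cases}
\]
The $L^q$ integral converges at infinity because $q > 4s/(N-2s)$ forces $q(N-2s) > N$, whereas the $L^2$ integral diverges and produces a tail depending on the ratio $R/\rho$.

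Plugging this into Lemma~\ref{N_lambda_char} and expanding $\max_t J_\lambda(t\phi_{\rho,R})$ around the near-unity Nehari maximizer exactly as in the proof of Lemma~\ref{lemma_4c} yields
\[
m_\lambda \leq m_0 + C_1 (\rho/R)^{N-2s} + \frac{\lambda^\sigma}{2}\|\phi_{\rho,R}\|_2^2 - \frac{\lambda^\sigma}{q}\|\phi_{\rho,R}\|_q^q\,(1 + o(1)).
\]
Writing $S=R/\rho$ and first optimizing over $S$ to balance the truncation error $S^{-(N-2s)}$ against the $L^2$ term gives, in the case $2s<N<4s$, $S \sim \lambda^{-\sigma/(2s)}\rho^{-1}$, and the bound reduces to a competition of the form $\lambda^{\sigma(N-2s)/(2s)} \rho^{N-2s} - \lambda^\sigma \rho^{N-q(N-2s)/2}$. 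A second optimization in $\rho$, combined with the identity $\sigma = 4s/((N-2s)(q-2))$, produces after elementary algebra exactly the exponent $2(N-2s)/((N-2s)q - 4s)$ claimed in \eqref{4h0}. The borderline case $N = 4s$ is treated identically except that the logarithm $\log(R/\rho)$ is propagated through the balance, giving the additional factor $(\log(1/\lambda))^{-(4-q)/(q-2)}$.

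The main technical obstacle is producing the correct sharp asymptotic expansion for the fractional $H^s$ seminorm of the truncated bubble: the double integral $[\phi_{\rho,R}]_{\H}^2$ involves long-range nonlocal interactions across the transition region $|x| \sim R$ that are noticeably more delicate than in the local case and need to be controlled uniformly in $R$ and $\rho$. Once those four norm estimates are in hand with the correct error rates, the remaining argument is elementary two-variable calculus, and the constant $\varpi > 0$ can then be chosen small enough to absorb the higher-order corrections.
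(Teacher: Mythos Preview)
Your approach is essentially the paper's: a truncated bubble $\eta_R U_\rho$ as test function, the Servadei--Valdinoci norm estimates \eqref{4h1}--\eqref{4h4}, and a two-parameter optimization leading to the claimed exponents. The only minor differences are that the paper uses the dilation path $v_t(x)=v(x/t)$ from Lemma~\ref{N_lambda_char} rather than the ray $tv$ (which makes the decomposition $m_\lambda\le A-\lambda^\sigma a_\lambda^N\phi(\rho)$ exact, with an explicit formula for $a_\lambda$, so your ``$(1+o(1))$'' expansion of the Nehari maximizer is bypassed), and it optimizes in the reverse order, first maximizing $\phi(\rho):=\tfrac1q\|\eta_R U_\rho\|_q^q-\tfrac12\|\eta_R U_\rho\|_2^2$ in $\rho$ and then choosing $\ell=R/\rho$ as an explicit power of $\lambda$.
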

\begin{proof}
    Let $R\gg1$ be a large parameter and $\eta_R\in C_0^{\infty}(\RR)$ is a cutoff function such that $\eta_R\equiv1$ on $B(0,R)$, $\eta_R\equiv0$ on $\RR\setminus B(0,2R)$, $0\leq\eta_R\leq1$ when $R\leq|x|\leq2R$ and $|\nabla\eta_R(x)|\leq\frac2R$.

    \noindent\textbf{Claim:} For $\ell\gg1$, 
    \begin{align}
        [\eta_\ell U_1]_{\H}^2&\leq[U_1]_{\H}^2+O(\ell^{2s-N})\label{4h1}.\\
        \|\eta_\ell U_1\|_{2_s^\ast}^{2_s^\ast}&=\|U_1\|_{2_s^\ast}^{2_s^\ast}+O(\ell^{-N})\label{4h2}.\\
        \|\eta_\ell 
 U_1\|_{q}^{q}&=\|U_1\|_{q}^{q}+o(1)\label{4h3}.\\
        \|\eta_\ell U_1\|_2^2&\sim\begin{cases}
            \ln \ell (1+o(1)),\text{ if } N=4s,\\
            \ell^{4s-N}(1+o(1)),\text{ if }2s<N<4s.
        \end{cases}\label{4h4}
    \end{align}
      It is easy to see that \eqref{4h1}, \eqref{4h2} and one side of \eqref{4h4} follow from Proposition 21 and Proposition 22 of \cite{SV} by a change of variables. For the remaining parts we proceed like \cite[Section~6.1]{BM2}. 
    \begin{align*}
        \|\eta_\ell U_1\|_q^q&\leq \int_{B_{2\ell}}|U_1|^q\,dx=\|U_1\|_q^q-\int_{\RR\setminus B_{2\ell}}|U_1|^q\,dx\\&\cong\|U_1\|_q^q-C\int_{\RR\setminus B_{2\ell}}\frac1{|x|^{q(N-2s)}}\,dx\\&=\|U_1\|_q^q-C\ell^{N-q(N-2s)},
    \end{align*}
    since the given hypothesis $2s<N\leq 4s$ and $q>\frac{4s}{N-2s}$ implies $q>\frac{N}{N-2s}$.
    Similarly, 
    \begin{equation*}
        \|\eta_\ell U_1\|_q^q\geq \int_{B_{\ell}}|U_1|^q\,dx=\|U_1\|_q^q-\int_{\RR\setminus B_{\ell}}|U_1|^q\,dx\cong\|U_1\|_q^q-C\ell^{N-q(N-2s)},
    \end{equation*}
which yields \eqref{4h3}.
Let $\ell_0\gg1$ be fixed then for $\ell>\ell_0$,
\begin{align*}
    \|\eta_\ell U_1\|_2^2&\leq \int_{B_{2\ell}}|U_1|^2\,dx=\int_{B_{\ell_0}}|U_1|^2\,dx+\int_{B_{2\ell}\setminus B_{\ell_0}}|U_1|^2\,dx =\int_{B_{\ell_0}}|U_1|^2\,dx+C\int_{\ell_0}^{2\ell}r^{4s-N-1}\,dr.
\end{align*}
Thus we get \eqref{4h4}. Using Lemma~\ref{N_lambda_char}, for $\rho>0$ to be chosen later, we have 
\begin{align*}
    m_\lambda\leq \sup_{t\geq0} J_\lambda((\eta_R U_{\rho})_t)=J_\lambda((\eta_RU_\rho)_{a_\lambda})&\leq \sup_{t\geq0}\left(\frac{t^{N-2s}}{2}[\eta_R U_{\rho}]_{\H}^2-\frac{t^N}{2_s^\ast}\|\eta_R U_{\rho}\|_{2_s^\ast}^{2_s^\ast}\right)\\&\quad-\lambda^\sigma a_\lambda^N\left[\frac1q\|\eta_R U_{\rho}\|_q^q-\frac12\|\eta_R U_{\rho}\|_2^2\right]\\&=A-\lambda^\sigma B,
\end{align*}
where 
\begin{equation}
    a_\lambda=\left(\frac{\frac1{2_s^\ast}[\eta_R U_{\rho}]_{\H}^2}{\frac1{2_s^\ast}\|\eta_R U_{\rho}\|_{2_s^\ast}^{2_s^\ast}+\lambda^\sigma\left[\frac1q\|\eta_R U_{\rho}\|_q^q-\frac12\|\eta_R U_{\rho}\|_2^2\right]}\right)^{\frac1{2s}}.
\end{equation}
Set $\ell=\frac R\rho$ then using the fact $[U_1]_{\H}^2=\|U_1\|_{2_s^\ast}^{2_s^\ast}=\frac{Nm_0}{s}$,
\begin{align}
    A&=\frac sN\frac{[\eta_R U_{\rho}]_{\H}^{\frac Ns}}{\|\eta_R U_{\rho}\|_{2_s^\ast}^{\frac Ns}}=\frac sN\frac{[\eta_\ell U_1]_{\H}^{\frac Ns}}{\|\eta_\ell U_1\|_{2_s^\ast}^{\frac Ns}}\leq\frac sN\frac{\left[\frac{Nm_0}{s}+O(\ell^{2s-N})\right]^{\frac{N}{2s}}}{\left[\frac{Nm_0}{s}+O(\ell^{-N})\right]^{\frac{N}{2_s^\ast s}}}\nonumber\\
    &=m_0\frac{\left[1+O(\ell^{2s-N})\right]^{\frac{N}{2s}}}{\left[1+O(\ell^{-N})\right]^{\frac{N}{2_s^\ast s}}}\leq m_0(1+O(\ell^{2s-N}))(1-O(\ell^{-N})+O(\ell^{-2N})))\nonumber\\&\leq m_0+O(\ell^{2s-N}).\label{4i}
\end{align}
Define $\phi:(0,\infty)\to\R$ by $$\phi(\rho):=\frac1q\|\eta_R U_{\rho}\|_q^q-\frac12\|\eta_R U_{\rho}\|_2^2=\frac1q\rho^{N-\frac{q(N-2s)}{2}}\|\eta_\ell U_1\|_q^q-\frac12\rho^{2s}\|\eta_\ell U_1\|_2^2.$$
Then it can be checked that $$\phi'(\rho)=0\implies \rho=\left(\frac{2(2_s^\ast-q)}{q(2_s^\ast-2)}\,\frac{\|\eta_\ell U_1\|_q^q}{\|\eta_\ell U_1\|_2^2}\right)^{\frac{2}{(N-2s)(q-2)}}.$$
Denote this $\rho$ by $\rho_1$. Therefore, 
$$\phi(\rho_1)=\max_{\rho>0} \phi(\rho)=\frac1q\left(\frac2q\right)^{\frac{2_s^\ast-q}{q-2}}G(q)\left(\frac{\|\eta_\ell U_1\|_q^{q(2_s^\ast-2)}}{\|\eta_\ell U_1\|_2^{2(2_s^\ast-q)}}\right)^{\frac{1}{q-2}}.$$
Using the interpolation inequality $\|\eta_\ell U_1\|_q^q\leq \|\eta_\ell U_1\|_2^{\frac{2(2_s^\ast-q)}{2_s^\ast-2}}\|\eta_\ell U_1\|_{2_s^\ast}^{\frac{2_s^\ast(q-2)}{2_s^\ast-2}}$, \\
 we have 
$
\phi(\rho_1)\leq \frac1q\left(\frac2q\right)^{\frac{2_s^\ast-q}{q-2}}G(q)\|U_1\|_{2_s^\ast}^{2_s^\ast}.
$
From now on we let $\rho=\rho_1$. Thus we get 
\begin{align}
    m_\lambda&= \frac sN\frac{[\eta_\ell U_1]_{\H}^{\frac Ns}}{\|\eta_\ell U_1\|_{2_s^\ast}^{\frac Ns}}-\lambda^\sigma \left(\frac{[\eta_\ell U_1]_{\H}^2}{\|\eta_\ell U_1\|_{2_s^\ast}^{2_s^\ast}+\lambda^\sigma2_s^\ast\phi(\rho_1)}\right)^{\frac{N}{2s}}\phi(\rho_1)\nonumber\\
    &\leq \frac sN\frac{[\eta_\ell U_1]_{\H}^{\frac Ns}}{\|\eta_\ell U_1\|_{2_s^\ast}^{\frac Ns}}\left[1-\frac{N\lambda^\sigma}{s}\frac{\phi(\rho_1)}{\|\eta_\ell U_1\|_{2_s^\ast}^{2_s^\ast}}\left(\frac{1}{1+\lambda^\sigma \frac{2_s^\ast\phi(\rho_1)}{\|\eta_\ell U_1\|_{2_s^\ast}^{2_s^\ast}}}\right)^{\frac{N}{2s}}\right]\nonumber\\
    &\leq \frac sN\frac{[\eta_\ell U_1]_{\H}^{\frac Ns}}{\|\eta_\ell U_1\|_{2_s^\ast}^{\frac Ns}}\left[1-\frac{N\lambda^\sigma}{s}\frac{\phi(\rho_1)}{\|\eta_\ell U_1\|_{2_s^\ast}^{2_s^\ast}}\left(1-\lambda^\sigma\frac{N2_s^\ast}{2s}\frac{\phi(\rho_1)}{\|\eta_\ell U_1\|_{2_s^\ast}^{2_s^\ast}}\right)\right]\nonumber\\
    &\leq \frac sN\frac{[\eta_\ell U_1]_{\H}^{\frac Ns}}{\|\eta_\ell U_1\|_{2_s^\ast}^{\frac Ns}}\left[1-\frac{N\lambda^\sigma}{2s}\frac{\phi(\rho_1)}{\|\eta_\ell U_1\|_{2_s^\ast}^{2_s^\ast}}\right]\quad(\text{for }\lambda\text{ small})\nonumber\\
    &\leq \frac sN\frac{[\eta_\ell U_1]_{\H}^{\frac Ns}}{\|\eta_\ell U_1\|_{2_s^\ast}^{\frac Ns}}\left[1-\lambda^\sigma\frac{\phi(\rho_1)}{2m_0}\right] 
    \leq A\left[1-\lambda^\sigma\frac{\phi(\rho_1)}{2m_0}\right].
    \label{4j}
\end{align}
For the rest of the proof, we consider the cases $N=4s$ and $2s<N<4s$ separately.\\
\textbf{Case 1: $N=4s$}\\
Using \eqref{4h3} and \eqref{4h4}, we have for $\ell\gg1$ \begin{align}
    \phi(\rho_1)&=\frac1q\left(\frac2q\right)^{\frac{2_s^\ast-q}{q-2}}G(q)\left(\frac{\|\eta_\ell U_1\|_q^{q(2_s^\ast-2)}}{\|\eta_\ell U_1\|_2^{2(2_s^\ast-q)}}\right)^{\frac{1}{q-2}}\nonumber\\
    &=\frac1q\left(\frac2q\right)^{\frac{2_s^\ast-q}{q-2}}G(q)\left(\frac{\| U_1\|_q^{q(2_s^\ast-2)}+o(1)}{[C\ln\ell(1+o(1))]^{(2_s^\ast-q)}}\right)^{\frac{1}{q-2}}\nonumber\\
    &=(\ln\ell)^{-\frac{2_s^\ast-q}{q-2}}\frac1q\left(\frac2{C q}\right)^{\frac{2_s^\ast-q}{q-2}}G(q)\left(\| U_1\|_q^{q(2_s^\ast-2)}+o(1)\right)^{\frac{1}{q-2}}\nonumber\\
    &\geq (\ln\ell)^{-\frac{2_s^\ast-q}{q-2}}\frac1{2q}\left(\frac2{C q}\right)^{\frac{2_s^\ast-q}{q-2}}G(q)\| U_1\|_q^{\frac{q(2_s^\ast-2)}{q-2}}.\label{4k1}
\end{align}
Now using \eqref{4i}, \eqref{4k1} in \eqref{4j} we get 
$$m_\lambda\leq (m_0+O(\ell^{2s-N}))\left[1-\lambda^\sigma (\ln\ell)^{-\frac{2_s^\ast-q}{q-2}}\frac1{4qm_0}\left(\frac2{C q}\right)^{\frac{2_s^\ast-q}{q-2}}G(q)\| U_1\|_q^{\frac{q(2_s^\ast-2)}{q-2}}\right].$$
Let $\ell=\frac1{\lambda^M}$ where $M$ is to be chosen later. Then 
$$m_\lambda\leq (m_0+O(\lambda^{2Ms})\left[1-\lambda^\sigma (M\ln\frac1{\lambda})^{-\frac{2_s^\ast-q}{q-2}}\frac1{4qm_0}\left(\frac2{C q}\right)^{\frac{2_s^\ast-q}{q-2}}G(q)\| U_1\|_q^{\frac{q(2_s^\ast-2)}{q-2}}\right].$$
If $M>\frac1{(q-2)s}$ then $2Ms>\frac{2}{q-2}=\frac{2_s^\ast-2}{q-2}=\sigma$. Thus
\begin{equation}
    m_\lambda\leq m_0-\lambda^\sigma(\ln\frac1{\lambda})^{-\frac{2_s^\ast-q}{q-2}}\frac1{4q}\left(\frac2{C qM}\right)^{\frac{2_s^\ast-q}{q-2}}G(q)\| U_1\|_q^{\frac{q(2_s^\ast-2)}{q-2}}.
\end{equation}
Letting $\varpi=\frac1{4q}\left(\frac2{C qM}\right)^{\frac{2_s^\ast-q}{q-2}}G(q)\| U_1\|_q^{\frac{q(2_s^\ast-2)}{q-2}}$, we obtain \eqref{4h0} in the case $N=4s$.

\noindent\textbf{Case 2: $2s<N<4s$ and $\frac{4s}{N-2s}<q<2_s^\ast$}\\
As before using \eqref{4h3} and \eqref{4h4}, we get
\begin{align}
    \phi(\rho_1)&=\frac1q\left(\frac2q\right)^{\frac{2_s^\ast-q}{q-2}}G(q)\left(\frac{\| U_1\|_q^{q(2_s^\ast-2)}+o(1)}{[C\ell^{4s-N}(1+o(1))]^{(2_s^\ast-q)}}\right)^{\frac{1}{q-2}}\nonumber\\
    &=\ell^{-\frac{(2_s^\ast-q)(4s-N)}{q-2}}\frac1q\left(\frac2{C q}\right)^{\frac{2_s^\ast-q}{q-2}}G(q)\left(\| U_1\|_q^{q(2_s^\ast-2)}+o(1)\right)^{\frac{1}{q-2}}\nonumber\\
    &\geq \ell^{-\frac{(2_s^\ast-q)(4s-N)}{q-2}}\frac1{2q}\left(\frac2{C q}\right)^{\frac{2_s^\ast-q}{q-2}}G(q)\| U_1\|_q^{\frac{q(2_s^\ast-2)}{q-2}}\label{4k2}.
\end{align}
Using \eqref{4k2}, \eqref{4i} in \eqref{4j}, we get 
$$m_\lambda\leq (m_0+O(\ell^{2s-N}))\left[1-\lambda^\sigma \ell^{-\frac{(2_s^\ast-q)(4s-N)}{q-2}}\frac1{4qm_0}\left(\frac2{C q}\right)^{\frac{2_s^\ast-q}{q-2}}G(q)\| U_1\|_q^{\frac{q(2_s^\ast-2)}{q-2}}\right].$$

Let $\ell=\delta^{-1}\lambda^{\frac{-2}{(N-2s)q-4s}}$ where $\delta$ is to be chosen later. Then 

$$m_\lambda\leq (m_0+\delta^{N-2s}O(\lambda^{\frac{2(N-2s)}{(N-2s)q-4s}}))\left[1-\lambda^{\frac{2(N-2s)}{(N-2s)q-4s}} \delta^{\frac{(2_s^\ast-q)(4s-N)}{q-2}}\frac1{4qm_0}\left(\frac2{C q}\right)^{\frac{2_s^\ast-q}{q-2}}G(q)\| U_1\|_q^{\frac{q(2_s^\ast-2)}{q-2}}\right].$$
Since $(N-2s)q>4s$, we have $\frac{(2_s^\ast-q)(4s-N)}{q-2}<N-2s$. Thus we can choose a small $\delta>0$ such that 
$$m_\lambda\leq m_0-\lambda^{\frac{2(N-2s)}{(N-2s)q-4s}} \frac1{4q}\left(\frac{2\delta^{4s-N}}{C q}\right)^{\frac{2_s^\ast-q}{q-2}}G(q)\| U_1\|_q^{\frac{q(2_s^\ast-2)}{q-2}}.$$

Letting $\varpi=\frac1{4q}\left(\frac{2\delta^{4s-N}}{C q}\right)^{\frac{2_s^\ast-q}{q-2}}G(q)\| U_1\|_q^{\frac{q(2_s^\ast-2)}{q-2}}$, we get \eqref{4h0}.
\end{proof}
\begin{corollary}\label{m_0m_lambda1}
For small $\lambda>0$
    $$\lambda^\sigma\gtrsim  m_0-m_\lambda\gtrsim \begin{cases}
        \lambda^\sigma\left(\ln\frac1{\lambda}\right)^{-\frac{4-q}{q-2}},\text{ if } $N = 4s,$\\
        \lambda^{\frac{2(N-2s)}{(N-2s)q-4s}},\text{ if } 2s<N<4s,\,\,  q>\frac{4s}{N-2s}.
    \end{cases}$$
\end{corollary}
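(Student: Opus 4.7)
The plan is to read the corollary as a direct packaging of the two preceding lemmas: Lemma~\ref{lemma_4b}(2) supplies the upper bound on $m_0-m_\lambda$, and Lemma~\ref{l:low-dimension} supplies the two dimension-dependent lower bounds. So the work is essentially bookkeeping.

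For the upper bound $m_0-m_\lambda\lesssim\lambda^\sigma$, I would rearrange the inequality in Lemma~\ref{lemma_4b}(2), namely
\begin{equation*}
m_\lambda>m_0\left(1-\lambda^\sigma\frac{N}{s}G(q)(1+G(q)\lambda^\sigma)^{\frac{N-2s}{2s}}\right),
\end{equation*}
to read $m_0-m_\lambda<m_0\lambda^\sigma\frac{N}{s}G(q)(1+G(q)\lambda^\sigma)^{(N-2s)/(2s)}$. Since the factor $(1+G(q)\lambda^\sigma)^{(N-2s)/(2s)}\to 1$ as $\lambda\to 0$, for all small $\lambda$ the right-hand side is bounded by $C\lambda^\sigma$ with $C$ independent of $\lambda$, and this already covers both the $N=4s$ and $2s<N<4s$ cases.

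For the lower bounds I simply invoke Lemma~\ref{l:low-dimension}. In the $N=4s$ regime, that lemma gives $m_\lambda\le m_0-\varpi\lambda^\sigma(\ln\frac{1}{\lambda})^{-(2^*_s-q)/(q-2)}$, and because $2^*_s=2N/(N-2s)=4$ when $N=4s$ the exponent $(2^*_s-q)/(q-2)$ collapses to $(4-q)/(q-2)$, matching the statement. In the regime $2s<N<4s$ with $q>4s/(N-2s)$, Lemma~\ref{l:low-dimension} directly yields $m_\lambda\le m_0-\varpi\lambda^{2(N-2s)/((N-2s)q-4s)}$, whence the claimed lower bound. No additional argument is required.

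There is essentially no obstacle here: the delicate analysis — the cutoff construction $\eta_R U_\rho$, the asymptotics \eqref{4h1}--\eqref{4h4}, the optimization of the scale $\ell=R/\rho$ against $\lambda$ (picking $\ell=\lambda^{-M}$ when $N=4s$ and $\ell=\delta^{-1}\lambda^{-2/((N-2s)q-4s)}$ when $2s<N<4s$), and the verification that the exponent produced beats the $O(\ell^{2s-N})$ error coming from \eqref{4i} — has already been carried out inside the proof of Lemma~\ref{l:low-dimension}. The corollary is just the combination of the two bounds into a single two-sided statement.
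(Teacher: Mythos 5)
Your proposal is correct and is exactly the paper's argument: the authors' proof of Corollary~\ref{m_0m_lambda1} is the one-line "follows from Lemma~\ref{lemma_4b} and Lemma~\ref{l:low-dimension}", i.e.\ precisely the rearrangement of Lemma~\ref{lemma_4b}(2) for the upper bound and the direct citation of Lemma~\ref{l:low-dimension} (with $2_s^\ast=4$ when $N=4s$) for the lower bounds. Your observation that Lemma~\ref{lemma_4b} carries no dimensional restriction, so its upper bound remains valid for $2s<N\leq 4s$, is the only point that needed checking, and it is correct.
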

\begin{proof}
Proof follows from Lemma~\ref{lemma_4b} and Lemma~\ref{l:low-dimension}.
\end{proof}

{\bf Proof of Theorem \ref{Asymp_1} for $2s<N\leq 4s$:}

By Lemma \ref{v_lambda_bdd}, $v_\lambda$ is bounded in $\H$. Thus 
from the discussion following Lemma \ref{v_lambda_bdd}, there exists $v_0\in H_r^s(\RR)$ such that $v_\lambda\rightharpoonup v_0$ in $\H$  and satisfies the critical Emden-Fowler equation $\fra v=v^{2_s^\ast-1}$. Moreover, $v_0\geq 0$ as $v_\lambda>0$. By Fatou's lemma 
$\|v_0\|_2^2\leq\lim\inf_{\lambda\to 0}\|v_\lambda\|_2^2<\infty$. On the other hand, for $N\leq4s$ Aubin-Talenti functions $U_\rho\not\in L^2(\RR)$. Hence $v_0=0$. 

Further, as done in the proof of Theorem \ref{Asymp_1} with $N>4s$, by Corollary \ref{m_0m_lambda1}, we have 
$$J_0(v_\lambda)=J_\lambda(v_\lambda)+\lambda^\sigma\left(\frac1q\|v_\lambda\|_q^q-\frac12\|v_\lambda\|_2^2\right)=m_\lambda+o(1)=m_0+o(1)$$
and 
$$J_0'(v_\lambda)v=J_\lambda'(v_\lambda)v+\lambda^\sigma \int_{\RR}|v_\lambda|^{q-2}v_\lambda v\,dx-\lambda^\sigma \int_{\RR}v_\lambda v\,dx=o(1).$$
Therefore $\{v_\lambda\}$ is a PS sequence for $J_0$ at the level $m_0$.  Hence by Lemma \ref{PS_theorem_2}  
\begin{align*}
    v_{\lambda}&=\sum_{j=1}^k (R_{\lambda}^{(j)})^{-\frac{N-2s}2}v^{(j)}\left(\frac{x}{R_{\lambda}^{(j)}}\right)+o(1)\quad\text{in }\HHH\\
    m_0&=\sum_{j=1}^kJ_0(v^{(j)}).
\end{align*}
Since $v^{(j)}$ are non trivial solutions of the limit equation $\fra v=v^{2_s^\ast-1}$, $J_0(v^{(j)})\geq m_0$ as a result $k=1$ and $v^{(1)}=U_\rho$ for some $\rho\in(0,\infty)$. Therefore we conclude that 
\begin{equation}\label{17-8-1}
v_\lambda-\xi_\lambda^{-\frac{N-2s}{2}}U_1(\xi_\lambda^{-1}\cdot)\to0 \qquad\text{in}\quad \HHH
\end{equation}
as $\lambda\to0$, where $\xi_\lambda:=\rho R_\lambda^{(1)}\to0$ as $\lambda\to0$. From \eqref{17-8-1} it follows that $w_\lambda(x):=\xi_\lambda^{\frac{N-2s}{2}}v_\lambda(\xi_\lambda x)\to U_1$ in $\HHH$. Thus $w_\lambda\to U_1$ in $L^q_{\text{loc}}(\RR)$. Further, 
$$[w_\lambda]_{\H}^2=[v_\lambda]_{\H}^2,\quad \|w_\lambda\|_{2_s^\ast}^{2_s^\ast}=\|v_\lambda\|_{2_s^\ast}^{2_s^\ast},\quad \|w_\lambda\|_2^2=\xi_\lambda^{-2s}\|v_\lambda\|_2^2,\quad \|w_\lambda\|_q^q=\xi_\lambda^{-N\frac{2_s^\ast-q}{2_s^\ast}}\|v_\lambda\|_q^q .$$
From Lemma~\ref{PS_theorem_2} we also obtain $|\log(R_\lambda^{(1)})|\to\infty$. Thus either $R_\lambda\to \infty$ or $R_\lambda\to 0$. If  $R_\lambda\to \infty$ then $\xi_\lambda\to\infty$ and therefore $\|w_\lambda\|_{L^q(\RR)}\to 0$ as $\lambda\to 0$ (since $\|v_\lambda\|_q$ is uniformly bounded). This contradicts the fact that $w_\lambda\to U_1$ in $L^q_{loc}$. Hence $R_\lambda\to 0$, which in turn implies $\xi_\lambda\to 0$. 
Further, by Corollary~\ref{m_0m_lambda1} we have
\begin{equation*}
\lambda^\sigma\gtrsim  [U_1]^2_{H^s(\RR)}-[v_\lambda]^2_{H^s(\RR)}\gtrsim \begin{cases}
        \lambda^\sigma\left(\ln\frac1{\lambda}\right)^{-\frac{4-q}{q-2}}, \quad\mbox{ if }\, N=4s\\
        \lambda^{\frac{2(N-2s)}{(N-2s)q-4s}},\quad\mbox{ if }\,  2s<N<4s,\, q>\frac{4s}{N-2s}.
    \end{cases}
    \end{equation*}  
    
 %%%%%%%%%%%%%%%%%%%%%%%%%%%%%%%%%%%%%%%%%%%%%%%%%%
     
\section{Asymptotic behaviour when $2<p<2_s^\ast$}\label{subcritical}
In this section we shall assume that $2<p<2_s^\ast.$ Recall that $u_\lambda$ is a positive ground state solution of the PDE 
$$\fra u+u=|u|^{p-2}u+\lambda |u|^{q-2}u,$$
with $2<q<p<2^*_s$ and $\max_{x}u_\lambda(x)=u_\lambda(0)$ (w.l.g). 
Putting $\lambda=0$, we have 
\begin{equation}\label{limit_equation_3}
    \fra u+u=|u|^{p-2}u.
\end{equation}
By \cite{FLS}, we know that \eqref{limit_equation_3} has a unique (up to translation) positive, radially symmetric, strictly decreasing ground state solution $u_0\in\H$. We wish to show that as $\lambda\to0$ the family $u_\lambda$ converges to the $u_0$ in $\H$. 

Recall that 
\begin{equation*}
        I_{\lambda}(u)=\frac12[u]_{\H}^2+\frac12\int_{\RR}|u|^2\,dx-\frac1{p}\int_{\RR}|u|^{p}\,dx-\frac{\lambda}{q}\int_{\RR}|u|^q\,dx.
\end{equation*}
is the energy functional associated to \eqref{PDE} and the energy functional associated to \eqref{limit_equation_3} is given by
\begin{equation}
    I_0(u):=\frac12[u]_{\H}^2+\frac12\|u\|_2^2-\frac1{p}\|u\|_{p}^{p}.
\end{equation}
We also have the Nehari manifolds associated to $I_\lambda$ and $I_0$ by 
\begin{align}
    \SS_{\lambda}&:=\{u\in\H\setminus\{0\}: [u]_{\H}^2+\|u\|_2^2=\|u\|_p^p+\lambda\|u\|_q^q\}\label{S_lambda}.\\
    \SS_0&:=\{u\in\H\setminus\{0\}: [u]_{\H}^2+\|u\|_2^2=\|u\|_p^p\}\label{S_0}.
\end{align}
And define, $$m_\lambda:=\inf_{u\in\SS_\lambda}I_\lambda(u),\quad m_0:=\inf_{u\in\SS_0}I_0(u).$$
As done in Lemma \ref{N_lambda_char} we can easily show that for $\lambda\geq0$, $$m_\lambda=\inf_{v\in\H\setminus\{0\}}\sup_{t\geq0}I_\lambda(tu)=\inf_{v\in\H\setminus\{0\}}\sup_{t\geq0}I_\lambda(u_t).$$ Moreover, we have $m_\lambda=I_\lambda(u_\lambda)=\sup_{t\geq0}I_\lambda(tu_\lambda)=\sup_{t\geq0}I_\lambda((u_\lambda)_t).$ For any $u\in\H$ with $\frac1p\|u\|_p^p+\frac1q\lambda\|u\|_q^q-\frac12\|u\|_2^2>0$, there exists a unique $t(u)>0$ such that $\frac{d}{dt}I_\lambda(u_t)\mid_{t(u)}=0$ and $\sup_{t\geq0}I_\lambda(u_t)=I_\lambda(u_{t(u)})$.
\begin{lemma}\label{subcrit_bound}
    The family $(u_\lambda)_{\lambda>0}$ is uniformly bounded in $\H$. For $\lambda$ small, $m_0-m_\lambda\sim \lambda.$
\end{lemma}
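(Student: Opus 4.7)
The plan is to establish both claims through three steps that mirror (and simplify) the critical-case arguments of Section~\ref{critical}.

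\textbf{Step 1 (uniform $\H$-bound).} First, I would observe $I_\lambda(u)=I_0(u)-\tfrac{\lambda}{q}\|u\|_q^q\le I_0(u)$, so the mountain-pass characterisation $m_\bullet=\inf_{v\neq 0}\sup_{t\ge 0}I_\bullet(tv)$ invoked just above the lemma immediately gives $m_\lambda\le m_0$. Next I would exploit the usual Nehari--Pohozaev splitting
\begin{equation*}
    m_\lambda=I_\lambda(u_\lambda)-\tfrac12\langle I_\lambda'(u_\lambda),u_\lambda\rangle=\left(\tfrac12-\tfrac1p\right)\|u_\lambda\|_p^p+\left(\tfrac12-\tfrac1q\right)\lambda\|u_\lambda\|_q^q,
\end{equation*}
whose two coefficients are strictly positive because $2<q<p$. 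Combined with $m_\lambda\le m_0$ this gives $\|u_\lambda\|_p^p+\lambda\|u_\lambda\|_q^q\lesssim m_0$, and then the Nehari identity $\|u_\lambda\|_{\H}^2=\|u_\lambda\|_p^p+\lambda\|u_\lambda\|_q^q$ transfers this into the desired uniform bound in $\H$.

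\textbf{Step 2 (lower bound $m_0-m_\lambda\gtrsim\lambda$).} I would test the minimax for $I_\lambda$ against the limit ground state $u_0$: $m_\lambda\le\sup_{t\ge 0}I_\lambda(tu_0)=I_\lambda(t_\lambda u_0)$ for a unique $t_\lambda>0$. The equation $\frac{d}{dt}I_\lambda(tu_0)|_{t_\lambda}=0$, together with $u_0\in\SS_0$ which gives $\|u_0\|_{\H}^2=\|u_0\|_p^p$, forces $t_\lambda^{p-2}=1-\lambda\,t_\lambda^{q-p}\|u_0\|_q^q/\|u_0\|_p^p$, so $t_\lambda=1+O(\lambda)$. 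Because $t=1$ is a non-degenerate maximum of $t\mapsto I_0(tu_0)$, a second-order expansion yields $I_0(t_\lambda u_0)=m_0+O(\lambda^2)$, whence
\begin{equation*}
    m_\lambda\le I_\lambda(t_\lambda u_0)=I_0(t_\lambda u_0)-\tfrac{\lambda t_\lambda^q}{q}\|u_0\|_q^q=m_0-\tfrac{\lambda}{q}\|u_0\|_q^q+O(\lambda^2),
\end{equation*}
giving the one-sided estimate (note $\|u_0\|_q<\infty$ by the decay of $u_0$).

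\textbf{Step 3 (upper bound $m_0-m_\lambda\lesssim\lambda$).} I would now swap roles: $m_0\le\sup_{t\ge 0}I_0(tu_\lambda)=I_0(s_\lambda u_\lambda)$ with $s_\lambda^{p-2}=\|u_\lambda\|_{\H}^2/\|u_\lambda\|_p^p=1+\lambda\|u_\lambda\|_q^q/\|u_\lambda\|_p^p$ by the Nehari identity for $u_\lambda$. To conclude $s_\lambda=1+O(\lambda)$ I need a uniform lower bound $\|u_\lambda\|_p\gtrsim 1$, which I would extract by inserting Sobolev into Nehari: $\|u_\lambda\|_{\H}^2\le C\|u_\lambda\|_{\H}^p+C\lambda\|u_\lambda\|_{\H}^q$ forces $\|u_\lambda\|_{\H}$ to stay away from $0$ for small $\lambda$, and the Nehari identity combined with the $L^q$-bound from Step~1 transfers this positivity to $\|u_\lambda\|_p$. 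With $s_\lambda=1+O(\lambda)$ in hand,
\begin{equation*}
    m_0\le I_0(s_\lambda u_\lambda)=I_\lambda(s_\lambda u_\lambda)+\tfrac{\lambda s_\lambda^q}{q}\|u_\lambda\|_q^q\le\sup_{t\ge 0}I_\lambda(tu_\lambda)+C\lambda=m_\lambda+C\lambda.
\end{equation*}
Combining Steps~2 and~3 produces $m_0-m_\lambda\sim\lambda$. The main technical obstacle is precisely this uniform positivity $\|u_\lambda\|_p\gtrsim 1$: without it one loses control of $s_\lambda-1$ and of the ratio $\|u_\lambda\|_q^q/\|u_\lambda\|_p^p$, and the upper estimate collapses.
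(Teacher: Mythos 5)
Your proof is correct and follows essentially the same strategy as the paper's: both bounds on $m_0-m_\lambda$ are obtained by cross-testing the two minimax levels along a fibering whose optimal parameter is shown to be $1+O(\lambda)$; the only difference is that you use the multiplicative fibering $t\mapsto tu$ together with the Nehari identity, where the paper uses the dilation fibering $t\mapsto u(\cdot/t)$ together with the Pohozaev identity, so that your uniform lower bound $\|u_\lambda\|_p\gtrsim 1$ plays the role of the paper's bound $\|u_\lambda\|_q^q/[u_\lambda]_{\H}^2\le C$. One small slip: in Step~2 the stationarity equation should read $t_\lambda^{p-2}=1-\lambda t_\lambda^{q-2}\|u_0\|_q^q/\|u_0\|_p^p$ rather than $t_\lambda^{q-p}$, which does not affect the conclusion $t_\lambda=1+O(\lambda)$ (and in fact only $I_0(t_\lambda u_0)\le\sup_{t\ge0}I_0(tu_0)=m_0$ is needed there, not the second-order expansion).
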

\begin{proof}
    It's easy to show that $m_0\geq m_\lambda$.
    \begin{equation*}
            m_0\geq m_\lambda=I_{\lambda}(u_\lambda)-\frac1q\langle I_{\lambda}'(u_\lambda),u_\lambda\rangle\geq \left(\frac12-\frac1q\right)\|u\|_{\H}^2
    \end{equation*}
    Thus, $(u_\lambda)$ is uniformly bounded in $\H$. Observe that, by Sobolev inequality, for some $C>0$ independent of $\lambda$, $$\frac{\|u_\lambda\|_q^q}{[u_\lambda]_{\H}^2}\leq C.$$
    By Pohozaev identity, $$\frac1p\|u_\lambda\|_p^p-\frac12\|u_\lambda\|_2^2=\frac12[u_\lambda]_{\H}^2-\frac{\lambda}q\|u\|_q^q\geq \frac12[u_\lambda]_{\H}^2(1-C\lambda).$$ For $\lambda$ sufficiently small, $\frac1p\|u_\lambda\|_p^p-\frac12\|u_\lambda\|_2^2>0$. Thus there exists a unique $t_\lambda>0$ such that $\sup_{t\geq0}I_0((u_\lambda)_t)=I_0((u_\lambda)_{t_\lambda})$. Further, $$t_\lambda^{2s}= \frac{\frac12[u_\lambda]_{\H}^2}{\frac1p\|u_\lambda\|_p^p-\frac12\|u_\lambda\|_2^2}\leq\frac1{1-C\lambda}.$$ Now,
\begin{equation*}
        \begin{aligned}
            m_0&\leq \sup_{t\geq0}I_0((u_\lambda)_t)=I_0((u_\lambda)_{t_\lambda})=I_\lambda((u_\lambda)_{t_\lambda})+\lambda t_\lambda^N\frac1q\|u_\lambda\|_q^q \leq m_\lambda+\lambda t_\lambda^N\frac1q\|u_\lambda\|_q^q.
        \end{aligned}
    \end{equation*}
Note that $t_\lambda^N\leq 1+C\lambda$ for some $C>0.$ Since $u_\lambda$ is bounded in $L^q(\RR)$, for some $C>0$, 
$$m_0-m_\lambda\leq C\lambda.$$
For the other direction, let $\tau_\lambda$ be the unique $t>0$ such that $\sup_{t\geq0}I_\lambda((u_0)_t)=I_\lambda((u_0)_{\tau_\lambda})$. Further, $\tau_\lambda^{2s}= \frac{\frac12[u_0]_{\H}^2}{\frac1p\|u_0\|_p^p+\frac{\lambda}q\|u_0\|_q^q-\frac12\|u_0\|_2^2}.$ 
By Pohozaev identity, $$\frac12[u_0]_{\H}^2=\frac1p\|u_0\|_p^p-\frac12\|u_0\|_2^2.$$
Thus,  $$\tau_\lambda^{2s}= \frac{\frac12[u_0]_{\H}^2}{\frac12[u_0]_{\H}^2+\frac{\lambda}q\|u_0\|_q^q}\geq\frac1{1+C\lambda},$$
where $C>0.$ Then  
\begin{equation*}
        \begin{aligned}
            m_\lambda&\leq \sup_{t\geq0}I_\lambda((u_0)_t)=I_\lambda((u_0)_{\tau_\lambda})=I_0((u_0)_{\tau_\lambda})-\lambda \tau_\lambda^N\frac1q\|u_0\|_q^q \leq m_0-\lambda \tau_\lambda^N\frac1q\|u_0\|_q^q.
        \end{aligned}
    \end{equation*}
Thus for small $\lambda>0$ and some $C_1>0$, 
$$m_0-m_\lambda\geq \lambda \tau_\lambda^N\frac1q\|u_0\|_q^q\geq C_1\lambda.$$
This concludes the lemma.
\end{proof}
\begin{lemma}\label{subcrit_asymp}For $\lambda$ small,
\begin{align}
    \|u_0\|_p^p-\|u_\lambda\|_p^p\sim\lambda.\\
    \|u_0\|_2^2-\|u_\lambda\|_2^2\sim\lambda.\label{5a}\\
    [u_0]_{\H}^2-[u_\lambda]_{\H}^2\sim\lambda.\label{5b}
\end{align}
\end{lemma}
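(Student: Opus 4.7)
The plan is to derive the three asymptotic relations simultaneously by combining (i) the strong $H^s(\RR)$-convergence $u_\lambda\to u_0$, (ii) the Pohozaev and Nehari identities for $u_\lambda$ and $u_0$, and (iii) the two-sided bound $m_0-m_\lambda\sim\lambda$ already proved in Lemma~\ref{subcrit_bound}. First I would upgrade the uniform $H^s$-bound to strong convergence. Following the template of Steps 1--2 in the proof of Theorem~\ref{Asymp_1}, the estimates $|I_0(u_\lambda)-I_\lambda(u_\lambda)|+\|I_0'(u_\lambda)-I_\lambda'(u_\lambda)\|_{\HH}\lesssim\lambda$ show that $(u_\lambda)$ is a Palais--Smale sequence for $I_0$ at level $m_0$; Strauss compactness (Lemma~\ref{Strauss_compactness_lemma}) delivers a nontrivial weak $H^s$-limit solving \eqref{subcri-uni-sol}, which by the uniqueness result of \cite{FLS} must equal $u_0$. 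Strong convergence follows, and hence $\|u_\lambda\|_q^q\to\|u_0\|_q^q>0$, i.e.\ $\|u_\lambda\|_q^q\sim 1$.

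\textbf{Key identities and the easy two estimates.} Combining the Nehari identity $[u_\lambda]_{\H}^2+\|u_\lambda\|_2^2=\|u_\lambda\|_p^p+\lambda\|u_\lambda\|_q^q$ with the Pohozaev identity of Corollary~\ref{Pohozaev_corollary} produces the two clean relations
\begin{equation*}
[u_\lambda]_{\H}^2=\tfrac{N}{s}\,m_\lambda,\qquad m_\lambda=\tfrac{p-2}{2p}\|u_\lambda\|_p^p+\lambda\tfrac{q-2}{2q}\|u_\lambda\|_q^q,
\end{equation*}
whose $\lambda=0$ versions hold for $u_0$. Subtracting gives
\begin{equation*}
[u_0]_{\H}^2-[u_\lambda]_{\H}^2=\tfrac{N}{s}(m_0-m_\lambda),\qquad \tfrac{p-2}{2p}\bigl(\|u_0\|_p^p-\|u_\lambda\|_p^p\bigr)=(m_0-m_\lambda)+\lambda\tfrac{q-2}{2q}\|u_\lambda\|_q^q.
\end{equation*}
The first relation, together with Lemma~\ref{subcrit_bound}, yields \eqref{5b} immediately. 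The right-hand side of the second is a sum of two strictly positive terms of order $\lambda$ (using $\|u_\lambda\|_q^q\sim 1$), so $\|u_0\|_p^p-\|u_\lambda\|_p^p\sim\lambda$.

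\textbf{The $L^2$ estimate and the main obstacle.} Subtracting the Nehari identities for $u_0$ and $u_\lambda$ gives
\begin{equation*}
\|u_0\|_2^2-\|u_\lambda\|_2^2=\bigl(\|u_0\|_p^p-\|u_\lambda\|_p^p\bigr)-\bigl([u_0]_{\H}^2-[u_\lambda]_{\H}^2\bigr)-\lambda\|u_\lambda\|_q^q,
\end{equation*}
which immediately provides the upper bound $|\|u_0\|_2^2-\|u_\lambda\|_2^2|\lesssim\lambda$. The matching lower bound is the main technical obstacle: the three terms on the right are each $O(\lambda)$ with a priori competing signs, so one must verify no cancellation at leading order. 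I would address this by sharpening Lemma~\ref{subcrit_bound} to the one-term asymptotic $m_0-m_\lambda=\tfrac{\lambda}{q}\|u_0\|_q^q+o(\lambda)$, obtained by expanding $\sup_{t\geq 0}I_\lambda((u_0)_t)$ at $t=1$ using the Pohozaev identity for $u_0$ (and the reverse bound via $\sup_{t\geq 0}I_0((u_\lambda)_t)$, using $u_\lambda\to u_0$ in $L^q$). Inserting the refined asymptotic together with $\|u_\lambda\|_q^q\to\|u_0\|_q^q$ into the displayed formula yields an explicit leading coefficient for $\|u_0\|_2^2-\|u_\lambda\|_2^2$, whose strict positivity over the admissible range $2<q<p<2_s^*$ completes the proof of \eqref{5a}.
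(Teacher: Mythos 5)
Your treatment of the first and third relations is correct and is essentially the paper's argument: combining the Nehari identity with Corollary~\ref{Pohozaev_corollary} gives $[u_\lambda]_{\H}^2=\frac{N}{s}m_\lambda$ and $m_\lambda=\frac{p-2}{2p}\|u_\lambda\|_p^p+\lambda\frac{q-2}{2q}\|u_\lambda\|_q^q$, and subtracting the $\lambda=0$ versions together with Lemma~\ref{subcrit_bound} yields \eqref{5b} and the $L^p$ estimate. (For the lower bound on $\|u_0\|_p^p-\|u_\lambda\|_p^p$ the paper only needs $m_0-m_\lambda\ge\lambda\tau_\lambda^N\frac1q\|u_0\|_q^q$, so your preliminary compactness step, while sound, is not required for these two estimates.)

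The real issue is \eqref{5a}, and your proposed fix does not close the gap. Carrying out exactly the expansion you describe: $m_0-m_\lambda\simeq\frac{\lambda}{q}\|u_0\|_q^q$ forces $\|u_0\|_p^p-\|u_\lambda\|_p^p\simeq\frac{p}{p-2}\lambda\|u_0\|_q^q$, and substituting this into the identity $\|u_0\|_2^2-\|u_\lambda\|_2^2=\frac{N(2_s^\ast-p)}{s2_s^\ast p}\big(\|u_0\|_p^p-\|u_\lambda\|_p^p\big)-\frac{N(2_s^\ast-q)}{s2_s^\ast q}\lambda\|u_\lambda\|_q^q$ (which follows from Nehari and Pohozaev) gives
\[
\|u_0\|_2^2-\|u_\lambda\|_2^2\simeq\frac{N-2s}{2s}\cdot\frac{2_s^\ast(q-p+2)-2q}{q(p-2)}\,\lambda\,\|u_0\|_q^q .
\]
The factor $2_s^\ast(q-p+2)-2q=\frac{2}{N-2s}\big(2N+2sq-Np\big)$ is positive if and only if $q>\frac{N(p-2)}{2s}$, which is \emph{not} implied by $2<q<p<2_s^\ast$: for $N=3$, $s=\frac12$ (so $2_s^\ast=3$), $p=2.9$ and $q=2.5$ it is negative, and for $q=2.7$ it vanishes, in which case $\|u_0\|_2^2-\|u_\lambda\|_2^2=o(\lambda)$ and the claimed two-sided bound fails. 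So your final assertion of ``strict positivity over the admissible range'' is false, and since the leading coefficient is completely determined by the identities, no rearrangement of the same ingredients can rescue it. You have in fact isolated a genuine weak point of the lemma: the paper dismisses \eqref{5a} with ``we can similarly prove,'' but the computation above shows that the statement requires the additional hypothesis $q>\frac{N(p-2)}{2s}$ (automatic when $p\le 2+\frac{4s}{N}$), or must be weakened to $\big|\|u_0\|_2^2-\|u_\lambda\|_2^2\big|\lesssim\lambda$.
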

\begin{proof}
By $u_\lambda\in\SS_\lambda$ and Pohozaev identity, 
    \begin{equation*}
        \begin{aligned}
            [u_\lambda]_{\H}^2+\|u_\lambda\|_2^2&=\|u_\lambda\|_p^p+\lambda\|u_\lambda\|_q^q.\\
            \frac1{2_s^\ast}[u_\lambda]_{\H}^2+\frac12\|u_\lambda\|_2^2&=\frac1p\|u_\lambda\|_p^p+\frac{\lambda}q\|u_\lambda\|_q^q.
        \end{aligned}
    \end{equation*}
\noindent Therefore we get
\begin{align*}
    [u_\lambda]_{\H}^2&=\frac{N(p-2)}{s2p}\|u_\lambda\|_p^p+\frac{N(q-2)}{s2q}\lambda\|u_\lambda\|_q^q, \ \|u_\lambda\|_2^2=\frac{N(2_s^\ast-p)}{s2_s^\ast p}\|u_\lambda\|_p^p+\frac{N(2_s^\ast-q)}{s2_s^\ast q}\lambda\|u_\lambda\|_q^q,
\end{align*}
and 
\begin{align*}
    m_\lambda =\frac1{2}[u_\lambda]_{\H}^2+\frac12\|u_\lambda\|_2^2-\frac1p\|u_\lambda\|_p^p-\frac{\lambda}q\|u_\lambda\|_q^q =\frac{p-2}{2p}\|u_\lambda\|_p^p+\lambda\frac{q-2}{2q}\|u_\lambda\|_q^q.
\end{align*}
\noindent Similarly, 
\begin{align*}
    [u_0]_{\H}^2=\frac{N(p-2)}{s2p}\|u_0\|_p^p, \ \|u_0\|_2^2 =\frac{N(2_s^\ast-p)}{s2_s^\ast p}\|u_0\|_p^p.
\end{align*}
and 
\begin{equation*}
    m_0=\frac{p-2}{2p}\|u_0\|_p^p.
\end{equation*}
 By the boundedness of $\{u_\lambda\}$ in $L^q(\RR)$,
$$m_0-m_\lambda=\frac{p-2}{2p}(\|u_0\|_p^p-\|u_\lambda\|_p^p)-\lambda\frac{q-2}{2q}\|u_\lambda\|_q^q\geq\frac{p-2}{2p}(\|u_0\|_p^p-\|u_\lambda\|_p^p)-C\lambda.$$ 
Thus, by Lemma \ref{subcrit_bound}, $\|u_0\|_p^p-\|u_\lambda\|_p^p\leq C\lambda$.For the other way inequality, we see that \begin{align*}
     \lambda \tau_\lambda^N\frac1q\|u_0\|_q^q&\leq m_0-m_\lambda=\frac{p-2}{2p}(\|u_0\|_p^p-\|u_\lambda\|_p^p)-\lambda\frac{q-2}{2q}\|u_\lambda\|_q^q\\ \Rightarrow\lambda \tau_\lambda^N\frac1q\|u_0\|_q^q&\leq \frac{p-2}{2p}(\|u_0\|_p^p-\|u_\lambda\|_p^p).
\end{align*} 
Thus, we get $\|u_0\|_p^p-\|u_\lambda\|_p^p\sim\lambda$.

Using the above relations and the Lemma \ref{subcrit_bound}, we can similarly prove \eqref{5a} and \eqref{5b}.
\end{proof}
{\bf Proof of Theorem~\ref{Asymp_2}}\begin{proof}
From Lemma~\ref{subcrit_asymp}, we have $\|u_\lambda\|_{\H}\to\|u_0\|_{\H}$ as $\lambda\to0$.  Since $\H$ is uniformly convex, using $u_\lambda\rightharpoonup u_0$ in $\H$ we get $u_\lambda\to u_0$ in $\H$. Finally, imitating the step 2 of the proof of Theorem \ref{Asymp_1}, we see that $u_\lambda\to u_0$ in $\H\cap C^{2s-\delta}(\RR)$ where $\delta\in(0,2s)$. 
\end{proof}

\section{Local uniqueness and nondegeneracy}\label{loc_uniq}

In this section we shall prove the local uniqueness theorem and nondegeneracy results, namely Theorem~\ref{t:uni-cri} and Theorem~\ref{t:uni-sub}.

\subsection{Proof of Theorem~\ref{t:uni-cri}}

We first prove the uniqueness part. To this end, suppose, there exist two different families of positive ground state solutions $u_\lambda^1$ and $u_{\lambda}^2$ of \eqref{PDE}. Now define,
$$v_\lambda^i(x)=\lambda^{\frac1{q-2}}u_\lambda^i\left(\lambda^{\frac{2_s^\ast-2}{2s(q-2)}}x\right),\quad i=1,2.$$
Therefore, by Theorem~\ref{Asymp_1}
$$\|v_\lambda^i-U_{\rho_0}\|_{\dot{H}^s(\RR)\cap C^{2s-\sigma}(\RR)}\to 0, \quad\text{as}\quad n\to\infty, \quad i=1, 2,$$ for some $\sigma\in(0,2s)$. Recall that $v_\lambda^i$, $i=1,2$ satisfies \eqref{Rescaled_PDE} 
$$\fra v_\lambda^i+\lambda^\sigma v_\lambda^i={\left(v_\lambda^i\right)}^{2_s^\ast-1}+\lambda^\sigma {\left(v_\lambda^i\right)}^{q-1}\quad\text{in }\RR.$$ Define
$$\theta_\lambda:=\frac{v_\lambda^1-v_\lambda^2}{\|v_\lambda^1-v_\lambda^2\|_{\infty}}.$$
Therefore $\theta_\lambda$ satisfies
\begin{equation}\label{6a}
    \fra \theta_\lambda+\lambda^\sigma \theta_\lambda=\left(c_\lambda^1(x)+\lambda^\sigma c_\lambda^2(x)\right)\theta_\lambda \text{ in } \RR,
\end{equation}
where 
\begin{align*}
    c_\lambda^1(x)&=(2_s^\ast-1)\int_0^1(tv_\lambda^1(x)+(1-t)v_\lambda^2(x))^{2_s^\ast-2}\,dt,\\
    c_\lambda^2(x)&=(q-1)\int_0^1(tv_\lambda^1(x)+(1-t)v_\lambda^2(x))^{q-2}\,dt.
\end{align*}
Since $v_\lambda^i\to U_{\rho_0}$ in $C^{2s-\sigma}(\RR)$, $i=1,2$ for some $\sigma\in(0,2s).$ Thus,
\begin{equation}\label{6c}
    c_\lambda^1(x)\to (2_s^\ast-1)U_{\rho_0}^{2_s^\ast-2}(x),\quad \lambda^\sigma c_\lambda^2(x)\to0\text{ as }\lambda\to0.
\end{equation}
Combining this with $\|\theta_\lambda\|_{\infty}=1$, we get that as $\lambda\to0$, $$|\fra\theta_\lambda(x)|\leq |c_\lambda^1(x)+\lambda^\sigma c_\lambda^2(x)|\,\|\theta_\lambda\|_{\infty}=(2_s^\ast-1)U_{\rho_0}^{2_s^\ast-2}(x)+o(1).$$
Hence $\fra\theta_\lambda$ is uniformly bounded for all $\lambda$ sufficiently small. By using Schauder estimate \cite[Theorem~1.1]{RS2}, and the fact that $C^{k,\alpha}_{\text{loc}}(\RR)$ is compactly embedded in $C^{k,\beta}_{\text{loc}}(\RR)$ for any $k\geq0$ and $0<\beta<\alpha\leq1$, we get $\theta_\lambda\to \theta$ in $C^{2s-\delta}_{\text{loc}}(\RR)$ for some $\delta\in(0,2s)$.\\
\noindent\textbf{Claim-1:} The family $\{\theta_\lambda\}_{\lambda\leq1}$ is uniformly bounded in $\HHH$.\\
Let $\beta<\frac{2_s^\ast(q-4)+4}{2}$ be a positive number. By Sobolev inequality and \eqref{6a}, we get 
\begin{equation}\label{6b}
\begin{aligned}
    S_\ast\left(\int_{\RR}\theta_\lambda^{2_s^\ast}\,dx\right)^{\frac{2}{2_s^\ast}}\leq [\theta_\lambda]_{\H}^2&=\int_{\RR}(c_\lambda^1(x)+\lambda^\sigma c_\lambda^2(x)-\lambda^\sigma)\theta_\lambda^2\,dx\\
    &\leq \int_{\RR}(c_\lambda^1(x)+\lambda^\sigma c_\lambda^2(x))\theta_\lambda^2\,dx\\
    &\leq \int_{\RR}(c_\lambda^1(x)+\lambda^\sigma c_\lambda^2(x))\theta_\lambda^{2-\beta}\,dx\\&\leq \left(\int_{\RR}\theta_\lambda^{2_s^\ast}\,dx\right)^{\frac{2-\beta}{2_s^\ast}}\left(\int_{\RR}(c_\lambda^1(x)+\lambda^\sigma c_\lambda^2(x))^{\frac{2_s^\ast}{2_s^\ast-2+\beta}}\,dx\right)^{\frac{2_s^\ast-2+\beta}{2_s^\ast}}.
\end{aligned}
\end{equation}
By Triangle inequality, $$S_\ast\|\theta_\lambda\|_{2_s^\ast}^{\beta}\leq \|c_\lambda^1+\lambda^\sigma c_\lambda^2\|_{{\frac{2_s^\ast}{2_s^\ast-2+\beta}}}\leq \|c_\lambda^1\|_{\frac{2_s^\ast}{2_s^\ast-2+\beta}}+\lambda^\sigma \|c_\lambda^2\|_{\frac{2_s^\ast}{2_s^\ast-2+\beta}}.$$
By Minkowski's integral inequality, 
$$\left(\int_{\RR}(c_\lambda^1(x))^{\frac{2_s^\ast}{2_s^\ast-2+\beta}}\,dx\right)^{\frac{2_s^\ast-2+\beta}{2_s^\ast}}\leq (2_s^\ast-1) \int_0^1\left(\int_{\RR}(tv_\lambda^1(x)+(1-t) v_\lambda^2(x))^{\frac{2_s^\ast(2_s^\ast-2)}{2_s^\ast-2+\beta}}\,dx\right)^{\frac{2_s^\ast-2+\beta}{\beta}}\,dt.$$
and 
$$\left(\int_{\RR}(c_\lambda^2(x))^{\frac{2_s^\ast}{2_s^\ast-2+\beta}}\,dx\right)^{\frac{2_s^\ast-2+\beta}{2_s^\ast}}\leq (q-1) \int_0^1\left(\int_{\RR}(tv_\lambda^1(x)+(1-t) v_\lambda^2(x))^{\frac{2_s^\ast(q-2)}{2_s^\ast-2+\beta}}\,dx\right)^{\frac{2_s^\ast-2+\beta}{\beta}}\,dt.$$
Since  $\beta<\frac{2_s^\ast(q-4)+4}{2}$  $\frac{2_s^\ast(2_s^\ast-2)}{2_s^\ast-2+\beta}>\frac{2_s^\ast(q-2)}{2_s^\ast-2+\beta}>2$. Recall that the families $\{v_\lambda^i\}_{\lambda\leq1}$, $i=1,2$ are uniformly bounded in $L^t(\RR)$ for all $t\geq2$. Thus  
the last term of \eqref{6b} is uniformly bounded which in turn implies that the family $\{\theta_\lambda\}_{\lambda\leq1}$ is uniformly bounded in $\HHH$. Hence $\theta_\lambda\rightharpoonup \theta$ in $\HHH$. Taking $\lambda\to0$ in the weak formulation of \eqref{6a}, we find that $\theta$ solves
\begin{equation}\label{6d}
    \fra \theta=(2_s^\ast-1)U_{\rho_0}^{2_s^\ast-2}\theta \text{ in }\RR.
\end{equation}
Since $\theta_\lambda\to \theta$ in $C^{0,\alpha}_{\text{loc}}(\RR)$ and $\|\theta_\lambda\|_{\infty}=1$ for all $\lambda$, we have $\|\theta\|_{\infty}\leq 1$.
It's easy to check that the rescale $\psi(x):=\theta(\rho_0x)$ solves 
\begin{equation}\label{6e}
    \fra \psi=(2_s^\ast-1)U_1^{2_s^\ast-2}\psi \text{ in }\RR.
\end{equation}
Now by \cite[Theorem~1.1]{DDS}, all bounded solutions of \eqref{6e} are linear combinations of the following $N+1$ functions:
$$\frac{N-2s}{2}U_1+x\cdot\nabla U_1,\quad  (U_1)_{x_i}, i=1,2,\ldots,N.$$
Thus there exist $c,c_i\in\RR$ such that 
$$\psi(x)=c\frac{1-|x|^2}{(1+|x|^2)^{\frac{N-2s+2}{2}}}+\sum_{i=1}^N c_i\frac{2x_i}{(1+|x|^2)^{\frac{N-2s+2}{2}}}.$$
Since $v_\lambda^i,i=1,2$ are symmetric so $\theta_\lambda,\theta$ and $\psi$ are symmetric. Thus each $c_i=0$.\\
\noindent\textbf{Claim-2:} $c=0$.\\
Suppose, $c\neq 0$. For simplicity, assume $c=1$. Thus, $$\psi(x)=\frac{1-|x|^2}{(1+|x|^2)^{\frac{N-2s+2}{2}}}.$$
By \eqref{norm_2,q},  
$$\|v_\lambda^i\|_2^2=\frac{2(2_s^\ast-q)}{q(2_s^\ast-2)}\|v_\lambda^i\|_q^q, \quad i=1,2.$$
Subtracting one equation from the other and multiplying $\frac{1}{\|v_\lambda^1-v_\lambda^2\|_{\infty}}$, we get 
$$\int_{\RR}\theta_\lambda (v_\lambda^1+v_\lambda^2)\,dx= \frac{2(2_s^\ast-q)}{(2_s^\ast-2)}\int_{\RR}\theta_\lambda\int_0^1(tv_\lambda^1+(1-t)v_\lambda^2)^{q-1}\,dt\,dx.$$
Since $\theta_\lambda\to\theta$ in $C^{2s-\delta}(\RR)$ and $v_\lambda^i\to U_{\rho_0}$ in $\H\cap C^{2s-\delta}(\RR)$, by DCT, $$\int_{\RR}\theta U_{\rho_0}\,dx=\frac{(2_s^\ast-q)}{(2_s^\ast-2)}\int_{\RR}\theta U_{\rho_0}^{q-1}\,dx.$$ 
By a simple change of variable, we get 
$$\int_{\RR}\psi(x)U_1(x)\,dx=\frac{(2_s^\ast-q)}{(2_s^\ast-2)}\rho_0^{-\frac{(N-2s)(q-2)}{2}}\int_{\RR}\psi(x)U_1^{q-1}(x)\,dx.$$
Using the expression of $\rho_0$, the above equation yields,
$$\int_{\RR}\psi(x)U_1(x)\,dx=\frac{q}{2}\frac{\|U_1\|_2^2}{\|U_1\|_q^q}\int_{\RR}\psi(x)U_1^{q-1}(x)\,dx.$$
By Lemma \ref{App_lemma_1}, 
$$\frac1{c_{N,s}}\left(\frac{-2N+(N-2s)2}{(N-2s)2}\right)\|U_1\|_2^2=\frac{q}{2}\frac{\|U_1\|_2^2}{\|U_1\|_q^q}\frac1{c_{N,s}}\left(\frac{-2N+(N-2s)q}{(N-2s)q}\right)\|U_1\|_q^q.$$
which implies that $q=2$ contradicting the assumption on $q$.
Hence, $\theta=0$ and $\theta_\lambda\to0$ on every compact set in $\RR$. Let $y_\lambda\in\RR$ be such that 
\begin{equation}\label{6f}
\theta_\lambda(y_\lambda)=\|\theta_\lambda\|_{\infty}=1.
\end{equation}
This implies that $y_\lambda\to\infty$ as $\lambda\to0$.\\
\noindent\textbf{Claim-3:} For $\lambda<1$ and $R>0$ large enough,
$$|\theta_\lambda(x)|\leq\frac{C}{|x|^{N-2s}}, \quad |x|>R,$$
where $C$ is independent of $\lambda$.

Since $v_\lambda\in L^r(\RR)$ for all $r\in[2,\infty)$, $\fra v_\lambda=v_\lambda^{2_s^\ast-1}+\lambda^\sigma (v_\lambda^{q-1}-v_\lambda)\in L^2(\RR)$. Which implies $$[v_\lambda]_{H^{2s}(\RR)}^2=\|\fra v_\lambda\|_2^2<\infty.$$
Hence $v_\lambda\in H^{2s}(\RR)$. By \cite[(C.8)]{FLS}, for any $f\in H^{2s}(\RR)$, $$\fra |f|\leq(\text{sgn}f)\fra f\quad\text{a.e. on }\RR,$$
where $\text{sgn } f(x)=\frac{f(x)}{|f(x)|}$ if $f(x)\neq0$ and $0$ if $f(x)=0$. Since $v_\lambda\in H^{2s}(\RR)$, $\theta_\lambda\in H^{2s}(\RR)$. Thus,
$$\fra |\theta_\lambda|\leq (\text{sgn }\theta_\lambda)\fra\theta_\lambda\leq \frac{\theta_\lambda}{|\theta_\lambda|}(c_\lambda^1+\lambda^\sigma c_\lambda^2)\theta_\lambda=(c_\lambda^1+\lambda^\sigma c_\lambda^2)|\theta_\lambda|\quad\text{a.e. on }\RR.$$

It's also easy to check that $|\theta_\lambda|\in\H$. We define the Kelvin transform of $|\theta_\lambda|$ by $$\Tilde{\theta}_\lambda(x):=\frac1{|x|^{N-2s}}|\theta_\lambda|\left(\frac{x}{|x|^2}\right),  \ x\neq0.$$
It can shown that $\Tilde{\theta}_\lambda\in\H$ and by \cite[Proposition~A.1]{RS1} $\fra\Tilde{\theta}_\lambda(x)=\frac1{|x|^{N+2s}}\fra|\theta_\lambda|(x)$. Therefore, we can see that for any $\lambda\leq1$, $\theta_\lambda$ is a subsolution of $$\fra u=\frac1{|x|^{4s}}\left(c_\lambda^1\left(\frac{x}{|x|^2}\right)+c_\lambda^2\left(\frac{x}{|x|^2}\right)\right)u.$$
Since $v_\lambda^{i},i=1,2$ is uniformly bounded in $\H$, the $L^2$ radial lemma yields, for any $x\neq0$, $$v_\lambda^i(x)\leq C(N)|x|^{-\frac N2}\|v_\lambda^i\|_{2}\leq C(N,s)|x|^{-\frac N2}.$$
For $r<1$ we have,
\begin{align*}
\int_{B_r}\frac1{|x|^{4s\gamma}}\left(c_\lambda^1\left(\frac{x}{|x|^2}\right)\right)^\gamma dx&=C\int_{B_r}\int_0^1\frac1{|x|^{4s\gamma}}\left(tv_\lambda^1\left(\frac{x}{|x|^2}\right)+(1-t)v_\lambda^2\left(\frac{x}{|x|^2}\right)\right)^{(2_s^\ast-2)\gamma}dt\,dx.\\
&\leq C\int_{B_r}|x|^{\frac{N(2_s^\ast-2)\gamma}2-4s\gamma}\,dx.
\end{align*}
If we choose a $\gamma\in(\frac{N}{2s},\frac{N}{4s-(2_s^\ast-2)\frac N2})$ then the last integral is finite.

For $r<1$ we also have,
\begin{align*}
\int_{B_r}\frac1{|x|^{4s\gamma}}\left(c_\lambda^2\left(\frac{x}{|x|^2}\right)\right)^\gamma dx&=C\int_{B_r}\int_0^1\frac1{|x|^{4s\gamma}}\left(tv_\lambda^1\left(\frac{x}{|x|^2}\right)+(1-t)v_\lambda^2\left(\frac{x}{|x|^2}\right)\right)^{(q-2)\gamma}dt\,dx.\\
&\leq C\int_{B_r}|x|^{\frac{N(q-2)\gamma}2-4s\gamma}\,dx.
\end{align*}
Since, $q>2+\frac{4s}N$, $\frac{N}{2s}<\frac{N}{4s-(q-2)\frac N2}$. If we choose a $\gamma\in(\frac{N}{2s},\frac{N}{4s-(q-2)\frac N2})$ then the last integral is finite.

Hence we can find a $\gamma>\frac N{2s}$ such that $\frac1{|x|^{4s}}\left(c_\lambda^1\left(\frac{x}{|x|^2}\right)+c_\lambda^2\left(\frac{x}{|x|^2}\right)\right) \in L^\gamma(B_r)$ for some $r<1.$ Now applying Moser iteration method in the spirit of \cite[Proposition~2.4]{JLX}, we get $\sup_{B_{r/2}}|\Tilde{\theta}_\lambda|\leq C\left(\int_{B_r}|\Tilde{\theta}_\lambda|^{2_s^\ast}\,dx\right)^{1/2_s^\ast}.$ Using Claim 1 we also have,
$$\int_{B_r}|\Tilde{\theta}_\lambda|^{2_s^\ast}\,dx\leq \int_{\RR}|\Tilde{\theta}_\lambda|^{2_s^\ast}\,dx\leq \int_{\RR}|\theta_\lambda|^{2_s^\ast}\,dx\leq C.$$

This implies that for some large $R>0$, $$|\theta_\lambda(x)|\leq \frac{C}{|x|^{N-2s}}, \quad |x|>R.$$

Therefore it follows that $\theta_\lambda(y_\lambda)\to0$ contradicting \eqref{6f}. Hence the uniqueness follows.

\noindent\textbf{Proof of nondegeneracy:} From the uniqueness part, there exists $0<\lambda_0\leq1$ such that if $\lambda<\lambda_0$ then \eqref{PDE} admits a unique radial positive ground state solution $u_\lambda$. We want to find a $\Tilde{\lambda}_0\leq \lambda_0$ such that for every $\lambda<\Tilde{\lambda}_0$, the equation $$\fra \phi+\phi=(2_s^\ast-1)u_\lambda^{2_s^\ast-2}\phi+\lambda(q-1)u_\lambda^{q-2}\phi\,\,\mbox{ in }\RR$$ has only trivial solution in $H^s_r(\RR)$. The proof is very similar to the uniqueness part. We will prove it by the method of contradiction. 

Suppose,  there exist $\{\lambda_n\}\subset(0,\lambda_0)$ 
and $\{\phi_n\}$ such that 
$\lambda_n\
\to 0,\, \phi_n\in 
H^s_r(\RR)\setminus\{0\}$ 
and $$\fra \phi_n+\phi_n=
(2_s^\ast-
1)u_{\lambda_n}^{2_s^\ast-
2}\phi_n+\lambda_n(q-
1)u_{\lambda_n}^{q-2}\phi_n\, \, \mbox{ in }\, \RR.$$

Define the rescaling of $\phi_n$ by $$\psi_n(x):=\lambda_n^{\frac1{q-2}}\phi_n\left(\lambda_n^{\frac{2_s^\ast-2}{2s(q-2)}}x\right).$$
It is easy to check that $\psi_n$ satisfies
\begin{equation*}
    \fra \psi_n+\lambda_n^\sigma\psi_n=
(2_s^\ast-
1)v_{\lambda_n}^{2_s^\ast-
2}\psi_n+\lambda_n^\sigma(q-
1)v_{\lambda_n}^{q-2}\psi_n\text{ in }\RR,
\end{equation*}
where $v_n(x):=\lambda_n^{\frac1{q-2}}u_n\left(\lambda_n^{\frac{2_s^\ast-2}{2s(q-2)}}x\right).$ Now define, $\tilde\psi_n=\frac{\psi_n}{\|\psi_n\|_{\infty}}$. Then $\tilde\psi_n$ satisfies
\begin{equation}\label{6g}
    \fra \tilde\psi_n+\lambda_n^\sigma\tilde\psi_n=
(2_s^\ast-
1)v_{\lambda_n}^{2_s^\ast-
2}\tilde\psi_n+\lambda_n^\sigma(q-
1)v_{\lambda_n}^{q-2}\tilde\psi_n\,\, \mbox{ in }\, \RR.
\end{equation}
From the proof of Theorem~\ref{Asymp_1} (step 2), we know that $v_\lambda$ is uniformly bounded (w.r.t. $\lambda\leq1$) in $L^\infty(\RR)$.
Therefore, $\fra \tilde\psi_n$ is uniformly bounded in $L^{\infty}(\RR)$. Thus, applying Schauder estimate \cite[Theorem 1.1]{RS2}, we have $$\tilde\psi_n\to\tilde\psi\text{ in }C_{\text{loc}}^{\alpha}(\RR)$$ for some $\alpha\in(0,1).$ As done in the Claim 1 in the uniqueness part, we can show that $\{\tilde\psi_n\}$ is uniformly bounded in $\HHH$. Hence $\tilde\psi_n\rightharpoonup \tilde\psi$ in $\HHH$. Taking $\lambda_n\to0$ in the weak formulation of \eqref{6g}, we find that $\tilde\psi$ solves
\begin{equation}
    \fra \tilde\psi=(2_s^\ast-1)U_{\rho_0}^{2_s^\ast-2}\tilde\psi \text{ in }\RR,
\end{equation}
and $\|\tilde\psi\|_{\infty}\leq1$. Since the sequence $\phi_n$ is radially symmetric, $\tilde\psi_n$ and $\tilde\psi$ are also symmetric. Thus using \cite[Theorem~1.1]{DDS}, as in the uniqueness part, we have $$\tilde\psi(x)=c\rho_0^{N-2s}\frac{\rho_0^2-|x|^2}{(\rho_0^2+|x|^2)^{\frac{N-2s+2}{2}}}$$ for some $c\in\R.$\\
\noindent\textbf{Claim:} $c=0$.\\
Set $w_n(x)=x\cdot\nabla v_{\lambda_n}(x)$. Then by Lemma \ref{App_lemma_2},
\begin{align*}
    \fra w_n(x)&=x\cdot\nabla(\fra v_{\lambda_n})(x)+2s\fra v_{\lambda_n}(x)\\
    &=\left[(2_s^\ast-1)v_{\lambda_n}^{2_s^\ast-2}w_n+\lambda_n^\sigma(q-1)v_{\lambda_n}^{q-2}w_n-\lambda^\sigma w_n\right]+2s\left[v_{\lambda_n}^{2_s^\ast-1}+\lambda_n^\sigma v_{\lambda_n}^{q-1}-\lambda_n^\sigma v_{\lambda_n}\right].
\end{align*}
Thus $w_n$ satisfies 
\begin{equation}\label{6h}
    \fra w_n+\lambda^\sigma w_n=\left[(2_s^\ast-1)v_{\lambda_n}^{2_s^\ast-2}w_n+\lambda_n^\sigma(q-1)v_{\lambda_n}^{q-2}w_n\right]+2s\left[v_{\lambda_n}^{2_s^\ast-1}+\lambda_n^\sigma v_{\lambda_n}^{q-1}-\lambda_n^\sigma v_{\lambda_n}\right].
\end{equation}
Taking $w_n$ as the test function for \eqref{6g} and $\tilde\psi_n$ for \eqref{6h}  yields
\begin{equation}\label{6i}
    \int_{\RR}\left[v_{\lambda_n}^{2_s^\ast-1}+\lambda_n^\sigma v_{\lambda_n}^{q-1}-\lambda_n^\sigma v_{\lambda_n}\right]\tilde\psi_n\,dx=0.
\end{equation}
Recall that $v_{\lambda_n}$ satisfies 
$$\fra v_{\lambda_n}+\lambda_n^\sigma v_{\lambda_n}=v_{\lambda_n}^{2_s^\ast-1}+\lambda_n^\sigma v_{\lambda_n}^{q-1}\, \mbox{ in }\, \RR.$$
Multiplying the above equation by $\tilde\psi_n$ and \eqref{6g} by $v_{\lambda_n}$ and then integrating by parts, we obtain
$$(2_s^\ast-1)\int_{\RR}v_{\lambda_n}^{2_s^\ast-1}\tilde\psi_n\,dx+\lambda_n^\sigma(q-1)\int_{\RR}v_{\lambda_n}^{q-1}\tilde\psi_n\,dx=\int_{\RR}v_{\lambda_n}^{2_s^\ast-1}\tilde\psi_n\,dx+\lambda_n^\sigma\int_{\RR}v_{\lambda_n}^{q-1}\tilde\psi_n\,dx.$$
Thus, $$\int_{\RR}v_{\lambda_n}^{2_s^\ast-1}\tilde\psi_n\,dx=-\lambda_n^\sigma\frac{q-2}{2_s^\ast-2}\int_{\RR}v_{\lambda_n}^{q-1}\tilde\psi_n\,dx.$$
Combining this with \eqref{6i}, we get 
$$\int_{\RR}v_{\lambda_n}\tilde\psi_n\,dx=\frac{2_s^\ast-q}{2_s^\ast-2}\int_{\RR}v_{\lambda_n}^{q-1}\tilde\psi_n\,dx.$$
Taking  the limit $n\to\infty$ to the above equation yields $$\int_{\RR}U_{\rho_0}\tilde\psi\,dx=\frac{2_s^\ast-q}{2_s^\ast-2}\int_{\RR}U_{\rho_0}^{q-1}\tilde\psi\,dx.$$
As done in the Claim 2 of the uniqueness part, the above equation is equivalent to $$\int_{\RR}\frac{1-|x|^2}{(1+|x|^2)^{\frac{N-2s+2}{2}}}U_1(x)\,dx=\frac{q}{2}\frac{\|U_1\|_2^2}{\|U_1\|_q^q}\int_{\RR}\frac{1-|x|^2}{(1+|x|^2)^{\frac{N-2s+2}{2}}}U_1^{q-1}(x)\,dx,$$
which in turn implies $q=2$ (by Lemma \ref{App_lemma_1}) and hence we get a contradiction as $q\neq 2$. Thus $\tilde\psi=0$ and $\tilde\psi_n\to0$ on every compact subset of $\RR$. Let $y_n$ be such that \begin{equation}\label{6j}
   \tilde\psi_n(y_n)=\|\tilde\psi_n\|_{\infty}=1.
\end{equation}
This implies that $y_n\to\infty$ as $n\to\infty$. Repeating the same argument as we did for the Claim~$3$ of the uniqueness part, it holds
 $$|\tilde\psi_n(x)|\leq \frac{C}{|x|^{N+2s}}, \quad |x|>R$$
 for some large $R>0$, where $C$ is independent of $\lambda$. Hence $\tilde\psi_n(y_n)\to 0$ contradicting \eqref{6j}. \hfill$\qed$

\subsection{Proof of Theorem~\ref{t:uni-sub}}
Let $2<q<p<2^*_s$ and set %$$\mathcal{G}_\lambda:=\{u_\lambda: u_\lambda \mbox{  is a positive ground state solution to  }\eqref{PDE} \},$$ 
$$\mathcal{X}:=L^2_{rad}(\RR)\cap L^p_{rad}(\RR)$$
equipped with the norm $\|u\|_\mathcal{X}:=\|u\|_2+\|u\|_p$. If $u\in \mathcal{X}$ solves \eqref{PDE} in the sense of distribution, then by bootstrap method it follows that $u\in H^{2s+1}(\RR)$ (see \cite[Section 8.1]{FLS}).

Now define the function $F: \mathcal{X}\times(0,\infty)\to \mathcal{X}$ by
$$F(v,\lambda):=v- \big((-\Delta)^s+1\big)^{-1}(|v|^{p-2}v+\lambda|v|^{q-2}v).$$ Thus $F(u,\lambda)=0$ if{}f $(u, \lambda)$ satisfies \eqref{PDE}. Further, $F(u_0,0)=0$ and $F\in C^1$ (see \cite[Section 8.1]{FLS}). Then
$$\frac{\partial F}{\partial v}(u_0,0)=1- \big((-\Delta)^s+1\big)^{-1}\bigg((p-1)|u_0|^{p-2}\bigg).$$
Denote $$K:=- \big((-\Delta)^s+1\big)^{-1}\bigg((p-1)|u_0|^{p-2}\bigg).$$ Since $H^s(\RR)\hookrightarrow L^2(\RR, u_0^{p-2})$ is compact (since $u_0\in L^\infty(\RR)$ and $u_0\to 0$ as $|x|\to\infty$), 
it follows that $K$ is a compact  operator. Moreover, the nondegenracy result \cite[Theorem 3.3]{FLS} implies $\text{Ker}\bigg({\frac{\partial F}{\partial v}(u_0,0)}\bigg|_{H^s_{rad}}\bigg)=\{0\}$. Thus  $-1\not\in\sigma(K)$ (spectrum of $K$). Hence, from the implicit function theorem, we observe that there exists $\lambda_0>0$ such that for  $\lambda\in (0,\lambda_0)$ admits a unique radial positive ground state to \eqref{PDE} which is nondegenerate in $H^s_{rad}(\RR)$.
\hfill$\square$

\appendix
\section{Useful identities}
Recall that for any $x,y>0$, Beta function can be defined by $$B(x,y)=\int_0^{\infty}\frac{t^{x-1}}{(1+t)^{x+y}}\,dt.$$
and it has the property $$B(x,y+1)=B(x,y)\frac{y}{x+y},\quad B(x+1,y)=B(x,y)\frac{x}{x+y}.$$
By a simple change of variable we can see that $$\int_0^{\infty}\frac{x^m}{(1+x^2)^{n}}\,dx=\frac12\int_0^{\infty}\frac{t^{\frac{m-1}2}}{(1+t)^n}\,dt=\frac12B\left(\frac{m+1}{2},n-\frac{m+1}{2}\right).$$
\begin{lemma}\label{App_lemma_1}
    Let $\psi(x)=\frac{1-|x|^2}{(1+|x|^2)^{\frac{N-2s+2}{2}}}$. For any $t\in[2,2_s^\ast]$, 
\begin{equation}
    \int_{\RR}\psi U_1^{t-1}\,dx=\frac1{c_{N,s}}\left(\frac{-2N+(N-2s)t}{(N-2s)t}\right)\int_{\RR}U_1^t\,dx,
\end{equation}
where $c_{N,s}$ is the constant appearing in the definition \eqref{Talenti} of $U_1$.
\end{lemma}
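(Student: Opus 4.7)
The plan is to evaluate the integral by plugging in the explicit formula for the Aubin--Talenti bubble $U_1$ and collapsing everything to a single one-parameter family of radial integrals computable via the Beta function.

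First, I would substitute $U_1(x)=c_{N,s}(1+|x|^2)^{-(N-2s)/2}$ into the left-hand side; the exponents add up as
\[
\tfrac{N-2s+2}{2}+\tfrac{(N-2s)(t-1)}{2}=\tfrac{(N-2s)t+2}{2}=a+1,\qquad a:=\tfrac{(N-2s)t}{2},
\]
so
\[
\psi(x)\,U_1^{t-1}(x)=c_{N,s}^{\,t-1}\,\frac{1-|x|^2}{(1+|x|^2)^{a+1}}.
\]
Then I would use the splitting $1-|x|^2=2-(1+|x|^2)$ to rewrite
\[
\int_{\RR}\psi\,U_1^{t-1}\,dx = c_{N,s}^{\,t-1}\bigl(2\,I_{a+1}-I_a\bigr),\qquad I_b:=\int_{\RR}\frac{dx}{(1+|x|^2)^b}.
\]

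Next I would evaluate $I_b$ by passing to polar coordinates and setting $u=r^2$, obtaining the Beta-function expression
\[
I_b=\frac{\omega_{N-1}}{2}\,B\!\left(\tfrac{N}{2},\,b-\tfrac{N}{2}\right),
\]
valid whenever $b>N/2$; in the range $t\in[2,2_s^\ast]$ used in the paper, $a>N/2$ holds, so $I_a$ and $I_{a+1}$ both converge. Applying the standard recursion $B(x,y+1)=B(x,y)\,\tfrac{y}{x+y}$ with $x=N/2$ and $y=a-N/2$ gives $I_{a+1}=\tfrac{a-N/2}{a}\,I_a$, hence
\[
2\,I_{a+1}-I_a=\left(\frac{2(a-N/2)}{a}-1\right)I_a=\frac{a-N}{a}\,I_a=\frac{(N-2s)t-2N}{(N-2s)t}\,I_a.
\]

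Finally, observing that $\int_{\RR}U_1^t\,dx=c_{N,s}^{\,t}\,I_a$, I would divide the previous identity by this quantity to conclude
\[
\int_{\RR}\psi\,U_1^{t-1}\,dx=\frac{1}{c_{N,s}}\,\frac{-2N+(N-2s)t}{(N-2s)t}\int_{\RR}U_1^t\,dx,
\]
which is exactly the claim. There is no real obstacle here: the lemma is a clean computation, and the only point requiring minor care is the bookkeeping of exponents that produces the factor $(a-N)/a$, together with the correct use of the Beta-function recursion to align the exponent in the $\psi U_1^{t-1}$ integral with the exponent in $U_1^t$.
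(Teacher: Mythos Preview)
Your proof is correct and follows essentially the same route as the paper: both reduce the integrals to Beta functions via polar coordinates and the substitution $u=r^2$, then apply the recursion $B(x,y+1)=\tfrac{y}{x+y}B(x,y)$ to compare with $\int U_1^t$. The only cosmetic difference is that the paper splits $1-r^2$ into two integrals with numerators $r^{N-1}$ and $r^{N+1}$ against the same denominator, whereas you write $1-|x|^2=2-(1+|x|^2)$ and shift the denominator exponent instead; these are trivially equivalent manipulations.
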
 
\begin{proof}
Using the Beta function we can see that
\begin{equation}\label{App_eq_1}
\int_{\RR}U_1^t\,dx=c_{N,s}^t\omega_N\int_0^{\infty}\frac{r^{N-1}}{(1+r^2)^{\frac{(N-2s)t}{2}}}\,dr=\frac{c_{N,s}^t\omega_N}{2}B\left(\frac N2,\frac{-N+(N-2s)t}{2}\right).
\end{equation}
Notice $\frac{-N+(N-2s)t}2>0\iff (N-2s)t>N$. Since $N>4s$ $\frac{-N+(N-2s)t}2>0$ for all $t\in[2,2_s^\ast]$. Similarly the LHS becomes
\begin{align}
    \int_{\RR}\psi U_1^{t-1}\,dx&=c_{N,s}^{t-1}\omega_N\int_0^{\infty}\frac{(1-r^2)r^{N-1}}{(1+r^2)^{1+\frac{(N-2s)t}{2}}}\,dr\nonumber\\
    &=c_{N,s}^{t-1}\omega_N\left[\int_0^{\infty}\frac{r^{N-1}}{(1+r^2)^{1+\frac{(N-2s)t}{2}}}\,dr-\int_0^{\infty}\frac{r^{N+1}}{(1+r^2)^{1+\frac{(N-2s)t}{2}}}\,dr\right]\nonumber\\
    &=\frac{c_{N,s}^{t-1}\omega_N}{2}\left[B\left(\frac N2,\frac{-N+(N-2s)t}{2}+1\right)-B\left(\frac N2+1,\frac{-N+(N-2s)t}{2}\right)\right]\nonumber\\
    &=\frac{c_{N,s}^{t-1}\omega_N}{2}\left(\frac{-2N+(N-2s)t}{(N-2s)t}\right)B\left(\frac N2,\frac{-N+(N-2s)t}{2}\right).\label{App_eq_2}
\end{align}
Using \eqref{App_eq_1} and \eqref{App_eq_2}, we get the lemma.
\end{proof}
 Let us define $u_t(x)=u(xt)$ and $w(x)=x\cdot\nabla u(x)$. It is easy to check that $w(x)=\lim_{t\to1^+}\frac{d}{dt}u_t(x)$.
\begin{lemma}\label{App_lemma_2}
    Suppose $u\in C^\infty(\RR)\cap L^{\infty}(\RR)\cap\H$. Then 
    \begin{equation}
        \fra w(x)=x\cdot\nabla(\fra u)(x)+2s\fra u(x), \quad x\in\RR.
    \end{equation}
\end{lemma}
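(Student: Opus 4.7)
The plan is to deduce the identity from the covariance of $\fra$ under dilations, followed by differentiation at $t = 1$.

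First, a change of variable $z = ty$ in the singular integral defining $\fra u_t$ yields the scaling identity
\begin{equation*}
\fra u_t(x) \;=\; t^{2s}\,(\fra u)(tx), \qquad t > 0,\; x \in \RR,
\end{equation*}
since the Jacobian $t^{-N}$ combines with the transformed kernel $t^{N+2s}|tx - z|^{-(N+2s)}$ to give a net factor of $t^{2s}$. The principal value is well-defined throughout because the hypothesis $u \in C^\infty(\RR) \cap L^\infty(\RR) \cap H^s(\RR)$ yields both $C^{1,\alpha}$-cancellation near the singularity and integrability at infinity.

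Next, I would differentiate both sides of this scaling identity at $t = 1$. The right-hand side is smooth in $t$ (with $\fra u \in C^1(\RR)$ a consequence of the regularity of $u$), and a direct product-and-chain-rule computation gives
\begin{equation*}
\frac{d}{dt}\Big|_{t=1}\!\big[\,t^{2s}(\fra u)(tx)\,\big] \;=\; 2s\,\fra u(x) \,+\, x\cdot \nabla(\fra u)(x).
\end{equation*}
Since $w(x) = \frac{d}{dt}|_{t=1} u_t(x) = x \cdot \nabla u(x)$, one expects the left-hand side derivative to equal $\fra w(x)$, after which equating the two derivatives gives the lemma.

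The main obstacle is justifying the interchange $\frac{d}{dt}\big|_{t=1} \fra u_t(x) = \fra w(x)$, i.e.\ differentiating under the principal value integral. I would write the difference quotient as
\begin{equation*}
\frac{\fra u_t(x) - \fra u(x)}{t - 1} \;=\; a_{N,s}\,\mathrm{P.V.}\!\int_{\RR}\frac{\Phi_t(x) - \Phi_t(y)}{|x - y|^{N + 2s}}\,dy, \qquad \Phi_t(y) := \frac{u(ty) - u(y)}{t - 1},
\end{equation*}
and observe that the mean value theorem yields $\Phi_t(y) = y \cdot \nabla u(\xi(t,y)\,y)$ for some $\xi(t,y)$ between $1$ and $t$, so $\Phi_t \to w$ pointwise as $t \to 1$. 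Passing the limit under the integral via dominated convergence requires a $t$-uniform dominating function: near $y = x$, a second-order Taylor expansion of $\Phi_t$ around $x$ (available since $u \in C^\infty$ makes $\Phi_t$ uniformly $C^2$ on compacts for $t$ close to $1$) bounds the integrand by $C|x-y|^{2 - N - 2s}$, integrable as $s < 1$; away from the diagonal, the mean-value bound $|\Phi_t(y)| \lesssim |y|\,\|\nabla u\|_{L^\infty}$ together with the decay of $u$ at infinity (inherited from $u \in L^\infty \cap H^s$ via standard radial/integrability estimates) dominates the integrand by an $L^1$ function of $y$. This domination step is the technically delicate part.
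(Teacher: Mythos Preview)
Your approach is essentially identical to the paper's: it too derives the identity by writing $w=\frac{d}{dt}\big|_{t=1}u_t$, invoking the scaling relation $\fra u_t(x)=t^{2s}(\fra u)(tx)$, interchanging $\fra$ with the $t$-derivative via the dominated convergence theorem, and then differentiating the right-hand side. If anything, you supply more detail on the DCT step than the paper does (it simply writes ``by DCT''); one small imprecision is your far-field domination, where you appeal to pointwise decay of $u$ ``inherited from $u\in L^\infty\cap H^s$ via standard radial/integrability estimates'' --- the stated hypotheses do not include radiality and do not by themselves force pointwise decay of $u$ or $\nabla u$, so that sentence would need tightening (or additional hypotheses) to be fully rigorous, but the paper's own proof does not address this point either.
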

\begin{proof}\phantom{\qedhere}
    \begin{align*}
        \fra w(x)&=\fra\left(\lim_{t\to1^+}\frac{d}{dt}u_t(x)\right)\\
        &=\lim_{t\to1^+}\frac{d}{dt}\fra u_t(x)\quad(\text{by DCT})\\
        &=\lim_{t\to1^+}\frac{d}{dt}\left(t^{2s}(\fra u)(xt)\right)\\
        &=\lim_{t\to1^+}\left[2st^{2s-1}\fra u(xt)+t^{2s}x\cdot\nabla(\fra u)(xt)\right]\\
        &= x\cdot\nabla(\fra u)(x)+2s\fra u(x).\tag*{\qed}
    \end{align*}
\end{proof}

\section*{Declaration} 
\noindent \textbf{Funding.}	 The research of M.~Bhakta is partially supported by DST Swarnajaynti fellowship (SB/SJF/2021-22/09). The research of P.~Das is partially supported by the NBHM grant 0203/5(38)/2024-R\&D-II/11224. The research of D.~Ganguly is partially supported by the SERB MATRICS (MTR/2023/000331).

\noindent \textbf{Competing interests.} The authors have no competing interests to declare that are relevant to the content of this article.\\
\noindent\textbf{Data availability statement.} Data sharing is not applicable to this article as no data sets were generated or analysed during the current study.
%%%%%%%%%%%%%%%
\bibliographystyle{abbrv}

\end{document}